\RequirePackage{xcolor}
\documentclass[journal,twoside,web]{ieeecolor}
\usepackage{generic}


\usepackage{etoolbox}
\newtoggle{arXiv-version}


\usepackage[ruled]{algorithm2e}
\usepackage[noend]{algpseudocode}

\usepackage[inline]{enumitem}
\usepackage{url}
\usepackage{tikz}
\usepackage{amsmath}
\usepackage{amssymb}
\usepackage{amsfonts}
\usepackage{nccmath}
\usepackage{xfrac}

\usepackage{amsthm}
\theoremstyle{plain}
\newtheorem{theorem}{Theorem}[section]
\newtheorem{lemma}[theorem]{Lemma}
\newtheorem{proposition}[theorem]{Proposition}
\newtheorem{corollary}[theorem]{Corollary}

\theoremstyle{definition}

\newtheorem{remark}{Remark}
\newtheorem{example}{Example}

\makeatletter
\renewenvironment{proof}[1][\proofname]{%
  \par\pushQED{\qed}\normalfont%
  \topsep6\p@\@plus6\p@\relax
  \trivlist\item[\hskip\labelsep\bfseries\itshape#1\@addpunct{.}]%
  \ignorespaces
}{%
  \popQED\endtrivlist\@endpefalse
}
\makeatother

\usepackage{mathtools}
\usepackage[margin=2cm]{geometry}
\usepackage{graphicx}
\graphicspath{ {./images/} }
\usepackage{subcaption}
\usepackage{float}
\usepackage[noadjust]{cite}
\usepackage{balance}
\usepackage[noabbrev]{cleveref}
\crefformat{equation}{(#1)}
\usepackage{bbm}
\usepackage{bm}
\usepackage{comment}
\usepackage{arcs}

\usepackage{resizegather}

\DeclareMathOperator{\lambdamax}
{\text{$\overline{\lambda}$}}
\DeclareMathOperator{\lambdamin}{\text{$\underline{\lambda}$}}
\DeclareMathOperator*{\argmin}{\arg\min}

\usepackage{acronym}
\newacro{lqr}[LQR]{Linear Quadratic Regulators}
\newacro{slqr}[SLQR]{Structured \ac{lqr}}
\newacro{olqr}[OLQR]{Output-feedback \ac{lqr}}
\newacro{lqg}[LQG]{Linear Quadratic Gaussian}
\newacro{dare}[DARE]{Discrete-time Algebraic Riccati Equation}
\newacro{ouralgo}[QRNPO]{Quasi Riemannian-Newton Policy Optimization}
\newacro{pg}[PGD]{Projected Gradient Descent}
\newacro{npg}[NPGD]{Natural Projected Gradient Descent}
\newacro{PO}[PO]{Policy Optimization}
\newacro{ode}[ODE]{Ordinary Differential Equations}

\usepackage{cite}
\usepackage{textcomp}
\def\BibTeX{{\rm B\kern-.05em{\sc i\kern-.025em b}\kern-.08em
		T\kern-.1667em\lower.7ex\hbox{E}\kern-.125emX}}
    
\usepackage{enumerate}

\newcommand\x{{\bm x}}
\newcommand\uu{{\bm u}}

\newcommand\y{{\bm y}}

\newcommand\bR{{\mathbb{R}}}
\DeclareMathOperator{\grad}{grad}
\DeclareMathOperator{\hess}{Hess}
\DeclareMathOperator{\euchess}{\text{$\overline\hess$}}
\DeclareMathOperator{\tproj}{\pi^\top}
\DeclareMathOperator{\nproj}{\pi^\perp}
\DeclareMathOperator{\eucproj}{Proj}
\newcommand\metric{g}
\newcommand\submetric{\protect\scalebox{0.9}{$\widetilde{\metric}$}}
\newcommand\lyap{{\,\mathbb{L}}}
\newcommand\lyapij{dY}
\DeclareMathOperator{\Exp}{exp}
\DeclareMathOperator{\diff}{\mathrm{d}}
\newcommand\nghbd{\mathcal{U}}
\newcommand\wein{\mathbb{W}}
\newcommand\ptrans{\mathcal{P}}
\newcommand\fX{\mathfrak{X}}
\newcommand\stabcer{s}

\newcommand\lyaptrace{{Lyapunov-trace~}}
\newcommand\Amatrices{{\text{M}(n\times n,\mathbb{R})}}
\newcommand\Astable{{\mathcal{M}}}
\newcommand\Bmatrices{{\text{M}(n\times m,\mathbb{R})}}
\newcommand\Cmatrices{{\text{M}(d\times n,\mathbb{R})}}
\newcommand\Lmatrices{{\text{M}(m\times d,\mathbb{R})}}
\newcommand\Kmatrices{{\text{M}(m\times n,\mathbb{R})}}
\newcommand\constraint{{\mathcal{K}}}
\newcommand\stableK{\mathcal{S}}
\newcommand\substableK{\protect\scalebox{0.9}{$\widetilde{\stableK}$}}

\newcommand\Enabla{\overline{\nabla}}
\newcommand\dist{\mathrm{dist}}
\newcommand{\tensor}[3]{\ensuremath{\left\langle #1, #2 \right \rangle}}

\makeatletter
\newcommand{\raisemath}[1]{\mathpalette{\raisem@th{#1}}}
\newcommand{\raisem@th}[3]{\raisebox{#1}{$#2#3$}}
\makeatother
\newcommand{\rnorm}[2]{\ensuremath{\left| #1 \right|_{\raisemath{1.5mm}{\metric_{\!_{#2}}}}}}

\newcommand{\tr}[1]{\ensuremath{\mathrm{tr}\left[ #1 \right]}}
\newcommand{\ie}{{i}.{e}.}
\newcommand{\eg}{{e}.{g}.}

\makeatletter
\newcommand{\algorithmfootnote}[2][\footnotesize]{%
  \let\old@algocf@finish\@algocf@finish
  \def\@algocf@finish{\old@algocf@finish
    \leavevmode\rlap{\begin{minipage}{\linewidth}
    #1#2
    \end{minipage}}%
  }%
}
\makeatother

\markboth{Published in \journalname, 2023 (first version: Jan, 2022)}
{Talebi \& Mesbahi: Policy Optimization over Submanifolds}

\begin{document}

\toggletrue{arXiv-version}

\title{Policy Optimization over Submanifolds for Linearly Constrained Feedback Synthesis}
\author{Shahriar Talebi, \IEEEmembership{Member, IEEE} and Mehran Mesbahi, \IEEEmembership{Fellow, IEEE}
\thanks{\iftoggle{arXiv-version}{~}{Manuscript received xx Month 20xx; revised xx Month 20xx; accepted xx Month 20xx. Date of publication xx Month 20xx; date of current version xx Month 20xx.} This work was supported by the AFOSR grant FA9550-20-1-0053 and NSF grant ECCS-2149470. \iftoggle{arXiv-version}{~}{Recommended by Associate Editor xxxxx.} (Corresponding author: Shahriar Talebi.)}
\thanks{The authors are with the William E. Boeing Department of Aeronautics and Astronautics, University of Washington, Seattle, WA 98115 USA. S. Talebi is also with the Department of Mathematics at the University of Washington  (e-mail: shahriar@uw.edu; mesbahi@uw.edu).}
\iftoggle{arXiv-version}{~}{
\thanks{Color versions of one or more figures in this article are available at https://doi.org/xxxxxx/TAC.20xx.xxxxx.}
\thanks{Digital Object Identifier xxxxxxx/TAC.20xx.xxxxxxx}}
}

\maketitle

\begin{abstract}
    In this paper, we study linearly constrained policy optimization over the manifold of Schur stabilizing controllers, equipped with a Riemannian metric that emerges naturally in the context of optimal control problems. We provide extrinsic analysis of a generic constrained smooth cost function, that subsequently facilitates subsuming any such constrained problem into this framework. By studying the second order geometry of this manifold, we provide a Newton-type algorithm that does not rely on the exponential mapping nor a retraction, while ensuring local convergence guarantees. 
    The algorithm hinges instead upon the developed stability certificate and the linear structure of the constraints. We then apply our methodology to two well-known constrained optimal control problems.
    Finally, several numerical examples showcase the performance of the proposed algorithm.
\end{abstract}

\begin{IEEEkeywords}
	    \textit{Constrained stabilizing controllers, Optimization over submanifolds, Output-feedback LQR control, Structured LQR control}
\end{IEEEkeywords}
 
\section{Introduction}
\label{sec:intro}
\IEEEPARstart{I}{n} recent years, direct \ac{PO} for different variants of the \ac{lqr} problems have attracted considerable attention in the literature.
In the meantime, \ac{PO} for linearly constrained \ac{lqr}, \eg, state-feedback \ac{slqr} and \ac{olqr}, has been less explored due to the intricate geometry of the respective feasible sets and the non-convexity of the cost function.
While reparameterization of \ac{lqr} to a convex setup is possible for unconstrained cases \cite{mohammadi2021convergence}, in general, trivial constraints directly on the policy become nontrivial and non-convex after such reparameterizations.\footnote{There are exceptions to this statement-like when conditions such as quadratic invariance can be invoked~\cite{rotkowitz2006characterization}.}
Furthermore, the domain of the optimization problems 
for constrained \ac{lqr} (and its variants) are generally not only non-convex \cite{Ackermann1980Parameter} but also disconnected~\cite{Feng2019Exponential}.
As such, there are no guarantees that 
first-order stationary points are necessarily local minima.

Finding the linear output-feedback policy directly for the \ac{olqr} problem was first addressed in \cite{Levine1970determination},
a procedure that involves solving nonlinear matrix equations at each iteration. 
Since then, there has been on-going research efforts to address
this problem adopting distinct perspectives~\cite{anderson1973linear, moerder1985convergence, toivonen1985globally, Makila1987Computational, Toivonen1987Newton, iwasaki1994linear, rautert1997computational, maartensson2009gradient},
{including its computational complexity~\cite{syrmos1997static,blondel1997np,papadimitriou1986intractable}. 
In this direction, first and second order methods have been adopted for solving \ac{slqr} and \ac{olqr} problems (see \eg, \cite{Toivonen1987Newton,Makila1987Computational} and references therein).}
However, these methods often have a number of limitations, including reliance on backtracking line-search techniques at each iteration~{(which may be computationally expensive or infeasible)}, absence of convergence guarantees,
not utilizing the inherent non-Euclidean geometry of the problem, and finally, not offering a setup for handling general linear constraints on the feedback gain.

Recently, state-feedback \ac{lqr} problems have been studied through the lens of first order methods, in both discrete-time \cite{bu2019lqr} and continuous-time \cite{bu2020policy} setups. 
This point of view was initiated when the \ac{lqr} cost was shown to satisfy Polyak-Łojasiewicz (PL) (aka \emph{gradient dominance}) property \cite{fazel2018global}, facilitating a global convergence guarantee of first order methods for this problem--despite its non-convexity. 
Since then, \ac{PO} using first order methods has been investigated for variants of \ac{lqr} problem, such as \ac{olqr} \cite{fatkhullin2020optimizing}, model-free setup \cite{mohammadi2021linear}, and risk-constrained \ac{lqr} \cite{zhao2021global}.
The gradient dominance property, however, is only known to be valid with respect to the global optimum of the unconstrained case, and is not necessarily expected for general constrained \ac{lqr} problems. 
By merely using the first order information of the cost function, \ac{pg} techniques--whenever the projection is possible--can be shown to converge to first order stationary points but with a sublinear convergence rate (\eg, see \cite{bu2019lqr} and \cite{fatkhullin2020optimizing} for \ac{slqr} and \ac{olqr} problems, respectively). 
A sublinear rate is generally unfavorable from a practical point of view, particularly when second order information of the \ac{lqr} cost can be 
utilized.
{Despite computational challenges arising from the non-convexity--and even non-connectedness--of stabilizing feedback gains for general constrained optimal control problems, one may consider developing fast convergent algorithms for efficient identification of feasible local optima.}

Note that {\em some} structure on the policy can be enforced through regularization; however, this approach merely \emph{promotes} structural
constraints and does not address problems considered here, as
constraints are prescribed as a hard requirement for feasibility of the solution (\eg, see \cite{park2020structured} for an approach pertaining to promoting sparsity for \ac{slqr} problem).
Here, we aim to utilize the second order information of the cost function to improve the convergence rate, and at the same time, 
provide a general approach that can address \emph{other} linear constraints
from a unifying perspective; in essence, the proposed approach can
be adopted for problems such as~\ac{slqr} and \ac{olqr},
well-recognizing their inherent computational complexity~\cite{syrmos1997static,blondel1997np}.

By ignoring the geometry of the problem, one may aim to optimize the \emph{linearly} constrained \ac{lqr} cost by directly utilizing first or second order methods. Since the domain is non-convex, this might still be possible{--depending on the problem scenario--}by incorporating an Armijo-type backtracking line-search or requiring that the initial guess be close to the local optima. 
Here, we preclude from incorporating a line-search in order to systematically exploit the geometry inherent in the \ac{lqr} cost. 
{This inherent geometry will be precisely captured by the Riemannian metric defined subsequently in \S\ref{sec:results} (see \Cref{lem:tensor}).}
Incorporating a back-tracking technique is then an immediate extension
of our setup.
Furthermore, by adopting a geometric perspective
in designing direct policy optimization,
we aim to also pave the way for future works on including relevant system theoretic criteria that lead to \emph{nonlinear} constraints on the feedback policy; handling such constraints can significantly
benefit from the intrinsic geometry of the problem as investigated in this work (see \S\ref{sec:conclusion} for an example).
\begin{figure}[t]
     \centering
     \includegraphics[trim = 2 2 0 0, clip, width =0.47\textwidth]{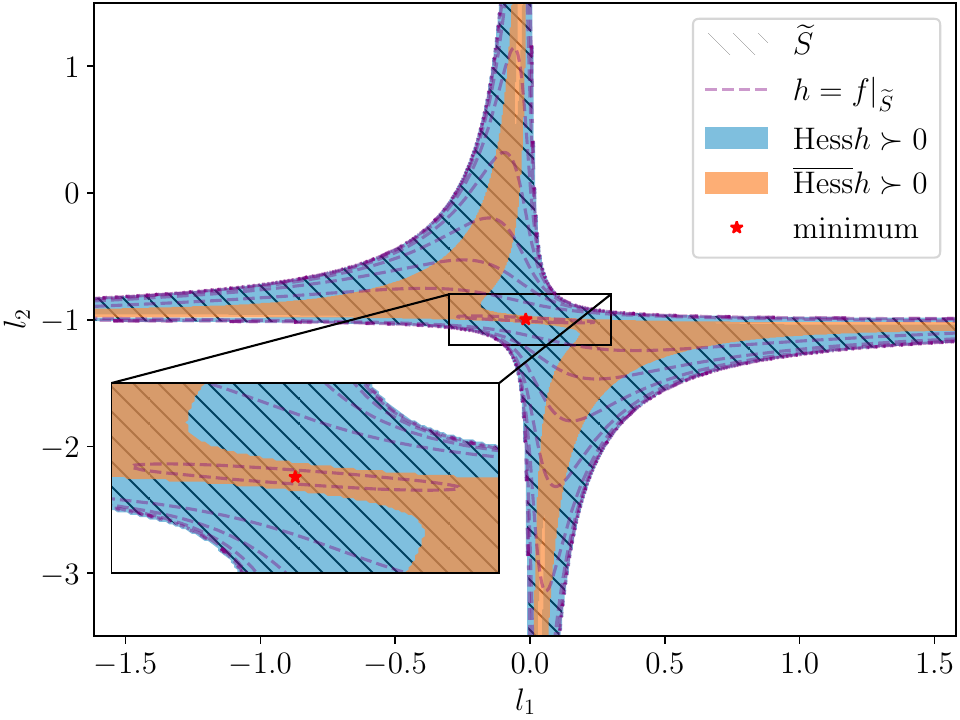}
     \caption{\small
    The submanifold $\substableK$ of  \emph{diagonal} state-feedback stabilizing controllers ($K = \mathrm{diag}(l_1,l_2)$) for a $2 \times 2$ system; superimposed with the level-sets of the constrained \ac{lqr} cost ($h$), and the regions on which its \emph{Hessian} is positive definite with respect to the inherent Riemannian geometry ($\hess{h}$) and the Euclidean geometry ($\euchess{h}$), respectively. Note that $\substableK$ is covered by the region on which $\hess{h} \succ 0$. See \Cref{exp:2} for more details.}
    \label{fig:motivation}
    \vspace{-0.3cm} 
\end{figure}
Generally, the second order behavior of the cost function can be utilized in order to obtain a descent direction--as in the Newton method--as long as the \emph{Hessian} stays positive definite. 
However, this second order information can be obtained in a variety of ways; 
for example, with respect to the usual Euclidean geometry (especially for linearly constrained problems) or more interestingly, with respect to the non-Euclidean geometry inherent to the cost function itself. 
A simple--yet relevant--example of the above statement is depicted in \Cref{fig:motivation} in relation to the set of \emph{diagonally} constrained stabilizing controllers (denoted by $\substableK$)--which turns out to be non-convex even for this simple example consisting of two inputs and two states. 
More specifically, this set is the intersection of the 4-dimensional set of stabilizing controllers with a 2-dimensional plane defining the diagonal constraint. In reference to
\Cref{fig:motivation}, note how the \emph{Euclidean Hessian} of the constrained \ac{lqr} cost (denoted by $\euchess{h}$) is positive definite on a smaller subset of $\substableK$--especially in the vicinity of the ``$\mathrm{minimum}$.'' Therefore, one would expect that the neighborhood of $\mathrm{minimum}$ on which the Newton updates using the Euclidean geometry converge, be relatively small. In the meantime, 
if the second order behavior of the \ac{lqr} cost is considered through the lens of Riemannian geometry, its \emph{Riemannian Hessian} (denoted by $\hess{h}$) captures the behavior of the cost function more effectively, in that it remains positive definite on a larger domain--compared with $\euchess{h}$. Hence, one expects a significant difference in the performance of second order optimization algorithms utilizing these two distinct geometries.
{To further expand on this example, one needs to define 
the (Riemannian) $\hess{h}$ and devise a second order algorithm that iteratively optimize this constrained problem in the vicinity of a local optimum. This machinery is developed in the rest of this paper, after which we revisit the above example (see \Cref{exp:2} in \S\ref{sec:simulation}).}

The literature on optimization over manifolds often relies
on having access to either the exponential mapping \cite{gabay1982minimizing, smith1994optimization}, or a \emph{retraction}\iftoggle{arXiv-version}{\footnote{A better terminology would be \emph{graph projection}; however, we adopt \emph{retraction} to be consistent with the manifold optimization literature.}}\, from its tangent bundle onto the manifold itself \cite{absil2009optimization}.
However, due to the intricate geometry of the manifold of Schur stabilizing controllers, the exponential mapping is computationally 
expensive and a suitable retraction 
is generally not available.
Furthermore, many useful constraints for optimal \ac{lqr} problems (such as \ac{olqr} or \ac{slqr}) inherit a linear structure. Hence, one may consider the Natural Gradient Descent approach of \cite{amari1998natural, amari1998why} in order to utilize this inherent geometry; however, this approach is not directly applicable to more involved submanifolds of stabilizing controllers.
Nonetheless, it is pertinent to ask whether we can still exploit the intrinsic non-Euclidean geometry--induced by, say, the quadratic cost and linear dynamics, in optimal control problems as illustrated in \Cref{fig:motivation}-- by circumventing the absence of a computationally feasible retraction while guaranteeing stability.

In this paper, we consider a general optimization problem over the set of linearly constrained stabilizing feedback gains; this setup can easily be tailored to other classes of constrained control synthesis problems.
We introduce a Newton-type algorithm that utilizes both the inherent Riemannian geometry as well as the linear structure of the constraints, and provide its convergence analysis to the local minima. 
{ Here, in the absence of a computationally feasible (global) retraction from the tangent bundle to the manifold, we obtain the so-called \emph{stability certificate} that--together with the linear structure of the constraints--substitute the
role that a retraction would generally play, 
ensuring the feasibility of the next iterate.} Finally, as the unit stepsize for the proposed iterates may not be possible in general, we guarantee a linear convergence rate--that eventually becomes quadratic as the iterates converge.
Finally, we provide applications of the proposed methodology to the well-known state-feedback \ac{slqr} and \ac{olqr} problems, followed by numerical examples.

Our contributions can thus be summarized as follows:
\begin{enumerate*}[label=(\roman*)]
    \item We study the second order geometry of the manifold of stabilizing controllers induced by a pertinent Riemannian metric and its associated \emph{connection} {(a generalization of directional derivatives \cite{lee2018introduction}), in order to obtain the second order information of a cost through defining the Riemannian Hessian}.
    \item We provide extrinsic analysis for first and second-order behavior of a generic smooth cost function constrained to a \emph{Riemannian submanifold}. This, in turn, allows for a general treatment of constrained optimization problems on the manifold of stabilizing controllers.
    \item We introduce \ac{ouralgo} algorithm (pronounced as \emph{Kern PO}) with convergence guarantees that exploits the inherent Riemannian geometry in the absence of the exponential mapping or a retraction, effectively providing a stability certificate for linearly constrained feedback gains.
    \item We apply our methodology to \ac{slqr} and \ac{olqr} problems by first, computing the second order behavior of the \ac{lqr} cost with respect to the Riemannian connection, and then, explicating the solution to Newton equation for each case using this geometry.
    \item While our approach allows for considering any choice of connection, here, we focus on the associated Riemannian connection--we also make a comparison to the ordinary Euclidean connection.
    \item Finally, we provide several numerical examples to showcase the performance and advantages of the proposed methodology that exploits the intrinsic geometry of constrained feedback stabilization.
\end{enumerate*}
Further applications of the proposed algorithm--\eg~to the problem of controlling diffusion dynamics over a network--has recently appeared in \cite{talebi2022geometric}.

The rest of the paper is organized as follows. 
In \S\ref{sec:probSetup}, we introduce the generic 
(stabilizing) feedback synthesis problem.
We then provide the analysis of this problem through the lens of differential geometry in \S\ref{sec:results}.
In \S\ref{sec:learning-algo}, we present the algorithm and its convergence analysis for optimization on submanifolds of stabilizing controllers.
Applications to \ac{slqr} and \ac{olqr} problems are then presented in \S\ref{sec:application}.
Finally, numerical examples are provided in \S\ref{sec:simulation}, followed by concluding remarks in \S\ref{sec:conclusion}.
The appendix contains proofs of the results.\\
\indent\textbf{Notation:} The space of $m\times n$ matrices over the reals is denoted by $\Kmatrices$ with the trivial smooth structure determined by the atlas consisting of the single chart $(\Kmatrices, \mathrm{vec})$, where $\mathrm{vec}\colon \Kmatrices \mapsto\bR^{mn}$ denotes the operator that returns a vector obtained by (vertically) stacking the columns of a matrix--from left to right. We denote the transpose operator and the spectral norm of a matrix by $(\cdot)^\intercal$ and $\|\cdot\|_2$, respectively. The trace and spectral radius of a square matrix are denoted by $\tr{\cdot}$ and $\rho(\cdot)$. The \textit{Loewner} partial order of symmetric positive (semi-)definite matrices is denoted by $\succ$ ($\succcurlyeq$);
we use the same notation to denote positive (semi-)definiteness of 2-tensor fields.
The maximum and minimum eigenvalues of symmetric matrices will be designated by $\lambdamax$ and $\lambdamin$, respectively.
The set of positive integers less than or equal to $m$ is denoted by $[m]$.
By $\Astable \coloneqq \left\{ A \in \Amatrices \;|\; \rho(A) <1 \right\}$, we denote the set of (Schur) stable matrices, and define the \emph{Lyapunov map}
\(\textstyle \lyap \colon \Astable \times \Amatrices \mapsto\Amatrices,\)
that sends the pair $(A,Z)$ to the unique solution $X$ of 
\begin{equation}\label{eq:lyap-gen}
X = A X A^\intercal + Z,
\end{equation}
which has the representation $\textstyle X = \sum_{i=0}^\infty A^i Z (A^\intercal)^i$;
in this case, if $Z \succeq 0\, (\succ 0)$, then $X \succeq 0 \, (\succ 0)$. Furthermore, when $Z \succeq 0$, then $X \succ 0$ if and only if $(A,Z^{1/2})$ is controllable (see \cite{gajic2008lyapunov} and references therein).
For manifolds we follow the notation and results in \cite{lee2018introduction} and \cite{lee2013smooth} unless stated explicitly.

\section{Problem Statement}
\label{sec:probSetup}
Given a stabilizable pair $(A,B)$ with $A \in \Amatrices$ and $B\in \Bmatrices$, we define
\[ \stableK \coloneqq \{K \in \Kmatrices \;|\; \rho(A+BK) < 1\},\]
as the set of stabilizing feedback gains.
Subsequently, we will introduce a non-Euclidean geometry over $\stableK$ using a metric arising naturally in the context of optimal control problems.
We are often interested in feedback gains $K$ that lie in a relatively \emph{simple} subset $\constraint$ of $\Kmatrices$, such that $\substableK \coloneqq \constraint \cap \stableK$ is an embedded submanifold of $\stableK$ {(see \S\ref{sec:domain-manifold})}. A common example of this would
be a linear subspace of $\Kmatrices$ characterizing a prescribed sparsity pattern for the admissible controller gains (see \S\ref{subsec:slqr}). Another example is the optimal output-feedback synthesis considered in \S\ref{subsec:olqr}.

Herein, we are concerned with the optimization problem,
\begin{align}\label{eq:constained-opt-gen}
   \displaystyle \min_K f(K)  \qquad 
    \text{s.t. } \quad K \in \substableK,
\end{align}
where $f \in {C^{\infty}(\stableK, \mathbb{R}) = C^{\infty}(\stableK)}$ and $\substableK$ is an embedded submanifold of $\stableK$, especially when it is endowed with a linear structure.
This problem is motivated by parameterized feedback synthesis problems where we optimize the \ac{lqr} cost $f$ directly for \emph{the policy} $K \in \substableK$ which takes the form of $f(K) = \frac{1}{2}\tr{P_K \Sigma_K}$ with some $\Sigma_K, P_K \succ 0$ smoothly depending on $K$ (see \S\ref{sec:application} for further details).

Our approach involves using this linear structure with an appropriate Riemannian geometry of $\stableK$ to circumvent the absence of a (computationally feasible) global retraction from $T\stableK$ onto $\stableK$ (or $\substableK$)--due to the intricate geometry of $\stableK$.
{In this paper, we do not explicitly discuss conditions for
the existence of the local (or global) optima for \cref{eq:constained-opt-gen};
as such, we assume that the minimum exists;}
{ see \cite{skogestad2007multivariable,mesbahi2010graph} for a few relevant applications.}

In order to handle a generic embedded submanifold $\substableK$, we study the behavior of the restricted function $h \coloneqq f|_{\footnotesize\substableK}$ from an extrinsic point of view, an approach that
can be generalized to any such submanifold.
We note that in general, the function $f$ is not convex and the constraint submanifold $\substableK$ might be disconnected. {Thus, here we focus on local convergence results that aim to exploit the inherent geometry of the problem in order to achieve fast convergence rates--with a relatively reasonable computational complexity.}
%

\section{Geometry of the Synthesis Problem}
\label{sec:results}
In order to examine \cref{eq:constained-opt-gen}, we analyze the domain manifold using machinery borrowed from differential geometry. 
Note that embedded submanifolds $\substableK$ endowed with a linear structure can certainly be investigated without using such a machinery. However, neither the corresponding results can be generalized to submanifolds with \emph{nonlinear} structures, nor the geometry induced by the cost function can be exploited for developing the corresponding optimization algorithms.

Before we proceed, it is worth noting that if we were to directly apply the results developed for optimization over the manifolds (such as \cite{absil2009optimization}), it would have been necessary to access a retraction from the tangent bundle $T\substableK$ onto $\substableK$. Unfortunately, due to the intricate geometry of $\stableK$, such a mapping is generally not available. Additionally, we will see that the Riemannian exponential map, with respect to the inherent geometry associated with optimal control problems, involves a system of ordinary differential equations whose coefficients are solutions to different Lyapunov equations. Therefore, even though it is possible to compute the exponential mapping, in general, its computational overhead is hard to justify.
Nonetheless, we show how we can circumvent this issue when the Riemannian tangential projection onto $T \substableK$ is available--an operation that is more streamlined.
\subsection{Analysis of the domain manifold} \label{sec:domain-manifold}
It is known that $\stableK$ is contractible \cite{bu2019topological}, and unbounded when $m \geq 2$ with the topological boundary $\partial \stableK = \{K \in \Kmatrices \;|\; \rho(A+BK) =1\}$ as a subset of $\Kmatrices$. Furthermore, $\stableK$ is open in $\Kmatrices$ {(by continuity of eigenvalues in the entries of the matrix \cite[Theorem 5.2]{serre2010matrices} and passing to the quotient \cite[Theorem 3.73]{lee2010introduction}});
as such $\stableK$ is a submanifold without boundary.\footnote{Cf. \cite{Ohara1992differential} for an analogous study of Hurwitz stabilizing controllers.}

In this paper we focus on $\stableK$ as a manifold on its own. Note that $\stableK$ can be covered by a single smooth chart and the tangent bundle of $\stableK$, denoted by $T \stableK$, is diffeomorphic to $\stableK \times \bR^{mn}$, which in turn is diffeomorphic to $\stableK \times \Kmatrices$ under the map $\mathrm{Id}_{\stableK} \times \mathrm{vec}^{-1}$. 
We refer to this composition of diffeomorphisms as \emph{the usual identification of the tangent bundle} (or $T_K S \cong \Kmatrices$ at any point $K \in \stableK$) if we need to identify any element of $T \stableK$ (or $T_K \stableK$). 
In particular, let us denote the coordinates of this global chart by $(x^{i,j})$ for $\stableK$, its associated global coordinate frame by $(\frac{\partial}{\partial x^{i,j}})$ or simply $(\partial_{i,j})$, and its dual coframe by $(d x^{i,j})$, where $i=1,\dots, m$ and $j=1,\dots, n$. Moreover, the $(k,\ell)$th element of any matrix $A \in \Kmatrices$ is denoted by $[A]_{k,\ell}$ or $[A]^{k,\ell}$ depending on viewing $A$ as a point or a tangent vector, respectively. Then, for example, under the usual identification of tangent bundle, for any fixed $i$ and $j$, we identify $\partial_{i,j}$ as a matrix in $\Kmatrices$ whose elements are $[\partial_{i,j}]^{k,\ell} = 1$ if $k = i$ and $\ell = j$, and otherwise $[\partial_{i,j}]^{k,\ell} = 0$.
We also use the Einstein summation convention as explained in \cite{lee2018introduction} for double indices; for example, we write $x^{i,j}\partial_{i,j}$ to denote $\sum_{i=1}^m \sum_{j=1}^n x^{i,j}\partial_{i,j}$.%
\footnote{Note that $\mathrm{vec}(\cdot)$ does not preserve algebraic operations on its matrix inputs, \eg, $\mathrm{vec}(AB)$ is not a simple function of $\mathrm{vec}(A)$ and $\mathrm{vec}(B)$; as such, we use double indices to maintain the \textit{matrix} structure of points on $\stableK$.}
{A vector field $V$ on $\stableK$ is a smooth map $V\colon \stableK \mapsto T\stableK$, usually written as $K \mapsto V_K$, with the property that $V_K \in T_K \stableK$ for all $K \in \stableK$. A covariant 2-tensor field is a smooth real-valued multilinear function of 2 vector fields.
We denote the set of all vector fields over $\stableK$ by $\fX(\stableK)$, and the bundle of covariant 2-tensor fields on $\stableK$ by $T^2(T^*\stableK)$.}
Finally, for any general mapping $P\colon\stableK \mapsto \star$, we use $P_K, P|_K$ or $P(K)$ to denote the element in $\star$ that $K \in \stableK$ has been mapped to.
The following is a frequently used technical lemma.
\begin{lemma}\label[lemma]{lem:dlyap}
The subset $\Astable$ is an open submanifold of $\Amatrices$, the Lyapunov map $\lyap\colon \Astable \times \Amatrices \mapsto \Amatrices$ is smooth, and its differential acts as 
\begin{gather*}
    \diff \lyap_{(A,Q)}[E,F] =
    \lyap \big(A, E \lyap(A,Q) A^\intercal + A \lyap(A,Q) E^\intercal + F \big)
\end{gather*}
on any $(E,F) \in T_{(A,Q)} (\Astable \times \Amatrices)$ with the identification that follows by $T_{(A,Q)} (\Astable \times \Amatrices) \cong T_A \Astable \oplus T_Q \Amatrices \cong \Amatrices \oplus \Amatrices$.
Furthermore, for any $A \in \Astable$ and $Q, \Sigma \in \Amatrices$ we have, the so-called \emph{\lyaptrace} property,
\[\tr{\lyap(A^\intercal,Q) \Sigma} = \tr{\lyap(A, \Sigma) Q}.\]
\end{lemma}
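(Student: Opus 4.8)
The plan is to handle the three assertions in turn, with the smoothness claim as the analytic core. For openness, I would note that the spectral radius $\rho:\Amatrices \to \bR$ is continuous (it is the maximum modulus of the roots of the characteristic polynomial, whose coefficients depend polynomially on the matrix entries), so $\Astable = \rho^{-1}\big([0,1)\big)$ is the preimage of an open set and hence open in $\Amatrices$. An open subset of a smooth manifold is an embedded open submanifold of the same dimension, which settles the first claim.

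For smoothness I would invoke the implicit function theorem. Define the smooth (indeed polynomial) map $\Phi:\Astable \times \Amatrices \times \Amatrices \to \Amatrices$ by $\Phi(A,Z,X) \coloneqq X - AXA^\intercal - Z$, so that $X = \lyap(A,Z)$ is exactly the solution of $\Phi(A,Z,X)=0$. The partial differential of $\Phi$ with respect to $X$ is the linear Stein operator $\mathcal{L}_A: H \mapsto H - AHA^\intercal$, which under vectorization is represented by $I - (A \otimes A)$. Since the eigenvalues of $A \otimes A$ are the products $\lambda_i \lambda_j$ of eigenvalues of $A$, the operator $\mathcal{L}_A$ has eigenvalues $1 - \lambda_i \lambda_j$; because $A \in \Astable$ forces $|\lambda_i \lambda_j| < 1$, none of these vanish and $\mathcal{L}_A$ is invertible. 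The implicit function theorem then gives that $(A,Z) \mapsto \lyap(A,Z)$ is smooth on $\Astable \times \Amatrices$. I expect this invertibility of $\mathcal{L}_A$ over $\Astable$ to be the main technical point, since it is precisely where Schur stability is used.

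With smoothness established, the differential formula follows by implicit differentiation of the defining relation. Writing $X \coloneqq \lyap(A,Q)$ and differentiating $X = AXA^\intercal + Q$ along a curve through $(A,Q)$ with velocity $(E,F)$ gives $\dot X = E X A^\intercal + A \dot X A^\intercal + A X E^\intercal + F$, that is, $\mathcal{L}_A[\dot X] = E X A^\intercal + A X E^\intercal + F$. Since $\mathcal{L}_A$ is exactly the operator inverted by $\lyap(A,\cdot)$, solving for $\dot X$ recovers the stated expression; note that the right-hand argument lies in $\Amatrices$ and $A \in \Astable$, so $\lyap(A,\cdot)$ is well defined there.

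Finally, for the \lyaptrace property I would pass to the series representation $\lyap(A,Z) = \sum_{i=0}^\infty A^i Z (A^\intercal)^i$, which converges absolutely since $\rho(A) < 1$ makes the terms decay geometrically, thereby justifying termwise manipulation under the trace. Then $\tr{\lyap(A^\intercal,Q)\Sigma} = \sum_{i} \tr{(A^i)^\intercal Q A^i \Sigma}$ while $\tr{\lyap(A,\Sigma)Q} = \sum_{i}\tr{A^i \Sigma (A^i)^\intercal Q}$; setting $B = A^i$, the cyclic invariance of the trace yields $\tr{B^\intercal Q B \Sigma} = \tr{B \Sigma B^\intercal Q}$ term by term, and summing gives the identity. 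This last part is routine once absolute convergence secures the interchange of sum and trace.
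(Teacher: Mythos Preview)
Your argument is correct. The openness step and the \lyaptrace identity match the paper's proof essentially verbatim, but your treatment of smoothness and the differential takes a genuinely different route. The paper argues smoothness directly from the series representation $\lyap(A,Z)=\sum_{i\geq 0} A^i Z (A^\intercal)^i$: each entry of the solution is a locally convergent power series in the entries of $(A,Z)$, hence real analytic and in particular smooth (a footnote also mentions the vectorized closed form $(I-A\otimes A)^{-1}\mathrm{vec}(Z)$ as an alternative, which is closer in spirit to your approach). For the differential, the paper splits along the product structure, uses linearity of $\lyap$ in the second slot, and computes the first-slot contribution via the curve $t\mapsto (A+tE,Q)$ with an explicit first-order expansion of $X_t-X$. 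Your implicit-function-theorem argument is cleaner: once you exhibit invertibility of the Stein operator $\mathcal{L}_A$ via the spectrum of $A\otimes A$, smoothness and the differential formula fall out simultaneously from implicit differentiation of $X=AXA^\intercal+Q$, without any curve or series bookkeeping. The trade-off is that the paper's route yields the slightly stronger conclusion of real analyticity, whereas yours gives $C^\infty$; for the purposes of the lemma and its downstream uses this distinction is immaterial.
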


Next, we note (see  \cref{eq:constained-lqr-opt2} in \S\ref{sec:application}) that many optimal control problems such as \ac{slqr}, \ac{olqr} and even \ac{lqg} share a similar cost structure, as
\(\textstyle f(K) = \frac{1}{2} \tr{P_K \Sigma_K},\)
where mappings $P, \Sigma\colon \stableK \mapsto \Amatrices$ send $K$ to
\begin{equation}
    P_K \coloneqq \lyap(A_{\mathrm{cl}}^\intercal, Q + K^\intercal R K),\quad
    \Sigma_K \coloneqq \Sigma_1 + K^\intercal \Sigma_2 K, \label{eq:sigmaK}
\end{equation}
respectively, with the closed-loop system $A_{\mathrm{cl}} \coloneqq A + B K$, and $\Sigma_1,\Sigma_2 \succeq 0$ as prescribed matrices with appropriate dimensions.
By the \lyaptrace property, the cost can be recast as $f(K) = \frac{1}{2} \tr{(Q + K^\intercal R K)\lyap(A_{\mathrm{cl}}, \Sigma_K)}$.
Motivated by this, we define a covariant 2-tensor field on $\stableK$ which will subsequently be proved to be a Riemannain metric. 
Riemannian metrics have been used in the literature to efficiently capture the geometry of the problem; \eg,~see \cite{absil2004cubically} for optimizing the Rayleigh quotient on the Grassmann manifold, and \cite{muller2023geometry} for natural policy gradient on Markov decision processes. Note that our metric is different from the ``Hessian metric'' induced by, say, a convex function \cite{alvarez2004hessian}.

\begin{lemma}\label[lemma]{lem:tensor}
Let $\tensor{\cdot}{\cdot}{Y} \colon \fX(\stableK) \times \fX(\stableK) \mapsto C^\infty(\stableK)$ denote the mapping that, under the usual identification of the tangent bundle, for any $V, W \in \fX(\stableK)$ sets,\footnote{The notation $\tensor{\cdot}{\cdot}{Y}$ should not be confused with the (ordinary) inner product in inner-product spaces as it is varying over $\stableK$.}
\begin{equation*}
    \tensor{V}{W}{Y}\big|_K \coloneqq \tr{(V_K)^\intercal \; W_K \; \lyap(A_{\mathrm{cl}},\Sigma_K)}, \quad \forall K \in \stableK.
\end{equation*}
Then this map, induced by a smooth symmetric covariant 2-tensor field,
is well-defined.
\end{lemma}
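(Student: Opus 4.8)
The plan is to verify the three assertions of the statement in turn: that $\tensor{V}{W}{Y}$ is, pointwise, a well-defined real number and, as a function of the base point, smooth (so the map genuinely lands in $C^\infty(\stableK)$); that the resulting assignment is $C^\infty(\stableK)$-bilinear, hence induced by a covariant $2$-tensor field; and that this tensor field is symmetric. Since the lemma only claims a \emph{smooth symmetric covariant $2$-tensor field}, positive-definiteness (which would upgrade it to a Riemannian metric) is deliberately set aside here.

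First I would establish well-definedness. For any $K \in \stableK$ the matrix $A_{\mathrm{cl}} = A + BK$ is Schur stable, so $A_{\mathrm{cl}} \in \Astable$ and $\lyap(A_{\mathrm{cl}}, \Sigma_K)$ is defined by \Cref{lem:dlyap}; moreover $\Sigma_K = \Sigma_1 + K^\intercal \Sigma_2 K \succeq 0$ since $\Sigma_1, \Sigma_2 \succeq 0$, so $\lyap(A_{\mathrm{cl}}, \Sigma_K) \succeq 0$ and the trace is a finite real number. For smoothness, I would note that $K \mapsto (A_{\mathrm{cl}}, \Sigma_K)$ is smooth (affine and polynomial in $K$, respectively), that $\lyap$ is smooth by \Cref{lem:dlyap}, and that for $V, W \in \fX(\stableK)$ the matrix representatives $K \mapsto V_K, W_K$ under the usual identification are smooth. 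As matrix multiplication, transposition, and the trace are smooth, the composite $K \mapsto \tr{V_K^\intercal W_K \lyap(A_{\mathrm{cl}}, \Sigma_K)}$ is smooth, so $\tensor{V}{W}{Y} \in C^\infty(\stableK)$.

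Next I would prove tensoriality. The key observation is that the sandwiched matrix $\lyap(A_{\mathrm{cl}}, \Sigma_K)$ depends only on the base point $K$ and not on the vector-field arguments. Consequently, for fixed $K$ the quantity $\tr{V_K^\intercal W_K \lyap(A_{\mathrm{cl}}, \Sigma_K)}$ is $\mathbb{R}$-bilinear in $(V_K, W_K)$, and for any $\phi \in C^\infty(\stableK)$ the scalar $\phi(K)$ factors out of the trace, so that $\tensor{\phi V}{W}{Y} = \phi \tensor{V}{W}{Y}$, and likewise in the second slot. Thus the map is $C^\infty(\stableK)$-bilinear, and by the tensor characterization lemma (see \cite{lee2013smooth}) it is induced by a unique smooth covariant $2$-tensor field $g$ with $\tensor{V}{W}{Y}|_K = g_K(V_K, W_K)$. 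Finally, for symmetry I would write $X \coloneqq \lyap(A_{\mathrm{cl}}, \Sigma_K)$, observe that $X$ is symmetric because $\Sigma_K$ is symmetric and $X = \sum_{i \geq 0} A_{\mathrm{cl}}^i \Sigma_K (A_{\mathrm{cl}}^\intercal)^i$, and then use $\tr{M} = \tr{M^\intercal}$, the identity $X = X^\intercal$, and cyclicity to get $\tr{V_K^\intercal W_K X} = \tr{X W_K^\intercal V_K} = \tr{W_K^\intercal V_K X}$, i.e. $\tensor{V}{W}{Y}|_K = \tensor{W}{V}{Y}|_K$ for every $K$.

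I expect the only genuine (though modest) obstacle to be the smoothness step, as it is the sole place where the structure of the problem enters substantively: it relies on the smoothness of the Lyapunov map from \Cref{lem:dlyap} together with the smooth dependence of $A_{\mathrm{cl}}$ and $\Sigma_K$ on $K$. Once smoothness is secured, tensoriality is immediate from the base-point-only dependence of $\lyap(A_{\mathrm{cl}}, \Sigma_K)$ via the tensor characterization lemma, and symmetry is just the trace manipulation above.
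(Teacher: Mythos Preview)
Your proposal is correct and follows essentially the same approach as the paper: both invoke \Cref{lem:dlyap} for well-definedness and smoothness of $\lyap(A_{\mathrm{cl}},\Sigma_K)$, establish $C^\infty(\stableK)$-bilinearity via the trace and appeal to the Tensor Characterization Lemma in \cite{lee2013smooth}, and deduce symmetry from the symmetry of $\lyap(A_{\mathrm{cl}},\Sigma_K)$ together with trace invariance under transpose and cyclic permutation.
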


Now, let $\metric \colon \stableK \mapsto T^2(T^*\stableK)$ be a smooth section of the bundle $T^2(T^*\stableK)$ that sends $K$ to $\tensor{\cdot}{\cdot}{Y}\big|_K$. Then, $\metric$ is in fact a Riemannian metric under mild conditions formalized below.

\begin{proposition}\label[proposition]{prop:Riemannian-metric}
If $(A_{\mathrm{cl}}, \Sigma_K^{1/2})$ is controllable and 
for all $K \in \stableK$, $\Sigma_K~\succeq~0$, 
then $(\stableK,\metric)$ is a Riemannian manifold. Moreover,  if we define the mapping $Y \colon \stableK \mapsto \Amatrices$ sending $K$ to 
\[Y_K \coloneqq \lyap(A_{\mathrm{cl}}, \Sigma_K),\] 
then, with respect to the dual coframe $(d x^{i,j})$, $\metric = \metric_{(i,j) (k,\ell)} d x^{i,j} \otimes d x^{k, \ell}$, where each $\metric_{(i,j) (k,\ell)} \in C^\infty(\stableK)$ satisfies $\metric_{(i,j) (k,\ell)}(K) = [Y_K]_{\ell,j}$ if $i = k$, and $0$ otherwise.
Furthermore, the inverse \emph{matrix} $\metric^{(i,j) (k,\ell)}$ satisfies $\metric^{(i,j) (k,\ell)}(K) = [Y_K^{-1}]_{\ell,j}$ if $i = k$, and $0$ otherwise.
\end{proposition}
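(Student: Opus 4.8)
The plan is to observe that \Cref{lem:tensor} has already supplied everything about $\metric$ except positive-definiteness: it is a smooth, symmetric, covariant $2$-tensor field. Hence, to conclude that $(\stableK,\metric)$ is Riemannian, the only remaining task is to show that $\metric|_K$ is positive-definite on $T_K\stableK \cong \Kmatrices$ for every $K$, and the bulk of the argument will be reducing this to the positivity of the single matrix $Y_K = \lyap(A_{\mathrm{cl}},\Sigma_K)$.

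First I would establish $Y_K \succ 0$. Since $\Sigma_K = \Sigma_1 + K^\intercal\Sigma_2 K \succeq 0$ is symmetric, the Lyapunov-map properties recalled in the Notation section give that $Y_K$ is symmetric and positive semidefinite, and moreover $Y_K \succ 0$ precisely because $(A_{\mathrm{cl}},\Sigma_K^{1/2})$ is assumed controllable. With this in hand, for any $V \in \Kmatrices$ representing a tangent vector under the usual identification, I would rewrite $\tensor{V}{V}{Y}|_K = \tr{V^\intercal V\,Y_K} = \tr{(V Y_K^{1/2})(V Y_K^{1/2})^\intercal} = \|V Y_K^{1/2}\|_F^2 \ge 0$, using the cyclic property of the trace together with the symmetric positive-definite square root $Y_K^{1/2}$. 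Since $Y_K^{1/2}$ is invertible, this quantity vanishes if and only if $V = 0$, which is exactly positive-definiteness; combined with \Cref{lem:tensor}, this proves that $(\stableK,\metric)$ is a Riemannian manifold.

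Next I would read off the components by evaluating the tensor on the coordinate frame. Writing $\partial_{i,j}$ as the elementary matrix $\e_i\e_j^\intercal$ (a one in slot $(i,j)$ and zeros elsewhere), a short computation gives $(\partial_{i,j})^\intercal\,\partial_{k,\ell} = \e_j(\e_i^\intercal\e_k)\e_\ell^\intercal = \delta_{ik}\,\e_j\e_\ell^\intercal$, so that $\metric_{(i,j)(k,\ell)}(K) = \tr{(\partial_{i,j})^\intercal \partial_{k,\ell}\,Y_K} = \delta_{ik}\,\e_\ell^\intercal Y_K \e_j = \delta_{ik}\,[Y_K]_{\ell,j}$, which is the claimed formula. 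This also exhibits the Gram matrix as block-diagonal in the first index, each block equal to $Y_K^\intercal = Y_K$.

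For the inverse I would avoid inverting the full $mn\times mn$ Gram matrix and instead verify the proposed formula directly against the defining relation. Setting $\metric^{(i,j)(k,\ell)}(K) = \delta_{ik}[Y_K^{-1}]_{\ell,j}$ (well-defined since $Y_K \succ 0$), the contraction $\sum_{k,\ell}\metric_{(i,j)(k,\ell)}\metric^{(k,\ell)(p,q)}$ collapses via the two Kronecker deltas to $\delta_{ip}\sum_\ell [Y_K^{-1}]_{q,\ell}[Y_K]_{\ell,j} = \delta_{ip}[Y_K^{-1}Y_K]_{q,j} = \delta_{ip}\delta_{qj}$, i.e. the identity on index pairs, confirming the inverse. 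I expect the genuine mathematical content to be the strictness in the positivity step---namely that controllability upgrades $Y_K \succeq 0$ to $Y_K \succ 0$, which is exactly what forces the strict trace inequality---whereas the component and inverse computations are routine index bookkeeping, the only pitfall there being the consistent placement of the transposed indices $(\ell,j)$.
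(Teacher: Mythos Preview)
Your proposal is correct and follows essentially the same approach as the paper: you invoke \Cref{lem:tensor} for smoothness and symmetry, upgrade $Y_K \succeq 0$ to $Y_K \succ 0$ via controllability to get positive-definiteness through the same Frobenius-norm identity, and then evaluate $\metric$ on the elementary-matrix frame. The only cosmetic difference is that you verify the inverse by direct contraction against the Kronecker-delta identity, whereas the paper phrases the same computation as solving a block equation $[\metric^{(k,\cdot)(k,\cdot)}]\,Y_K = I_n$; the content is identical.
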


\begin{remark}
The premise of \Cref{prop:Riemannian-metric} is satisfied if $\Sigma_K \succ 0$ for all $K \in \stableK$; \eg, when $\Sigma_1 \succ 0$ and $\Sigma_2 \succeq 0$. 
Also, by some algebraic manipulations, the well-known Hewer's algorithm \cite{hewer1971iterative} can be viewed as a Riemannian quasi-Newton iteration with respect to this Riemannian metric but with the Euclidean connection--also see \Cref{rem:hewer}. 
Finally, we remark that the developed machinery can be generalized by modifying the Riemannian metric to incorporate a ``preconditioning'' positive definite $\Theta \succ 0$ such as:
\begin{equation*}
    \tensor{V}{W}{Y}\big|_K \coloneqq \tr{(V_K)^\intercal \;\Theta\; W_K \; \lyap(A_{\mathrm{cl}},\Sigma_K)}, \quad \forall K \in \stableK.
\end{equation*}
In this case, a judicious choice of $\Theta$ may, in general, improve the performance of corresponding numerical schemes. This metric also has a system theoretic interpretation that will be elaborated upon in our subsequent works.
\end{remark}

\subsubsection{Riemannian connection on $T\stableK$}
First, consider a Riemannian submanifold $(\substableK, \submetric)$ with $\submetric \coloneqq \iota^*_{\footnotesize \substableK} \metric$, where $\iota^*_{\footnotesize \substableK}$ denotes the pull-back by inclusion.
In order to understand the second order behavior--\ie,  the \emph{Hessian}--of a smooth function on $\substableK$, we need to study the notion of {\em connection} in $T \stableK$, and how 
it relates to analogous construct 
in the tangent bundle $T \substableK${--see \cite{lee2018introduction} for further details}.
Recall that by Fundamental Theorem of Riemannian Geometry, there exists a unique connection $\nabla \colon \fX(\stableK)\times \fX(\stableK) \mapsto \fX(\stableK)$ in $T \stableK$ that is compatible with $\metric$ and symmetric, \ie, for all $U,V,W \in \fX(\stableK)$ we have:
\begin{itemize}
    \item $\nabla_U \tensor{V}{W}{Y} = \tensor{\nabla_U V}{W}{Y} + \tensor{V}{\nabla_U W}{Y}$,
    \item $\nabla_U V - \nabla_V U \equiv [U,V]$,
\end{itemize}
where $[V,W] \in\fX(\stableK)$ denotes the Lie bracket of $V$ and $W$. 

Note that, the restriction of $\nabla$ to $\fX(\substableK) \times \fX(\substableK)$ would not be a connection in $T \substableK$ as its range does not necessary lie in $\fX(\substableK)$.
However, we can denote the (Riemannian) \textit{tangential} and \textit{normal projections} by $\tproj\colon T \stableK|_{\footnotesize\substableK} \mapsto T \substableK$ and $\nproj\colon T \stableK|_{\footnotesize\substableK} \mapsto N \substableK$, respectively, with $N \substableK$ indicating the normal bundle of $\substableK$. Then, by  Gauss Formula, if $\widetilde{\nabla}\colon\fX(\substableK) \times \fX(\substableK) \mapsto \fX(\substableK)$ denotes the Riemannian connection in the tangent bundle $T \substableK$, computed via,
\begin{equation}\label{eq:subconnection}
    \widetilde{\nabla}_U V = \tproj \nabla_U V,
\end{equation}
for any $U,V \in \fX(\substableK)$  arbitrarily extended to vector fields on a neighborhood of $\substableK$ in $\stableK$.

{For computational purposes, we also obtain the Christoffel symbols associated with $\metric$ (denoted by $\Gamma^{(i,j)}_{(k,\ell)(p,q)}$) in the global coordinate frame. This would, in turn, completely characterize the connection $\nabla$ and facilitates its computation in this frame.}
\vspace{-0.1cm}
\begin{proposition}\label[proposition]{lem:christoffel}
Consider a point $K \in \stableK$ and, under the usual identification of $T \stableK$, define \footnote{This coincides with the action of $\partial_{(p,q)}|_K$ on the mapping $K \mapsto Y_K$.}
\begin{gather*}
    \begin{aligned}
        \lyapij_K{(p,q)} \coloneqq \lyap \big(A_{\mathrm{cl}},\; &B \partial_{(p,q)} Y_K A_{\mathrm{cl}}^\intercal + A_{\mathrm{cl}} Y_K \partial_{(p,q)}^\intercal B^\intercal\\
    &+ \partial_{(p,q)}^\intercal \Sigma_2 K + K^\intercal \Sigma_2 \partial_{(p,q)} \big)
    \end{aligned}
\end{gather*}
for each $(p,q) \in [m] \times [n]$ where $Y_K = \lyap(A_{\mathrm{cl}},\Sigma_K)$.
Then, the Christoffel symbols associated with the metric $\metric$ in the global coordinate frame $(\partial_{(i,j)})$ satisfies $\Gamma^{(i,j)}_{(k,\ell)(p,q)} (K) =$
\begin{gather*}
    \begin{cases}
    \sfrac{1}{2}\left[\lyapij_K{(p,q)}\, Y_K^{-1} \right]_{(\ell,j)},  & \text{if $k = i \neq p$,}\\
    \sfrac{1}{2} \left[ \lyapij_K{(k,\ell)}\, Y_K^{-1}\right]_{(q,j)}, & \text{if $p = i \neq k$,}\\
    \textstyle
    - \sfrac{1}{2}\sum_{s} \left[ \lyapij_K{(i,s)}  \right]_{(q,\ell)} [Y_K^{-1}]_{(s,j)}, & \text{if $p = k \neq i$,}\\
    \textstyle \sfrac{1}{2}\sum_{s} \big( \left[\lyapij_K{(i,\ell)}\right]_{(q,s)} + \left[\lyapij_K{(i,q)}\right]_{(\ell,s)} & \\
    \qquad\quad -\left[\lyapij_K{(i,s)}\right]_{(q,\ell)} \big) \;[Y_K^{-1}]_{(s,j)}, 
    & \text{if $p = k = i$,}\\
    0, & \text{otherwise.}
    \end{cases}
\end{gather*}
\end{proposition}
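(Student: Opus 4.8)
The plan is to compute the Christoffel symbols directly from their coordinate expression, exploiting that $(\partial_{(i,j)})$ is a global \emph{coordinate} frame and hence holonomic, with vanishing Lie brackets. For the Riemannian connection $\nabla$ this reduces, via the compatibility and symmetry properties recalled just before the statement (the coordinate Koszul formula), to
\[
\Gamma^{(i,j)}_{(k,\ell)(p,q)} = \tfrac{1}{2}\,\metric^{(i,j)(s,t)}\Big(\partial_{(k,\ell)}\metric_{(p,q)(s,t)} + \partial_{(p,q)}\metric_{(k,\ell)(s,t)} - \partial_{(s,t)}\metric_{(k,\ell)(p,q)}\Big),
\]
with an implicit sum over $(s,t)$. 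The first simplification comes from \Cref{prop:Riemannian-metric}: since $\metric^{(i,j)(s,t)}(K) = [Y_K^{-1}]_{t,j}$ exactly when $s=i$ and vanishes otherwise, the contraction over $(s,t)$ collapses to a single sum over $t$ in which only the block $s=i$ survives, carrying the prefactor $[Y_K^{-1}]_{t,j}$.

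Next I would isolate the single analytic ingredient required, namely the coordinate derivative of the map $K\mapsto Y_K = \lyap(A_{\mathrm{cl}},\Sigma_K)$ along $\partial_{(p,q)}$. Differentiating through $A_{\mathrm{cl}} = A+BK$ and $\Sigma_K = \Sigma_1 + K^\intercal \Sigma_2 K$ produces the perturbation directions $E = B\partial_{(p,q)}$ and $F = \partial_{(p,q)}^\intercal \Sigma_2 K + K^\intercal \Sigma_2 \partial_{(p,q)}$; substituting these into the differential formula of \Cref{lem:dlyap} yields precisely $\partial_{(p,q)} Y_K = \lyapij_K{(p,q)}$, which verifies the footnoted claim. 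Two structural facts recorded at this stage are used repeatedly afterward: $Y_K$ is symmetric positive definite (so $Y_K^{-1}$ is symmetric), and $\lyapij_K{(p,q)}$ is symmetric, since its defining second Lyapunov argument becomes manifestly symmetric once $Y_K = Y_K^\intercal$ and $\Sigma_2 = \Sigma_2^\intercal$ are invoked.

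Finally I would evaluate the three metric-derivative terms using the explicit components $\metric_{(a,b)(c,d)}(K) = [Y_K]_{d,b}$ when $a=c$ and $0$ otherwise. Each term carries its own Kronecker constraint on the \emph{row} indices: after the inverse forces $s=i$, the term $\partial_{(k,\ell)}\metric_{(p,q)(i,t)}$ survives only when $p=i$, the term $\partial_{(p,q)}\metric_{(k,\ell)(i,t)}$ only when $k=i$, and the term $\partial_{(i,t)}\metric_{(k,\ell)(p,q)}$ only when $k=p$. Sorting the possible coincidence patterns of $(i,k,p)$---all equal, $i=k\neq p$, $i=p\neq k$, $k=p\neq i$, or all distinct---therefore reproduces exactly the five listed cases, with the last (all distinct) giving $0$. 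In each surviving case the remaining sum over $t$ reassembles into a matrix product of the relevant $\lyapij_K{(\cdot)}$ with $Y_K^{-1}$, and the symmetry of both $\lyapij_K{(\cdot)}$ and $Y_K^{-1}$ is what lets me transpose indices to land on the stated entries $(\ell,j)$, $(q,j)$, or $(q,\ell)$. I expect the main obstacle to be purely the multi-index bookkeeping---tracking which of $i,k,p$ must coincide for each of the three derivative terms and reconciling the transposes in the coincident case $p=k=i$---rather than any substantive analytic difficulty, since the only derivative computation is fully supplied by \Cref{lem:dlyap}.
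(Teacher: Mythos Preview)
Your proposal is correct and follows essentially the same route as the paper's proof: start from the coordinate Koszul formula, collapse the contraction using the inverse-metric sparsity $\metric^{(i,j)(s,t)}(K)=[Y_K^{-1}]_{t,j}\,\delta_{s,i}$ from \Cref{prop:Riemannian-metric}, identify $\partial_{(p,q)} Y_K = \lyapij_K{(p,q)}$ via \Cref{lem:dlyap}, and then case-split on which of $i,k,p$ coincide using the row-index Kronecker constraints in $\metric_{(a,b)(c,d)}$. The only minor addition in the paper is that the case $p=i\neq k$ is dispatched by appealing to the symmetry $\Gamma^{(i,j)}_{(k,\ell)(p,q)}=\Gamma^{(i,j)}_{(p,q)(k,\ell)}$ rather than redoing the computation.
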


\begin{remark}\label{remark:geodesics}
Note that with respect to the global coordinates of $(\stableK,\metric,\nabla)$, the \textit{geodesic equation} is a system of $(mn)$ second-order ordinary differential equations whose varying coefficients involve $(mn)^3$ Christoffel symbols $\Gamma^{(i,j)}_{(i,\ell)(i,q)}$ as obtained above. Therefore, computing the Riemannian Exponential mapping is computationally burdensome; as such,
in this work, we avoid using it as a retraction.
\end{remark}

\subsection{Extrinsic analysis of a smooth function constrained on a Riemannian submanifold}\label{sec:extrinsic}
In this subsection, we study the gradient and Hessian operators of a constrained smooth function from an extrinsic point of view, which is yet to be defined. In other words, we consider $(\substableK,\submetric)$ as a Riemannian submanifold of $(\stableK,\metric)$, where $\submetric = \iota_{\footnotesize\substableK}^* \metric$, with $\iota_{\footnotesize\substableK}^*$ denoting the pull-back by inclusion of $\substableK$ into $\stableK$. Then, by considering any smooth function $f$ on $\stableK$, we can define its restriction to $\substableK$ as
\[h \coloneqq f|_{\footnotesize\substableK},\]
and examine how its gradient and Hessian operators are related to those of $f$. In order to answer this question, we utilize the Riemannian connection to analyze the second order behavior of $f$ (or that of $h$). 

First, recall from \cite{lee2018introduction} that the gradient of $f$ with respect to the Riemannian metric $\metric$, denoted by $\grad{f} \in \fX(\stableK)$, is the unique vector field satisfying 
\[\tensor{V}{\grad{f}}{Y} = V f, \]
for any $V \in \fX(\stableK)$.
Then, we denote the \emph{Hessian operator} of $f \in C^\infty(\stableK)$ as the map $\hess{f}\colon\fX(\stableK) \mapsto \fX(\stableK)$ defined by
\[\hess{f}[U] \coloneqq \nabla_U \grad{f},\]
for any $U \in \fX(\stableK)$. Note that we use the same notation to denote the gradient and Hessian operators defined on the submanifold $\substableK$
(see the appendix).
Finally, for any normal vector field $N$ (\ie, a smooth section of $N\substableK$), the \emph{Weingarten map} in the direction of $N$ is a self-adjoint linear map denoted by $\wein_N\colon \fX(\substableK) \mapsto \fX(\substableK)$, {which defines a smooth bundle homomorphism from $T\substableK$ to itself (linear on each tangent space), characterized by \cite{lee2018introduction},
\[\tensor{\wein_N[V]}{W}{Y} = \tensor{N}{\nproj(\nabla_V W)}{Y}, \quad \forall V,W \in \fX(\stableK).\]}%
Now, we can formalize this abstract extrinsic analysis as follows.

\begin{proposition}\label[proposition]{prop:extrinsic}
Suppose $\substableK$ is an embedded Riemannian submanifold of $\stableK$, both equipped with their respective Riemannian connections. Let $f \in C^{\infty}(\stableK)$ be any smooth function; then $h \coloneqq f |_{\footnotesize\substableK}$ is smooth on $\substableK$ and we have
\[\grad{h} = \tproj (\grad{f}|_{\footnotesize\substableK}).\]
Furthermore, under the usual identification of $T\substableK \subset T\stableK$, for any $V \in \fX(\substableK)$ we have,
\begin{equation*}
    \hess{h}[V] = \tproj (\hess{f}[V]\big|_{\footnotesize\substableK}) + \wein_{\nproj (\grad{f}|_{\footnotesize\substableK})}[V],
\end{equation*}
where $V$ is arbitrarily extended to vector fields on a neighborhood of $\substableK$ in $\stableK$.
\end{proposition}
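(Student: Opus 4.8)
The plan is to derive both identities purely formally, by unwinding the defining properties of the Riemannian gradient and of the Hessian operator $\hess{f}[U] = \nabla_U \grad{f}$, and then repeatedly invoking the pointwise orthogonal splitting $T\stableK|_{\footnotesize\substableK} = T\substableK \oplus N\substableK$ together with the Gauss formula \cref{eq:subconnection} and metric compatibility of $\nabla$. Smoothness of $h$ is immediate, since the inclusion $\iota_{\footnotesize\substableK}$ is smooth and $h = f \circ \iota_{\footnotesize\substableK}$. For the gradient, I would fix an arbitrary $V \in \fX(\substableK)$ and note two facts: first, because $\submetric = \iota^*_{\footnotesize\substableK}\metric$, the two metrics agree on pairs of vectors tangent to $\substableK$; second, $Vh = (Vf)|_{\footnotesize\substableK}$ whenever $V$ is tangent. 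Hence the defining relation $\tensor{V}{\grad{h}}{Y} = Vh$ rewrites as $\tensor{V}{\grad{h}}{Y} = \tensor{V}{\grad{f}|_{\footnotesize\substableK}}{Y}$. Splitting $\grad{f}|_{\footnotesize\substableK} = \tproj(\grad{f}|_{\footnotesize\substableK}) + \nproj(\grad{f}|_{\footnotesize\substableK})$ and using that $V$ is orthogonal to the normal summand annihilates the normal part, leaving $\tensor{V}{\grad{h}}{Y} = \tensor{V}{\tproj(\grad{f}|_{\footnotesize\substableK})}{Y}$ for all tangent $V$; nondegeneracy of $\submetric$ (uniqueness of the gradient) then forces $\grad{h} = \tproj(\grad{f}|_{\footnotesize\substableK})$.

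For the Hessian, I would set $N \coloneqq \nproj(\grad{f}|_{\footnotesize\substableK})$, so that $\grad{h} = \grad{f}|_{\footnotesize\substableK} - N$ along $\substableK$. By definition $\hess{h}[V] = \widetilde{\nabla}_V \grad{h}$, and the Gauss formula \cref{eq:subconnection} gives $\widetilde{\nabla}_V \grad{h} = \tproj(\nabla_V \grad{h})$ for any ambient extension. Writing $\nabla_V \grad{h} = \nabla_V \grad{f} - \nabla_V \widetilde{N}$ and projecting tangentially, the first piece produces $\tproj(\hess{f}[V]|_{\footnotesize\substableK})$ since $\hess{f}[V] = \nabla_V \grad{f}$. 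The crux is the second piece, for which I claim the Weingarten relation $\tproj(\nabla_V N) = -\wein_N[V]$. This follows by differentiating the identity $\tensor{N}{W}{Y} \equiv 0$ (valid for $N$ normal and $W$ tangent) along $V$ and invoking compatibility of $\nabla$ with $\metric$: $0 = V\tensor{N}{W}{Y} = \tensor{\nabla_V N}{W}{Y} + \tensor{N}{\nabla_V W}{Y}$, so that $\tensor{\tproj(\nabla_V N)}{W}{Y} = -\tensor{N}{\nproj(\nabla_V W)}{Y} = -\tensor{\wein_N[V]}{W}{Y}$ for all tangent $W$, where the last equality is precisely the definition of $\wein_N$. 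Substituting back yields exactly
\[ \hess{h}[V] = \tproj(\hess{f}[V]|_{\footnotesize\substableK}) + \wein_{\nproj(\grad{f}|_{\footnotesize\substableK})}[V]. \]

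The main obstacle — indeed essentially the only delicate point — is the well-definedness of the quantities above independent of the arbitrary extensions used. To legitimize both the Gauss formula and the term $\tproj(\nabla_V \widetilde{N})$, I would invoke the standard fact that, for $V$ tangent to $\substableK$, the ambient covariant derivative $\nabla_V(\cdot)$ along $\substableK$ depends only on the restriction of its argument to $\substableK$ (it may be computed along a curve lying in $\substableK$ with velocity $V$); consequently $\tproj(\nabla_V \widetilde{N})$ is independent of the chosen extension of $N$, and likewise $\widetilde{\nabla}_V \grad{h}$ is well defined. Apart from this care with extensions, the remainder is routine bookkeeping with the orthogonal decomposition and the definitions of $\grad$, $\hess$, and $\wein_N$ already recorded in the excerpt.
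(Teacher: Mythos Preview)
Your proposal is correct and follows essentially the same route as the paper's proof: smoothness via $h=f\circ\iota_{\footnotesize\substableK}$, the gradient identity via the defining relation $\submetric(W,\grad h)=Wh$ together with the orthogonal splitting and uniqueness, and the Hessian identity via the Gauss formula applied to $\grad h=\grad f|_{\footnotesize\substableK}-\nproj(\grad f|_{\footnotesize\substableK})$. The only cosmetic difference is that the paper simply cites the Weingarten equation $\tproj\nabla_V N=-\wein_N[V]$ from \cite[Proposition~8.4]{lee2018introduction}, whereas you re-derive it from metric compatibility; your added remarks on extension-independence are likewise implicit in the paper's appeal to the Gauss formula.
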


\subsubsection{On the choice of connection}\label{sec:choice_connection}
On the manifold $(\stableK,\metric,\nabla)$, computing the exponential map requires finding solution to a system of
ordinary differential equations 
of dimension $m n$. 
This approach is not only computationally demanding but also does not necessarily provide an exponential map of the submanifold $\substableK$ (unless it happens to be totally geodesic). 
In order to avoid the computation of the Riemannian exponential map, it seems reasonable to perform updates by using simpler \emph{retractions} from the tangent bundle to the manifold (cf. \cite{absil2009optimization}); however, in general, we do not have access to such a retraction in our setup. Another computational overhead of utilizing the Riemannian connection associated with the Riemannian metric $\metric$ pertains to the $(mn)$-number of Lyapunov equations involved in obtaining the Christoffel symbols at each point. 

On the other hand, for applications in which the submanifold appears as  $\substableK = \stableK \cap \constraint$, where $\constraint$ is an affine subspace of $\Kmatrices$, it might seem reasonable to consider the ambient manifold $(\stableK,\metric, \Enabla)$, where $\Enabla$ refers to the so-called \emph{Euclidean connection}, \ie, the connection whose symbols (with respect to the global coordinates) all vanish (\ie, $\overline{\Gamma}^{i,j}_{(k,\ell)(p,q)} \equiv 0$ on $\stableK$). This results in a simpler \emph{Hessian} operator which, however, does not respect the geometry of $(\stableK,\metric)$ simply because $\Enabla$ is not compatible with the metric $\metric$--in contrast to its associated Riemannian connection.
Nonetheless, for completeness, we also define the \emph{Euclidean Hessian operator} of $f \in C^\infty(\stableK)$ as the map $\euchess{f}\colon\fX(\stableK) \mapsto \fX(\stableK)$ defined by
\[\euchess{f}[U] \coloneqq \Enabla_U \grad{f},\]
for any $U \in \fX(\stableK)$. This operator enjoys similar properties as that of $\hess{f}$, but contains different second order information about $f$ (\eg, see \Cref{fig:motivation} for a comparison). 
%

\section{Riemannian Optimization on Submanifolds of $\stableK$ with Linear Structure} \label{sec:learning-algo}
In this section, we propose an optimization
algorithm for 
smooth cost functions, constrained to submanifolds of $\stableK$ that are endowed with a linear structure; that is, $\substableK =\stableK \cap \constraint$, where $\constraint$ entails a linear structure in $\Kmatrices$. The proposed algorithm,
does not involve the exponential mapping (due to its computational complexity); note that no other retraction from the tangent space onto the manifold $\stableK$ is known. Instead, we exploit this linear structure together with a geometrically-induced stability certificate that guarantees stability of the iterates by adjusting the respective 
stepsize.

In what follows, we first introduce this
stability certificate and then propose 
the algorithm.
We then show how this certificate can 
be utilized to choose stepsizes that
guarantee a linear convergence rate;
furthermore, we will discuss existence of neighborhoods--containing a local minima--on which the algorithm achieves a quadratic rate of convergence.

\subsubsection{Stability certificate and (direct) policy optimization}
Recall that $\stableK$ is open in $\Kmatrices$. Nonetheless, we provide the following result
that quantifies this fact with respect to the problem parameters; an observation that
has an immediate utility for 
analyzing iterative algorithms on $\stableK$. 

\begin{lemma}\label[lemma]{lem:stability-cert}
Consider a smooth mapping $\mathcal{Q}\colon \stableK \mapsto \Amatrices$ that sends $K$ to any $\mathcal{Q}_K \succ 0$.
For any direction $G \in T_K\stableK \cong \Kmatrices$ at any point $K \in \stableK$, if 
\begin{gather*}
    0 \leq \eta \leq \stabcer_K \coloneqq \Large\sfrac{\lambdamin(\mathcal{Q}_K)}{\left(2 \lambdamax\left(\lyap(A_{\mathrm{cl}}^\intercal, \mathcal{Q}_K)\right) \|BG\|_2 \right)},
\end{gather*}
then $K^+ \coloneqq K + \eta G \in \stableK$; $\stabcer_K$ will be referred to as the \emph{stability certificate} at $K$.
\end{lemma}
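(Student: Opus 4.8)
The plan is to certify that the perturbed closed-loop matrix $A_{\mathrm{cl}}^+ := A + BK^+ = A_{\mathrm{cl}} + \eta BG$ stays Schur by reusing, as a \emph{common} Lyapunov (Stein) certificate, the one attached to $A_{\mathrm{cl}}$. Set $P := \lyap(A_{\mathrm{cl}}^\intercal,\mathcal{Q}_K)$. Because $K \in \stableK$ makes $A_{\mathrm{cl}}$ Schur and $\mathcal{Q}_K \succ 0$, \Cref{lem:dlyap} together with the positivity-preservation property of $\lyap$ stated in the Notation section guarantees $P \succ 0$, and by construction $P$ solves the Stein equation $P - A_{\mathrm{cl}}^\intercal P A_{\mathrm{cl}} = \mathcal{Q}_K$. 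Invoking the discrete Lyapunov theorem (a matrix $M$ is Schur iff $P - M^\intercal P M \succ 0$ for some $P \succ 0$), it then suffices to show that this same $P$ satisfies $P - (A_{\mathrm{cl}}^+)^\intercal P\, A_{\mathrm{cl}}^+ \succ 0$ for every $0 \le \eta \le \stabcer_K$.

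Next I would expand the Stein residual at $A_{\mathrm{cl}}^+$. Writing $\Delta := \eta BG$,
\[
P - (A_{\mathrm{cl}}^+)^\intercal P\, A_{\mathrm{cl}}^+ = \mathcal{Q}_K - \underbrace{\left( A_{\mathrm{cl}}^\intercal P \Delta + \Delta^\intercal P A_{\mathrm{cl}} + \Delta^\intercal P \Delta \right)}_{=:\,\mathcal{E}},
\]
so the claim reduces to $\mathcal{E} \prec \mathcal{Q}_K$, for which $\|\mathcal{E}\|_2 < \lambdamin(\mathcal{Q}_K)$ is sufficient. The technical crux is to bound the two cross terms \emph{without} a spurious $\|A_{\mathrm{cl}}\|_2$ factor, and this is exactly where the Stein equation pays off: from $A_{\mathrm{cl}}^\intercal P A_{\mathrm{cl}} = P - \mathcal{Q}_K \preceq P$ one obtains $\|P^{1/2} A_{\mathrm{cl}} P^{-1/2}\|_2 \le 1$, hence $\|P^{1/2} A_{\mathrm{cl}}\|_2 \le \sqrt{\lambdamax(P)}$, which yields
\[
\|A_{\mathrm{cl}}^\intercal P \Delta\|_2 = \big\|(P^{1/2} A_{\mathrm{cl}})^\intercal (P^{1/2}\Delta)\big\|_2 \le \lambdamax(P)\,\|\Delta\|_2 .
\]
With $\|\Delta\|_2 = \eta \|BG\|_2$ the two cross terms contribute at most $2\,\lambdamax(P)\,\|BG\|_2\,\eta$, which is precisely the denominator in $\stabcer_K$; equating this with $\lambdamin(\mathcal{Q}_K)$ recovers the stated step-size threshold.

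The main obstacle is the residual quadratic term $\Delta^\intercal P \Delta = \eta^2 (BG)^\intercal P (BG) \succeq 0$, which adds an extra $\lambdamax(P)\|BG\|_2^2\,\eta^2$ not captured by the first-order threshold $\stabcer_K$. Since $\stabcer_K$ matches only the dominant (linear) contribution, closing the argument to exactly $\stabcer_K$ requires care. I would pursue one of two routes. The sharper route is to estimate $\mathcal{E}$ directly via the identity $x^\intercal \mathcal{E} x = \|P^{1/2} A_{\mathrm{cl}}^+ x\|_2^2 - \|P^{1/2} A_{\mathrm{cl}} x\|_2^2$ and the reverse triangle inequality in the $P$-weighted inner product, which folds part of the quadratic contribution back into the linear estimate and tightens the admissible constant. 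The more robust route is a continuity/connectedness argument along the segment $\tau \mapsto K + \tau G$, $\tau \in [0,\eta]$: since $\stableK$ is open, were the segment to leave $\stableK$ there would be a first crossing $\tau_0$ with $\rho\big(A + B(K+\tau_0 G)\big)=1$, which forces $\lyap\big((A+B(K+\tau G))^\intercal,\mathcal{Q}_K\big)$ to blow up as $\tau \uparrow \tau_0$; a uniform bound on this running Lyapunov solution over $[0,\stabcer_K]$, obtained from the same Stein-residual estimate, contradicts the crossing. In either case, once $\mathcal{Q}_K - \mathcal{E} \succ 0$ is secured, the discrete Lyapunov theorem gives $\rho(A_{\mathrm{cl}}^+)<1$, i.e.\ $K^+ \in \stableK$, completing the proof.
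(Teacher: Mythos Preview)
Your setup and the cross-term estimate $\|A_{\mathrm{cl}}^\intercal P \Delta\|_2 \le \lambdamax(P)\,\|\Delta\|_2$ are correct, but the argument does not close at the stated threshold $\stabcer_K$. At $\eta=\stabcer_K$ the two cross terms already saturate the budget $\lambdamin(\mathcal{Q}_K)$, so any strictly positive quadratic term $\Delta^\intercal P \Delta$ pushes $\|\mathcal{E}\|_2$ (or $x^\intercal\mathcal{E}x$) above $\lambdamin(\mathcal{Q}_K)$, and neither of your proposed fixes repairs this. In Route~1, the difference-of-squares identity plus the (reverse) triangle inequality in the $P$-norm gives $x^\intercal\mathcal{E}x \le 2\|P^{1/2}A_{\mathrm{cl}}x\|\,\|P^{1/2}\Delta x\| + \|P^{1/2}\Delta x\|^2$, which is exactly the same linear-plus-quadratic estimate you already have; nothing is ``folded back.'' In Route~2, bounding the running solution $\lyap\big((A_{\mathrm{cl}}+\tau BG)^\intercal,\mathcal{Q}_K\big)$ uniformly over $[0,\stabcer_K]$ presupposes Schur stability of $A_{\mathrm{cl}}+\tau BG$ on that interval, which is the conclusion itself.

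The missing idea, and what the paper does, is to bound the cross terms via a Young-type inequality with a free parameter $a>0$: for $P\succ0$,
\[
\eta\, G^\intercal B^\intercal P A_{\mathrm{cl}} + \eta\, A_{\mathrm{cl}}^\intercal P B G \;\preccurlyeq\; a\,A_{\mathrm{cl}}^\intercal P A_{\mathrm{cl}} + \tfrac{\eta^2}{a}\, G^\intercal B^\intercal P B G,
\]
and then substitute the Stein identity $A_{\mathrm{cl}}^\intercal P A_{\mathrm{cl}} = P - \mathcal{Q}_K$. This produces an \emph{extra} $a\,\mathcal{Q}_K$ of slack on the right-hand side, so the residual becomes $L \succcurlyeq (1+a)\mathcal{Q}_K - aP - (1+\tfrac{1}{a})\eta^2 G^\intercal B^\intercal P B G$. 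Optimizing over $a$ (the paper takes $a=\lambdamin(\mathcal{Q}_K)/\big(2\lambdamax(P)-2\lambdamin(\mathcal{Q}_K)\big)$ when this is positive) yields $\lambdamin(L)\ge \tfrac{1}{2}\lambdamin(\mathcal{Q}_K) - \big(\tfrac{2\lambdamax^2(P)}{\lambdamin(\mathcal{Q}_K)}-\lambdamax(P)\big)\eta^2\|BG\|_2^2$, which stays strictly positive for all $0\le\eta\le\stabcer_K$ and absorbs the quadratic term. Your spectral-norm approach never generates this extra $\mathcal{Q}_K$-slack, which is why it falls short by exactly the quadratic contribution.
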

\begin{proof}
As $K$ is stabilizing, for any such $\mathcal{Q}_K$, there exists a matrix $P = \lyap(A_{\mathrm{cl}}^\intercal, \mathcal{Q}_K) \succ 0$ satisfying $P = A_{\mathrm{cl}^+}^\intercal P A_{\mathrm{cl}^+} + L$
with $A_{\mathrm{cl}^+} \coloneqq A + B K^+$ and
$L \coloneqq  \mathcal{Q}_K + A_{\mathrm{cl}}^\intercal P A_{\mathrm{cl}} - A_{\mathrm{cl}^+}^\intercal P A_{\mathrm{cl}^+}$.
Therefore, in order to establish that $K^+$ is stabilizing, as $P \succ 0$--by the Lyapunov Stability Criterion \cite[Theorem8.4]{Hespanha2018linear}--it suffices to show that $L\succ 0$. 
Next,
\begin{align}
L =&  \mathcal{Q}_K - \eta G^\intercal B^\intercal P A_{\mathrm{cl}} - \eta A_{\mathrm{cl}}^\intercal P B G - \eta^2 G^\intercal B^\intercal P B G \nonumber\\
  \succcurlyeq &  \mathcal{Q}_K - a A_{\mathrm{cl}}^\intercal P A_{\mathrm{cl}} - (1 + {\large\sfrac{1}{a}})\eta^2 G^\intercal B^\intercal P B G \nonumber\\
  =& (1+a)\mathcal{Q}_K - a P - (1+{\large\sfrac{1}{a}})\eta^2 G^\intercal B^\intercal P B G, \label{eq:L-lowerbound}
\end{align}
because for any $a>0$, $P \succ 0$ implies
\begin{gather*}
a A_{\mathrm{cl}}^\intercal P A_{\mathrm{cl}} +  ({\large\sfrac{\eta^2}{a}}) G^\intercal B^\intercal P B G \succcurlyeq
     \eta G^\intercal B^\intercal P A_{\mathrm{cl}} + \eta A_{\mathrm{cl}}^\intercal P B G.
\end{gather*}
Now, by recalling the infinite-sum representation of $P$ and the fact that $\mathcal{Q}_K\succeq 0$, we conclude that $\lambdamax(P) \geq \lambdamax(\mathcal{Q}_K) \geq \lambdamin(\mathcal{Q}_K)$. Then, 
we proceed as follows: if $\lambdamax(P) > \lambdamin(\mathcal{Q}_K)$ then we choose $a = \Large\sfrac{\lambdamin(\mathcal{Q}_K)}{(2\lambdamax(P)- 2\lambdamin(\mathcal{Q}_K))} > 0$;
otherwise, we choose $a > 2\eta^2 \|BG\|_2^2$.
Either way, by comparing the minimum eigenvalues of both sides in \cref{eq:L-lowerbound},
\begin{gather*}
    \lambdamin(L) \geq \lambdamin(\mathcal{Q}_K)/2 
    - \left[{\large\sfrac{2\lambdamax(P)^2}{\lambdamin(\mathcal{Q}_K)}} - \lambdamax(P)\right] \eta^2 \|B G\|_2^2.
\end{gather*}
Therefore, if $|\eta| \leq \Large\sfrac{\lambdamin(\mathcal{Q}_K)}{(2\lambdamax(P) \|B G\|_2)}$,
then $L \succ 0$, hence completing the proof.
\end{proof}

\begin{remark}
The proceeding lemma also provides a \emph{conditioning} of the optimization problem in terms of system parameters $A, B$. In other words, for any choice of $\mathcal{Q}_K\succ 0$ at any $K \in \stableK$, the ratio ${\lambdamax\left(\lyap(A_{\mathrm{cl}}^\intercal,\mathcal{Q}_K)\right)}/{\lambdamin(\mathcal{Q}_K)}$ represents a condition number revealing geometric information on the manifold at $K$. In a sense, this ratio reflects the \emph{Riemannian curvature} of $(\stableK,\metric,\nabla)$;
this connection will be further explored in 
our future work.
\end{remark}

Next, we propose an algorithm with convergence guarantees with at least a linear rate (when the iterates are far from the local optima) and eventually a Q-quadratic rate (when the iterates are close enough to the local optima).
The complication here is that we do not have access to a retraction with a reasonable computational complexity (see \Cref{remark:geodesics}).
We claim that, starting close enough to a local minimum, a Newton-type method using Riemannian metric and the Euclidean/Riemannian connection must converge quadratically if one could have used the stepsize $\eta = 1$. This is in fact due to the exponential mapping with respect to the Euclidean connection that serves as a retraction with desirable properties.
However, the stability certificate suggests that at least away from the local minimum, it might not be possible to use such a large stepsize. Therefore, a stepsize rule has to be deduced--that in turn, hinges upon the stability certificate; the resulting algorithm is summarized in \Cref{alg:Controller}. 
Hereafter, we refer to the solution $G \in T_K \substableK$ of the following equation as the \textit{Newton direction} on $\substableK$:
\begin{gather*}
    \hess{h}_K[G] = -\grad{h}_K,
\end{gather*}
where $h = f|_{\footnotesize\substableK}$; similarly, 
when $\hess{h}$ is replaced by $\euchess{h}$,
the corresponding solution is referred to as the \textit{Euclidean Newton direction}.

\begin{algorithm}[!ht]
	\caption{\acf{ouralgo} for Constrained Problems on $\stableK$}
	\algorithmfootnote{{For examples of the mapping $\mathcal{Q}$ in Line 2 see \Cref{rem:Q-mapping}.} In Line 4, $\hess{h}$ can be replaced by its Euclidean counterpart $\euchess{h}$. The update in Line 7 is possible due to the linear structure of $\substableK$ induced by $\constraint$. For different choices of stopping criteria see \cite{talebi2022geometric}.\vspace{-0.2cm}}
	\begin{algorithmic}[1]
		\State \textbf{Initialization:} Problem parameters $(A,B)$,\newline the linear constraint $\constraint$ and an initial feasible stabilizing controller $K_0 \in \substableK = \stableK \cap \constraint$
		\State Choose a smooth mapping $K \mapsto \mathcal{Q}_K \succ 0$; set $t= 0$
		\State \textbf{Until stopping criteria are met, do}
		\State \hspace{5mm} Find the Newton direction $G_t$ on $\substableK$ satisfying
		    \[\hess{h}_{K_t}[G_t] = -\grad{h}_{K_t}\]
		\State \hspace{5mm} Use $\mathcal{Q}_{K_t}$ to obtain a stability certificate $\stabcer_{K_t}$ 
		\State \hspace{5mm} Compute step-size
		\(\eta_t = \min\left\{\stabcer_{K_t}, 1\right\} \)
		\State \hspace{5mm} Update:
		\(K_{t+1} = K_{t} + \eta_t G_t\)
		\State \hspace{5mm} $t \leftarrow t+1$
	\end{algorithmic}
	\label{alg:Controller}
\end{algorithm}

\subsubsection{Linear-quadratic convergence of \ac{ouralgo}}
In this section, we establish the local linear-quadratic convergence of \ac{ouralgo} on the submanifold $\substableK$ using differential geometric techniques \cite{lee2018introduction, absil2009optimization, gabay1982minimizing}. Herein, avoiding the exponential map induced by the Riemannian connection for updating the iterates, and 
instead relying the stability certificate,
adds another layer of complications for 
the convergence analysis.
To proceed, we say that $K^*$ is a critical point of $h$ if $\grad{h}_{K^*} = 0$; it is \emph{nondegenerate} if $\hess{h}_{K^*}$ is nondegenerate, \ie,
$\tensor{\hess{h}_{K^*}[G_1]}{G_2}{Y_{K^*}} = 0, \, \forall G_2 \in T_{K^*}\substableK$ implies that $G_1 = 0 \in T_{K^*}\substableK.$

\begin{lemma}\label[lemma]{lem:nondegenerate}
Suppose $K^*$ is a nondegenerate local minimum of $h \coloneqq f|_{\footnotesize\substableK}$. Then, it is isolated, $\grad{h}_{K^*} = 0$, and there exists a neighborhood of $K^*$ on which $\hess{h}$ is positive definite. Furthermore, $\hess{h}_{K^*} = \euchess{h}_{K^*}$.
\end{lemma}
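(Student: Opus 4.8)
The plan is to verify the four assertions in an order that exposes their logical dependencies: first that $\grad h_{K^*}=0$, then positive definiteness of $\hess h$ near $K^*$ together with isolation, and finally the coincidence $\hess h_{K^*}=\euchess h_{K^*}$. Throughout, $h\coloneqq f|_{\footnotesize\substableK}$ is smooth and $(\substableK,\submetric)$ is a Riemannian submanifold, so $\submetric$ is nondegenerate.

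\textbf{Gradient vanishes.} Since $K^*$ is a local minimum of $h$ on $\substableK$, I would use the first-order necessary condition: for any $V\in T_{K^*}\substableK$ pick a smooth curve $\gamma$ on $\substableK$ with $\gamma(0)=K^*$, $\gamma'(0)=V$; then $t\mapsto h(\gamma(t))$ has a local minimum at $t=0$, so $Vh=\tensor{V}{\grad h_{K^*}}{\metric}=0$. As $\submetric$ is nondegenerate and $V$ is arbitrary, $\grad h_{K^*}=0$.

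\textbf{Positive definiteness and isolation.} With $\grad h_{K^*}=0$, the Hessian at $K^*$ becomes a curve-intrinsic object: for any curve $\gamma$ as above, $\tfrac{d^2}{dt^2}h(\gamma(t))\big|_{0}=\tensor{\hess h_{K^*}[V]}{V}{\metric}+\tensor{\grad h_{K^*}}{a}{\metric}$ with acceleration $a\coloneqq\widetilde{\nabla}_{\gamma'}\gamma'|_{0}$, and the second term drops since $\grad h_{K^*}=0$. Being a local minimum forces $\tfrac{d^2}{dt^2}h(\gamma(t))\big|_{0}\ge 0$, hence $\hess h_{K^*}\succcurlyeq 0$; combined with the nondegeneracy hypothesis (trivial kernel) this upgrades to $\hess h_{K^*}\succ 0$. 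Because $\hess h[V]=\widetilde{\nabla}_V\grad h$ is a smooth, $\submetric$-self-adjoint $(1,1)$-tensor field---smooth by smoothness of $\grad h$ (\Cref{prop:extrinsic}) and of the Christoffel symbols (\Cref{lem:christoffel})---its smallest eigenvalue is continuous, so positive definiteness persists on an open neighborhood of $K^*$. For isolation, I would read $\grad h$ in a single chart as a smooth vector-valued map whose Jacobian at $K^*$ equals the matrix of $\hess h_{K^*}$ (the connection terms vanish because $\grad h_{K^*}=0$); nondegeneracy makes this Jacobian invertible, so the inverse function theorem shows $\grad h$ has no other zero nearby, whence $K^*$ is an isolated critical point and therefore an isolated local minimum.

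\textbf{Coincidence of the two Hessians.} This is the only non-routine step, and I would handle it at the level of operators rather than through the extrinsic decomposition of \Cref{prop:extrinsic}, whose Weingarten term $\wein_{\nproj(\grad f|_{\footnotesize\substableK})}$ need \emph{not} vanish at $K^*$ (there $\grad f|_{K^*}$ may be a nonzero \emph{normal} vector). The key observation is that $\hess h$ and $\euchess h$ differentiate the \emph{same} field $\grad h$ using two connections on $T\substableK$: the induced Riemannian connection $\widetilde{\nabla}=\tproj\nabla$ and its Euclidean counterpart $\widetilde{\Enabla}$ (the connection inducing $\euchess h$). Since the difference of any two connections is a $(1,2)$-tensor field $S$, with $\widetilde{\nabla}_V X-\widetilde{\Enabla}_V X=S(V,X)$ being $C^\infty(\substableK)$-bilinear and hence pointwise in its arguments, I obtain $\hess h[V]-\euchess h[V]=S(V,\grad h)$. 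Evaluating at $K^*$ and using $\grad h_{K^*}=0$ gives $S(V,0)=0$ for all $V$, so $\hess h_{K^*}=\euchess h_{K^*}$.

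I expect this last step to be the crux. The natural temptation is to derive it from the extrinsic formula, but the Riemannian second fundamental form does not vanish at $K^*$ when $\grad f|_{K^*}$ is a nonzero normal vector, which can obscure the identity. The clean resolution is to stay intrinsic to $\substableK$ and exploit that a difference of connections is tensorial in the field being differentiated, so it is annihilated exactly by $\grad h_{K^*}=0$; this also explains why the equality would hold for \emph{any} alternative connection, not merely the Euclidean one.
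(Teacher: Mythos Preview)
Your proof is correct and follows essentially the same route as the paper: first-order condition via curves for $\grad h_{K^*}=0$, second-derivative test to get $\hess h_{K^*}\succcurlyeq 0$ and then $\succ 0$ by nondegeneracy, continuity for the neighborhood, and---identically to the paper---the observation that the difference of the two connections is a $(1,2)$-tensor so that $\hess h_{K^*}[V]-\euchess h_{K^*}[V]=S(V,\grad h)|_{K^*}=0$. The only cosmetic differences are that the paper chooses $\gamma$ to be a geodesic (so the acceleration term is absent by construction rather than killed by $\grad h_{K^*}=0$) and cites Milnor's Morse lemma for isolation rather than invoking the inverse function theorem directly; your remark about why the extrinsic Weingarten decomposition is not the right tool for the last step is on point but not needed, since the paper also argues intrinsically.
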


\begin{theorem}\label[theorem]{thm:convergence}
Suppose $K^*$ is a nondegenerate local minimum of $h \coloneqq f|_{\footnotesize\substableK}$ over the submanifold $\substableK = \stableK \cap \constraint$ for some linear constraint $\constraint$. Then, there exists a neighborhood $\nghbd^* \subset \substableK$ of $K^*$ with the following property: whenever $K_0 \in \nghbd^*$, the sequence $\{K_t\}$ generated by \ac{ouralgo} remains in $\nghbd^*$ (therefore, it is stabilizing), and it converges to $K^*$ at least at a linear rate--and eventually--with a quadratic one.
\end{theorem}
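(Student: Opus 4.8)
The plan is to reduce the analysis to a classical inexact-Newton convergence argument carried out in the fixed linear coordinates supplied by $\constraint$, using the straight-line update $K_{t+1} = K_t + \eta_t G_t$ as a retraction. First I would record two consequences of the linear structure and of \Cref{lem:nondegenerate}. Because $\substableK = \stableK \cap \constraint$ with $\constraint$ affine, the tangent space $T_K \substableK$ equals the fixed linear part $\mathcal{V}$ of $\constraint$ at every $K$; hence $G_t \in \mathcal{V}$ forces $K_t + \eta_t G_t \in \constraint$ for any stepsize, so the linear constraint is preserved automatically. Feasibility in $\stableK$ is then exactly what the stability certificate buys: since $\eta_t = \min\{\stabcer_{K_t},1\} \le \stabcer_{K_t}$, \Cref{lem:stability-cert} (with the chosen $\mathcal{Q}$) gives $K_{t+1} \in \stableK$, so $K_{t+1} \in \substableK$. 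By \Cref{lem:nondegenerate}, after shrinking to a neighborhood $\nghbd^*$ of $K^*$ I may assume $\hess{h}$ is positive definite (so $G_t = -\hess{h}_{K_t}^{-1}[\grad{h}_{K_t}]$ is well defined and $\hess{h}_{K_t}^{-1}$ is uniformly bounded), $K^*$ is the unique critical point in $\nghbd^*$, and crucially $\hess{h}_{K^*} = \euchess{h}_{K^*}$.

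Next I would set $e_t \coloneqq K_t - K^*$, measured in a fixed ambient (Frobenius) norm, which is comparable to the varying Riemannian norm on the compact closure of $\nghbd^*$. Since $\grad{h}$ is a smooth vector field vanishing at $K^*$, its coordinate linearization there is the ordinary derivative $D(\grad{h})|_{K^*}$, which by definition of the Euclidean connection equals $\euchess{h}_{K^*}$, and by \Cref{lem:nondegenerate} equals $\hess{h}_{K^*}$. A Taylor expansion then gives $\grad{h}_{K_t} = \hess{h}_{K^*}[e_t] + O(\|e_t\|^2)$, while smoothness of $\hess{h}$ gives $\hess{h}_{K_t}^{-1} = (\hess{h}_{K^*})^{-1} + O(\|e_t\|)$. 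Composing these yields $\hess{h}_{K_t}^{-1}[\grad{h}_{K_t}] = e_t + O(\|e_t\|^2)$. This is the step where the identity $\hess{h}_{K^*} = \euchess{h}_{K^*}$ is indispensable: it is precisely what makes the approximate Jacobian $\hess{h}_{K_t}$ used in the Newton equation consistent with the true Jacobian $D(\grad{h})|_{K^*}$ of the gradient field under the straight-line retraction, and hence upgrades the rate from linear to quadratic in the full-step regime.

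With this estimate the recursion $e_{t+1} = e_t + \eta_t G_t = (1-\eta_t)e_t + \eta_t\,O(\|e_t\|^2)$ drives both claims. For the linear phase I would bound $\eta_t$ from below on $\nghbd^*$: from $\|G_t\| \le \|\hess{h}_{K_t}^{-1}\|\,\|\grad{h}_{K_t}\|$ the quantity $\|BG_t\|_2$ is bounded above on $\nghbd^*$, while $\lambdamin(\mathcal{Q}_{K_t})/\lambdamax(\lyap(A_{\mathrm{cl}}^\intercal,\mathcal{Q}_{K_t}))$ is bounded below by continuity, so $\stabcer_{K_t} \ge \stabcer_{\min} > 0$ and $\eta_t \ge \eta_{\min} \coloneqq \min\{\stabcer_{\min},1\} > 0$. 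Taking norms gives $\|e_{t+1}\| \le \big(1 - \eta_t(1 - C\|e_t\|)\big)\|e_t\|$, so on a ball of radius $r < 1/C$ this is a contraction with factor $\rho \le 1 - \eta_{\min}(1-Cr) < 1$; this simultaneously keeps $\{K_t\}$ inside $\nghbd^*$ (hence stabilizing) and yields the linear rate. For the eventual quadratic phase I would use that $\grad{h}_{K^*}=0$ forces $G_t \to 0$, so $\|BG_t\|_2 \to 0$ and $\stabcer_{K_t} \to \infty$; thus once $\|e_t\|$ is small enough that $\stabcer_{K_t} > 1$ the stepsize becomes $\eta_t = 1$, the linear term cancels, and $\|e_{t+1}\| \le C\|e_t\|^2$.

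The main obstacle I anticipate is certifying the approximation $\hess{h}_{K_t}^{-1}[\grad{h}_{K_t}] = e_t + O(\|e_t\|^2)$ rigorously rather than heuristically — in particular, controlling the Taylor remainder uniformly over $\nghbd^*$ and confirming that the identification $D(\grad{h})|_{K^*} = \euchess{h}_{K^*} = \hess{h}_{K^*}$ holds as maps on the fixed tangent subspace $\mathcal{V}$, which is where the linear structure of $\constraint$ (making $\mathcal{V}$ constant) together with \Cref{lem:nondegenerate} does the real work. The remaining pieces — the lower bound on $\stabcer_{K_t}$, the contraction keeping the iterates in $\nghbd^*$, and the transition to the $\eta_t=1$ regime — are then routine.
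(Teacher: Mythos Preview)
Your proposal is correct and is a genuinely different route from the paper's own proof. The paper works intrinsically: it fixes a neighborhood on which $m\,\rnorm{G}{K}^2 \le \tensor{\hess h_K[G]}{G}{Y_K} \le M\,\rnorm{G}{K}^2$, then controls $\rnorm{\grad h_{K_{t+1}}}{K_{t+1}}$ by integrating along the straight segment $\gamma(\eta)=K_t+\eta G_t$ using parallel transport of $\hess h$ (a Riemannian Lipschitz estimate), and separately relates $\rnorm{\grad h_K}{K}$ to the Riemannian distance $\dist(K,K^*)$ via a minimizing geodesic in $\substableK$. These two ingredients combine into the recursion
\[
\dist(K_{t+1},K^*) \le \tfrac{|1-\eta_t|M^2}{m^2}\,\dist(K_t,K^*) + \tfrac{\eta_t L M^2}{2m^3}\,\dist(K_t,K^*)^2,
\]
after which the linear and quadratic phases follow from a lower bound $\stabcer_{K_t}\ge c\,m/(M\,\dist(K_t,K^*))$. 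You instead work extrinsically in the fixed Euclidean coordinates of the linear slice $\mathcal V$: Taylor-expand $\grad h$ and $\hess h^{-1}$ about $K^*$, use the identity $D(\grad h)|_{K^*}=\euchess h_{K^*}=\hess h_{K^*}$ from \Cref{lem:nondegenerate} to make the Newton step consistent with the true linearization, and obtain $G_t=-e_t+O(\|e_t\|^2)$ directly. Your approach is shorter and makes explicit why the \emph{linearity} of $\constraint$ is what permits the straight-line update to act as a retraction compatible with the Riemannian Hessian; the paper's approach keeps the Riemannian norms and distances throughout, which is heavier here but is the template that would survive if one later replaced the linear constraint and Euclidean update by a nonlinear submanifold and a genuine retraction.
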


\begin{remark}
The above result implies that there exist neighborhoods containing each nondegenerate local minimum of the constrained cost function on which the convergence of \ac{ouralgo} is guaranteed. The usefulness of this result is that the initial iterate $K_0$ is not required to be in a (small) neighborhood of the optimum on which the step-size $\eta = 1$ is feasible. Instead, by carefully incorporating the stability certificate (\Cref{lem:stability-cert}), we can obtain a larger basin of attraction for the iterates (see \Cref{fig:landscape-olqr} in \S\ref{sec:simulation}). Finally, even though the convergence rate is initially linear, as the algorithm proceeds,
a quadratic convergence rate is achieved.
\end{remark}

\vspace{-0.4cm}
\section{Feedback Synthesis via \ac{ouralgo}}\label{sec:application}
In this section, we discuss applications of 
the developed methodology for optimizing the \ac{lqr} cost over two distinct submanifolds, namely
those induced by \acf{slqr} and \acf{olqr} problems.
Consider a discrete-time linear time-invariant dynamics 
\begin{align}\label{eq:dynamics}
    \x_{k+1} = & A \x_k + B \uu_k, \qquad \y_k = C \x_k,
\end{align}
where $A \in \Amatrices$, $B \in \Bmatrices$ and $C \in \Cmatrices$ are the system parameters for some integers $n$, $m$ and $d$; $\x_k$, $\y_k$ and $\uu_k$ denote the states, output and input vectors at time $k$, respectively, and $\x_0$ is given. Conventionally, the \acf{lqr} problem is to find the sequence $\uu = (\uu_k)_0^\infty \in \ell_2$ that minimizes the following quadratic cost
\begin{equation}\label{eq:costx0}
     \textstyle J_{\x_0}(\uu) = \sfrac{1}{2}\sum_{k=0}^\infty \x_k^\intercal Q \x_k +  \uu_k^\intercal R \uu_k,
\end{equation}
subject to \cref{eq:dynamics}, where $Q = Q^\intercal \succcurlyeq 0$ and $R = R^\intercal \succ 0$ are prescribed cost parameters. 
It is well known (see \eg, \S22.7 in \cite{goodwin2001control}) that the optimal state-feedback solution $\uu^*$ to this problem reduces to solving the \ac{dare} for the optimal cost matrix $P_{\mathrm{LQR}}$. This results in a linear state-feedback optimal control $\uu_k^*= K_{\mathrm{LQR}} \x_k$, where $K_{\mathrm{LQR}} \in \Kmatrices$ is the optimal \ac{lqr} gain (policy) obtained from $P_{\mathrm{LQR}}$. Furthermore, the associated optimal cost can be obtained as $J_{\x_0}(\uu^*) = \sfrac{1}{2} \, \x_0^\intercal P_{\mathrm{LQR}} \x_0$.

Naturally, one could think of the \ac{lqr} cost as a map $K \mapsto J_{\x_0}(\uu)|_{\uu=K \x}$; however, this would depend on $\x_0$ and generally, its value can still be finite while $K$ is not necessarily stabilizing (\ie, when $K \notin \stableK$). 
Instead, in order to avoid the dependency on the initial state while considering the constraints on the policy directly, we pose the following constrained optimization problem,
\vspace{-0.1cm}
\begin{align}\label{eq:constained-lqr-opt}
    \min_K f(K) & \coloneqq \textstyle \mathop{\mathbb{E}}_{\x_0 \sim \mathcal{D}}  J_{\x_0}(\uu) \\
   \text{s.t.}~~ \x_{k+1} &= A \x_k + B \uu_k, \; \uu_k = K \x_k, \, \forall k \geq0, \; K \in \substableK, \nonumber \vspace{-0.2cm}
\end{align}
where $\substableK$ is an embedded submanifold of $\stableK$, and $\mathcal{D}$ denotes a distribution of zero-mean multivariate random variables of dimension $n$ with covariance matrix $\Sigma_1$ so that $0 \prec \Sigma_1 = \Sigma_1^\intercal \in \Amatrices$.

Next, we can reformulate \cref{eq:constained-lqr-opt} as follows. For each stabilizing controller $K \in \stableK$, from \cref{eq:dynamics} and \cref{eq:costx0} we have that
\begin{gather*}
    J_{\x_0}(K\x) \textstyle =\frac{1}{2} \sum_{k=0}^\infty \x_0^\intercal (A_{\mathrm{cl}}^k)^\intercal [Q + K^\intercal R K] A_{\mathrm{cl}}^k \x_0,\vspace{-0.2cm}
\end{gather*}
where $A_{\mathrm{cl}} \coloneqq A + B K$. Since, $A_{\mathrm{cl}}$ is a stability matrix, the sum $\sum_{k=0}^\infty (A_{\mathrm{cl}}^k)^\intercal [Q + K^\intercal R K] A_{\mathrm{cl}}^k$ converges, which is equal to the unique solution $ P_K \coloneqq \lyap(A_{\mathrm{cl}}^\intercal, K^\intercal R K + Q)$.
Therefore, $f(K) =\frac{1}{2} \mathop{\mathbb{E}}_{\x_0 \sim \mathcal{D}} \tr{P_K \x_0 \x_0^\intercal} = \frac{1}{2} \tr{P_K \Sigma_1}$, and thus the problem in \cref{eq:constained-lqr-opt} reduces to
\begin{equation}\label{eq:constained-lqr-opt2}
    \min_K f(K)  = \textstyle \frac{1}{2} \tr{P_K \Sigma_1} \quad  \text{s.t. } \quad K \in \substableK.
\end{equation}

This reformulation of the \ac{lqr} cost function has been 
previously adopted in the literature (see \eg, \cite{fazel2018global, martensson2012gradient, bu2019lqr}) but the inherent geometry of the submanifold $\substableK$ has
generally been overlooked.
In the absence of constraints, \ie, $\substableK = \stableK$, Hewer's algorithm is known to converge to the optimal state feedback gain at a quadratic rate \cite{hewer1971iterative}, given controllability of $(A,B)$ and stability of the initial controller $K_0$--see \cite{talebi2021regularizability} for further study regarding the initial controller.
Otherwise, $\substableK$ may have disconnected components, and in general, the constrained cost function may have stationary points that are not local minima.
Nonetheless, in this section, we apply the techniques developed in \S\ref{sec:results} and  \Cref{alg:Controller} to the constraint arising in the well-known \ac{slqr} and \ac{olqr} problems. Note that both of these problems can be cast as an optimization in \cref{eq:constained-lqr-opt2} with $\substableK$ denoting a specific submanifold of $\stableK$ that will be further discussed in \S\ref{subsec:slqr} and \S\ref{subsec:olqr}, respectively.

\subsection{Solving for the Newton direction}\label{subsec:newton-direction}
In order to solve for the Newton direction at any $K \in \substableK$, suppose that the tuple $(\widetilde{\partial}_{(p,q)}|_{(p,q) \in D})$ denotes a smooth local frame for $\substableK$ on a neighborhood of $K$, where $D$ is a subset of $[m]\times [n]$ depending on the dimension of $\substableK$.\footnote{Each $T_K \substableK$ can be viewed as a subspace of $T_K \stableK$ as $\substableK \subset \stableK$ is embedded.}
In fact, by \Cref{prop:extrinsic}, the Newton direction $G= [G]^{k,\ell} \widetilde{\partial}_{(k,\ell)}|_K \in T_K\substableK$ (interpreted as a subspace of $T_K \stableK$) can be computed by solving the following system of $|D|$-linear equations (for each index $(p,q) \in D$),
\begin{gather*}
    \sum_{(k,\ell) \in D} [G]^{k,\ell} \; h_{;(k,\ell)(p,q)}(K) = - \tensor{\tproj(\grad{f}|_K)}{\widetilde{\partial}_{(p,q)}|_K}{Y_K},
\end{gather*}
where $h_{;(k,\ell)(p,q)}$ denote the coordinates of $\widetilde{\nabla}^2 h$ with respect to the local coframe dual to $(\widetilde{\partial}_{(p,q)}|_{(p,q) \in D})$ (in tensor notation; \eg, see \cite[Example 4.22]{lee2018introduction}). Thus, by \cref{eq:covariantHessian}, $h_{;(k,\ell)(p,q)}(K) = \tensor{\hess{h}_K[\widetilde{\partial}_{(k,\ell)}|_K]}{\widetilde{\partial}_{(p,q)}|_K}{Y_K};$
or with $\hess{h}$ replaced by $\euchess{h}$, depending on the connection.

\subsubsection{Analysis for the special cost function}
We now turn our attention towards the analysis of the following cost function specific to optimal control problems--see \cite{talebi2022policy} for proofs of the results in this subsection. In order to specialize 
the results obtained so far to this case, we set $\Sigma_2 = 0$ in \cref{eq:sigmaK}.

\begin{proposition}\label[proposition]{prop:lqrcost}
On the Riemannian manifold $(\stableK,\metric,\nabla)$, define $f \in C^\infty(\stableK)$ with $f(K) = \frac{1}{2} \tr{P_K \Sigma_1}$, where $P_K = \lyap(A_{\mathrm{cl}}^\intercal, K^\intercal R K + Q)$. Then, $f$ is smooth and under the usual identification of the tangent bundle 
\begin{gather*}
    \grad{f}_K = R K + B^\intercal P_K A_{\mathrm{cl}}.
\end{gather*}
Furthermore, $\hess{f}$ and $\euchess{f}$ are both self-adjoint operators such that, for any $E,F \in T_K \stableK$,
\begin{align*}
    &\tensor{\hess{f}_K[E]}{F}{Y_K}= \tensor{ B^\intercal (S_K[F]) A_{\mathrm{cl}}}{E}{Y_K}\\
    &\qquad\qquad +\tensor{(R+B^\intercal P_K B) E + B^\intercal (S_K[E])A_{\mathrm{cl}}}{F}{Y_K} \\
    &\qquad\qquad - \tensor{\grad{f}_K}{[E]^{k,\ell}\, [F]^{p,q}\; \Gamma^{i,j}_{(k,\ell)(p,q)}(K) \partial_{i,j}}{Y_K},\\
    & \tensor{\euchess{f}_K[E]}{F}{Y_K} = \tensor{ B^\intercal (S_K[F]) A_{\mathrm{cl}}}{E}{Y_K}\\
    &\qquad\qquad +\tensor{(R+B^\intercal P_K B) E + B^\intercal (S_K[E])A_{\mathrm{cl}}}{F}{Y_K},
\end{align*}
with $\Gamma^{i,j}_{(k,\ell)(p,q)}$ denoting the Christoffel symbols of $g$ and
\begin{equation*}
    S_K[E] \coloneqq \lyap(A_{\mathrm{cl}}^\intercal, E^\intercal \grad{f}_K + (\grad{f}_K)^\intercal E).
\end{equation*}
\end{proposition}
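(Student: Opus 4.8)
The plan is to proceed in three stages—smoothness, the gradient, then the two Hessians—using the differential of the Lyapunov map (\Cref{lem:dlyap}) and its \lyaptrace property as the workhorses, and keeping in mind that with $\Sigma_2 = 0$ the metric matrix is $Y_K = \lyap(A_{\mathrm{cl}},\Sigma_1)$. Smoothness of $f$ is immediate: $K \mapsto (A_{\mathrm{cl}}^\intercal,\, K^\intercal R K + Q)$ is polynomial, hence smooth into $\Astable \times \Amatrices$, and composing with the smooth map $\lyap$ and the trace yields $f \in C^\infty(\stableK)$.

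For the gradient, I would compute the directional derivative $Ef = df_K[E] = \tfrac12 \tr{dP_K[E]\,\Sigma_1}$ for arbitrary $E \in T_K\stableK$. Applying \Cref{lem:dlyap} at base point $(A_{\mathrm{cl}}^\intercal,\,K^\intercal R K + Q)$ with increment $(E^\intercal B^\intercal,\, E^\intercal R K + K^\intercal R E)$ gives $dP_K[E] = \lyap(A_{\mathrm{cl}}^\intercal,\, E^\intercal B^\intercal P_K A_{\mathrm{cl}} + A_{\mathrm{cl}}^\intercal P_K B E + E^\intercal R K + K^\intercal R E)$. The \lyaptrace property then moves $\Sigma_1$ inside, turning $\tr{dP_K[E]\,\Sigma_1}$ into $\tr{Y_K(\cdots)}$; cyclicity together with symmetry of $P_K, R, Y_K$ collapses the four terms pairwise, giving $df_K[E] = \tr{E^\intercal(R K + B^\intercal P_K A_{\mathrm{cl}})\,Y_K} = \tensor{E}{R K + B^\intercal P_K A_{\mathrm{cl}}}{Y_K}$. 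Since this holds for all $E$ and $\metric$ is nondegenerate, $\grad{f}_K = R K + B^\intercal P_K A_{\mathrm{cl}}$.

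For the Hessians I would use the second-covariant-derivative characterization $\nabla^2 f(E,F) = E(Ff) - (\nabla_E F)f$, which is symmetric for any torsion-free connection and, for the metric-compatible Riemannian connection, agrees with $\tensor{\hess{f}_K[E]}{F}{Y_K}$; the Euclidean form is obtained by replacing $\nabla$ with $\Enabla$. Taking $E,F$ as constant coordinate fields (so $\Enabla_E F = 0$ and $\nabla_E F$ reduces to the Christoffel correction of \Cref{lem:christoffel}), I first differentiate the gradient field to get $\Enabla_E \grad{f} = (R + B^\intercal P_K B)E + B^\intercal dP_K[E] A_{\mathrm{cl}}$, and observe from the expression for $dP_K[E]$ above that its inner matrix is exactly $E^\intercal \grad{f}_K + (\grad{f}_K)^\intercal E$, whence $dP_K[E] = S_K|_E$. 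Expanding $E(Ff) = E\tensor{F}{\grad{f}}{Y}$ by the product rule produces a gradient-derivative piece equal to $\tensor{(R + B^\intercal P_K B)E + B^\intercal (S_K|_E)A_{\mathrm{cl}}}{F}{Y_K}$ and a metric-derivative piece $\tr{(\grad{f}_K)^\intercal F\, dY_K[E]}$. The crux is to show the latter equals $\tensor{B^\intercal (S_K|_F)A_{\mathrm{cl}}}{E}{Y_K}$: substituting $dY_K[E] = \lyap(A_{\mathrm{cl}},\, B E Y_K A_{\mathrm{cl}}^\intercal + A_{\mathrm{cl}} Y_K E^\intercal B^\intercal)$ from \Cref{lem:dlyap}, applying the \lyaptrace property, and splitting $S_K|_F$ into its two transposed halves, both sides reduce to $\tr{\lyap(A_{\mathrm{cl}}^\intercal,\, (\grad{f}_K)^\intercal F)\,(B E Y_K A_{\mathrm{cl}}^\intercal + A_{\mathrm{cl}} Y_K E^\intercal B^\intercal)}$. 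This simultaneously yields the Euclidean formula, and adding back $-(\nabla_E F)f = -\tensor{\grad{f}_K}{[E]^{k,\ell}[F]^{p,q}\Gamma^{i,j}_{(k,\ell)(p,q)}(K)\,\partial_{i,j}}{Y_K}$ gives the Riemannian one.

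Self-adjointness is then read off: the Riemannian bilinear form is symmetric because $\nabla$ is torsion-free and metric-compatible, while the Euclidean form is manifestly symmetric since the metric-derivative piece contributes the symmetric pair $\tensor{B^\intercal (S_K|_E)A_{\mathrm{cl}}}{F}{Y_K} + \tensor{B^\intercal (S_K|_F)A_{\mathrm{cl}}}{E}{Y_K}$. I expect the main obstacle to be precisely the metric-derivative identity $\tr{(\grad{f}_K)^\intercal F\, dY_K[E]} = \tensor{B^\intercal (S_K|_F)A_{\mathrm{cl}}}{E}{Y_K}$: this is the step where the varying metric $Y_K$ enters (the term absent from a purely Euclidean Hessian), and matching the two sides requires careful, repeated use of the \lyaptrace property together with the transpose identity $\lyap(A^\intercal,Z)^\intercal = \lyap(A^\intercal,Z^\intercal)$.
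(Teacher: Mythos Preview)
Your proposal is correct and follows essentially the same approach as the paper: both use the identity $\tensor{\hess f[E]}{F}{Y} = E(Ff) - (\nabla_E F)f$ with $E,F$ extended as constant-coordinate fields, compute $E(Ff)$ via \Cref{lem:dlyap} and the \lyaptrace property, and then subtract the Christoffel correction. The only organizational difference is that the paper first rewrites $r(K)=Ff=\tensor{\grad f}{F}{Y}$ via \lyaptrace as $\Psi\circ\lyap\bigl(A_{\mathrm{cl}}^\intercal,\,(\grad f_K)^\intercal F+F^\intercal\grad f_K\bigr)$ and differentiates the composite through an auxiliary map $\Xi$, whereas you apply the product rule directly to $\tr{(\grad f_K)^\intercal F\,Y_K}$ and isolate the metric-derivative piece; the underlying manipulations coincide.
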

\begin{remark}\label{rem:hewer}
{For comparison, in the absence of any constraint (\ie, when $\substableK=\stableK$), the Hewer's update $K_{t+1} = -(B^\intercal P_{K_t} B + R)^{-1}B^\intercal P_{K_t} A$ in \cite{hewer1971iterative} can be written as 
\[K_{t+1} = K_t + \widehat{G}_t,\] 
with the Riemannian quasi-Newton direction $\widehat{G}_t$ satisfying,
\[\widehat{H}_{K_t}[\widehat{G}_t] = - \grad{f}_{K_t}\]
where $\widehat{H}_{K_t}: = (R+B^\intercal P_{K_t} B)$ is a positive definite approximation of $\hess{f}_{K_t}$ and $\euchess{f}_{K_t}$.
The \emph{algebraic coincidence} is that the unit stepsize remains stabilizing throughout these quasi-Newton updates. In general, and particularly on constrained submanifolds $\substableK$, one needs to  instead utilize
the stability certificate developed in \Cref{lem:stability-cert}.}
\end{remark}

As a consequence of \Cref{prop:extrinsic}, the next corollary is the analogue of \Cref{prop:lqrcost} on any submanifold of $\stableK$.

\begin{corollary}\label[corollary]{cor:lqrcost-constrained}
Under the premise of \Cref{prop:lqrcost}, let $h = f|_{\footnotesize\substableK}$, where $\substableK \subset \stableK$ is an embedded Riemannian submanifold with the induced connection. Then, $h$ is smooth and under the usual identification of the tangent bundle 
\[\grad{h}_K = \tproj(R K + B^\intercal P_K A_{\mathrm{cl}}).\]
Furthermore, $\hess{h}$ is a self-adjoint operator and can be characterized as follows: for any $E,F \in T_K \substableK \subset T_K \stableK$,
\begin{multline*}
    \tensor{\hess{h}_K[E]}{F}{Y_K}= \tensor{ B^\intercal (S_K[F]) A_{\mathrm{cl}}}{E}{Y_K}\\
    +\tensor{(R+B^\intercal P_K B) E + B^\intercal (S_K[E])A_{\mathrm{cl}}}{F}{Y_K} \\
    - \tensor{\grad{h}_K}{[E]^{k,\ell}\, [F]^{p,q} \;\Gamma^{i,j}_{(k,\ell)(p,q)}(K) \partial_{i,j}}{Y_K},
\end{multline*}
where $\Gamma^{i,j}_{(k,\ell)(p,q)}$ are the Christoffel symbols of $g$ and
\begin{equation*}
    S_K[E] \coloneqq \lyap \big(A_{\mathrm{cl}}^\intercal, E^\intercal \grad{f}_K + (\grad{f}_K)^\intercal E \big).
\end{equation*}
\end{corollary}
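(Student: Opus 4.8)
The plan is to obtain both formulas by specializing the abstract extrinsic identities of \Cref{prop:extrinsic} to the explicit gradient and Hessian of the \ac{lqr} cost furnished by \Cref{prop:lqrcost}. The gradient is immediate: \Cref{prop:extrinsic} gives $\grad{h} = \tproj(\grad{f}|_{\footnotesize\substableK})$, so substituting $\grad{f}_K = R K + B^\intercal P_K A_{\mathrm{cl}}$ yields $\grad{h}_K = \tproj(R K + B^\intercal P_K A_{\mathrm{cl}})$. Throughout, embeddedness of $\substableK$ is what makes $T_K\substableK \subset T_K\stableK$ and the projections $\tproj,\nproj$ meaningful.

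For the Hessian I would start from the extrinsic identity $\hess{h}[E] = \tproj(\hess{f}[E]|_{\footnotesize\substableK}) + \wein_{\nproj(\grad{f}|_{\footnotesize\substableK})}[E]$ and pair it against an arbitrary $F \in T_K\substableK$. Since $F$ is tangent, the outer tangential projection is transparent, $\tensor{\tproj(\hess{f}[E])}{F}{Y_K} = \tensor{\hess{f}[E]}{F}{Y_K}$, so the $\hess{f}$-contribution is verbatim the expression from \Cref{prop:lqrcost}: the two ``Euclidean'' terms $\tensor{B^\intercal(S_K|_F)A_{\mathrm{cl}}}{E}{Y_K}$ and $\tensor{(R+B^\intercal P_K B)E + B^\intercal(S_K|_E)A_{\mathrm{cl}}}{F}{Y_K}$ (which together are exactly $\tensor{\euchess{f}[E]}{F}{Y_K}$ and hence restrict directly to tangent vectors), plus the Christoffel term $-\tensor{\grad{f}_K}{[E]^{k,\ell}[F]^{p,q}\Gamma^{i,j}_{(k,\ell)(p,q)}(K)\partial_{i,j}}{Y_K}$.

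The crux is to evaluate the Weingarten term and fuse it into the Christoffel term. Writing $N = \nproj(\grad{f})$ and using its defining property $\tensor{\wein_N[E]}{F}{Y_K} = \tensor{N}{\nproj(\nabla_E F)}{Y_K}$, normality of $N$ lets me drop the inner $\nproj$, giving $\tensor{N}{\nabla_E F}{Y_K}$. I would then extend $E$ and $F$ to ambient vector fields with \emph{constant} coefficients in the global frame $(\partial_{i,j})$; since $\wein_N$ is a bundle homomorphism the value is extension-independent, and for these extensions the coefficient-derivative part of $\nabla_E F$ vanishes identically, leaving $\nabla_E F|_K = [E]^{k,\ell}[F]^{p,q}\Gamma^{i,j}_{(k,\ell)(p,q)}(K)\partial_{i,j}$. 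Hence the Weingarten term equals $\tensor{\nproj(\grad{f}_K)}{[E]^{k,\ell}[F]^{p,q}\Gamma^{i,j}_{(k,\ell)(p,q)}(K)\partial_{i,j}}{Y_K}$.

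Adding this to the Christoffel term of $\hess{f}$ then produces $-\tensor{\grad{f}_K - \nproj(\grad{f}_K)}{\cdots}{Y_K} = -\tensor{\tproj(\grad{f}_K)}{\cdots}{Y_K} = -\tensor{\grad{h}_K}{\cdots}{Y_K}$, exactly replacing $\grad{f}_K$ by $\grad{h}_K$ while the Euclidean terms stay unchanged. Self-adjointness of $\hess{h}$ then follows since $\hess{f}$ is self-adjoint (\Cref{prop:lqrcost}), $\tproj$ is the metric-orthogonal projection, and $\wein_N$ is self-adjoint by construction, so $\hess{h}$ is a sum of self-adjoint maps on $T_K\substableK$. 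I expect the main obstacle to be the careful justification of the constant-coefficient extension in the Weingarten step---namely, invoking tensoriality to argue that the non-Christoffel part of $\nabla_E F$ genuinely drops out and that the coordinate Christoffel expression is recovered---which is precisely where the interplay between $\tproj$, $\nproj$, and the identity $\grad{h} = \tproj\grad{f}$ is indispensable.
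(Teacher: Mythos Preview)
Your proposal is correct and follows essentially the same route as the paper: apply \Cref{prop:extrinsic} for both the gradient and the extrinsic Hessian decomposition, evaluate the Weingarten term via its defining identity with constant-coefficient extensions (so that $\nabla_E F|_K$ reduces to the Christoffel expression), and combine it with the Christoffel term of $\hess{f}$ to replace $\grad{f}_K$ by $\tproj\grad{f}_K=\grad{h}_K$. The paper does exactly this, only phrasing the $\hess{f}$-contribution via the intermediate form $Er-\tensor{\grad{f}_K}{\nabla_U W|_K}{Y_K}$ from the proof of \Cref{prop:lqrcost} rather than the final displayed formula; the content is identical.
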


\begin{remark}\label{rem:Q-mapping}
If $Q \succ 0$, then we can choose the mapping $\mathcal{Q}\colon K\mapsto \mathcal{Q}_K$ to be $\mathcal{Q}_K = Q + K^\top R K$, and thus the stability certificate $\stabcer_K$ as defined in \Cref{lem:stability-cert} satisfies,
\begin{gather*}
    \stabcer_K \geq \Large\sfrac{\lambdamin(Q) \lambdamin(\Sigma_1)}{(4 f(K) \, \|BG\|_2)},
\end{gather*}
where $f$ denotes the \ac{lqr} cost.
This is due to the fact that $R, Q, \Sigma_1 \succ 0$ and so is $ P_K \succ 0$; hence, by the trace inequality,
\(f(K) \geq (\sfrac{1}{2}) \lambdamin(\Sigma_1) \lambdamax(P_K).\)
The claimed lower-bound on the stability certificate then follows by combining the last inequality with the definition of the stability certificate.
Otherwise if $Q \succeq 0$, one can leverage observability of the pair $(A, Q^{1/2})$ to derive analogous results.
\end{remark}

\subsubsection{State-feedback \ac{slqr}} \label{subsec:slqr}
Any desired sparsity pattern on the controller gain $K$ imposes a linear constraint set, denoted by $\constraint_D$, which indicates a linear subspace of $\Kmatrices$ with nonzero entries only for a prescribed subset $D$ of entries, \ie, for any $K \in \constraint_D$ and $(i,j) \notin D$ we must have $[K]_{i,j}=0$. Then,
$\substableK = \stableK \cap \constraint_D$ is a properly embedded submanifold of dimension $|D|$.
Furthermore, at any point $K \in \substableK$ and for any tangent vector $E \in T_K \stableK$, we can compute the tangential projection $\tproj\colon T_K\stableK \mapsto T_K\substableK$ as the unique solution of 
\vspace{-0.1cm}
\begin{equation}\label{eq:tproj-linear}
    \eucproj_{\constraint_D}\big[(E-\widetilde{E}^*)Y_K \big]= 0,
\end{equation}
where $\eucproj_{\constraint_D}$ denotes the Euclidean projection onto the sparsity pattern $\constraint_D$. Note that at each $K \in \substableK$, the last equality consists of $|D|$ nontrivial linear equations involving $|D|$ unknowns (as the nonzero entries of $\widetilde{E}^*$), which can be solved efficiently.
Finally, if $\widetilde{\partial}_{(i,j)}$ (as described in \S\ref{subsec:newton-direction}) is taken to be $\widetilde{\partial}_{(i,j)} = \partial_{(i,j)}$ for $(i,j) \in D$, then $(\widetilde{\partial}_{(i,j)}|_{(i,j) \in D})$ forms a global smooth frame for $T\substableK$.
Thus, for each $(k,\ell),(p,q) \in D$, the coordinates $h_{;(k,\ell)(p,q)}(K)$ simplifies to
\vspace{-0.1cm}
\begin{align*}
    &\hspace{-0.9cm}h_{;(k,\ell)(p,q)}(K) = \tensor{ B^\intercal (S_K[\partial_{(p,q)}]) A_{\mathrm{cl}}}{\partial_{(k,\ell)}}{Y_K}\nonumber\\
    \hspace{0.9cm}&+\tensor{(R+B^\intercal P_K B) \partial_{(k,\ell)} + B^\intercal (S_K[\partial_{(k,\ell)}])A_{\mathrm{cl}}}{\partial_{(p,q)}}{Y_K} \nonumber\\
    \hspace{0.9cm}&- \tensor{\tproj\grad{f}_K}{\Gamma^{i,j}_{(k,\ell)(p,q)}(K) \partial_{i,j}}{Y_K}.
\end{align*}

\subsubsection{\acf{olqr}}\label{subsec:olqr}
The \ac{olqr} problem can be formulated as the optimization problem in \cref{eq:constained-lqr-opt} with the submanifold $\substableK = \stableK \cap \constraint_C$, where the constraint set $\constraint_C$ is defined as
\begin{gather*}
    \constraint_C \coloneqq \big\{K \in \Kmatrices \;|\; K = L C, \; L \in \Lmatrices \big\},
\end{gather*}
and $C \in \Cmatrices$ is the prescribed output matrix. For simplicity of presentation, we suppose that $C$ has full rank equal to $d\leq n$. Then, 
$\substableK = \stableK$ is a properly embedded submanifold of $\stableK$ with dimension $md$. Also, we can canonically identify each tangent space at $K \in \substableK$ with $T_K\substableK \cong \constraint_C$. Finally, 
at any $K \in \substableK$ and for any $E \in T_K \stableK$, the tangential projection of $E$ is $\tproj E = L^* C$ with $L^* \in \Lmatrices$ being the unique solution of the following linear equation
\begin{equation}\label{eq:tangent_olqr}
    L^* CY_K C^\top = E Y_K C^\top.
\end{equation}

\subsubsection{Complexity of \ac{ouralgo}}
\iftoggle{arXiv-version}{
The building block of our algorithm is solving \ac{dare}s; using the Bartels–Stewart algorithm for solving Sylvester equations, we have a complexity of $\mathcal{O}(n^3)$ for solving each \ac{dare}.
In the worst case, solving the Newton direction requires the Hessian coefficients, a tangential projection, Christoffel symbols, and the system of $|D|$-linear equations.
The complexity of solving a system of $n$-linear equations and a $n\times n$ matrix multiplication are both $\mathcal{O}(n^3)$ (without relying on more sophisticated algorithms). By symmetries of $h_{;(k,\ell)(p,q)}$, these require solving $|D|$ number of \ac{dare}s. Hence, both the tangential projection and the system of linear equations require $\mathcal{O}(|D|^3)$ operations, and the Hessian coefficients require $\mathcal{O}(n^3|D|)$ operations.}{}

The computational complexity of \ac{ouralgo} at each iteration is determined by the complexity of solving for the Newton direction $G$ as the stability certificate $\stabcer_K$ involves solving only one \ac{dare} within $\mathcal{O}(n^3)$ operations---say using the Bartels-Stewart algorithm. 
Thus, each iteration of QRNPO with $\overline{\mathrm{Hess}}$ has a rudimentary computational complexity of $\mathcal{O}(n^3|D| + |D|^3 + n^3) \approx \mathcal{O}(n^4m + n^3m^3)$ for the largest possible $|D| = n m$.  On the other hand,  QRNPO with $\mathrm{Hess}$ requires additional computations of approximately $|D|$ number of \ac{dare}s for the Christoffel symbols, but resulting in the same complexity. Finally, we note that efficient computation of the Christoffel symbols for these problems can benefit from additional structures such as sparsity; see \cite{talebi2022policy} for further discussion. 
\section{Numerical Results \label{sec:simulation}}
In this section, we provide numerical examples for optimizing the \ac{lqr} cost over submanifolds induced by \ac{slqr} and \ac{olqr} problems.
Recall that, for each of these problems, we can compute the coordinate functions of the covariant Hessian $h_{;(k,\ell)(p,q)}(K)$ with respect to the corresponding coordinate frame described in previous subsections. Therefore, finding the Newton direction $G$ at any point $K\in\substableK$ reduces to solving the system of linear equations for the unknowns $[G]^{k,\ell}$, as described in \S\ref{subsec:newton-direction}, and forming the Newton direction as $G = [G]^{k,\ell} \widetilde{\partial}_{(k,\ell)}|_K \in T_K\substableK$.

For each of \ac{slqr} and \ac{olqr} problems, we have simulated three different algorithms, the first two are the variants of \ac{ouralgo} where we use Riemannian connection or Euclidean connection to compute $\hess{h}$ or $\euchess{h}$, respectively. 
Note that although $\hess{h}_{K^*} = \euchess{h}_{K^*}$ whenever $\grad{h}_{K^*} = 0$ (as shown in \Cref{lem:nondegenerate}), 
this would not necessarily be the case where $\grad{h}$ does not vanish; therefore, we expect $\hess{h}$ and $\euchess{h}$ to contain distinct
information on neighborhoods of isolated local minima, that 
directly influence the performance of \ac{ouralgo} as will be discussed below.
The third algorithm is the \acf{pg} as studied in \cite{bu2019lqr}.
\ac{pg} is feasible for constraints in our examples as, under relevant assumptions, one is able to perform \ac{pg} updates by having access to merely the projection onto linear subspace of matrices--see for example~\cite[Theorem 7.1]{bu2019lqr}. Note that the Lipschitz estimate provided in \cite[Lemma 7.9]{bu2019lqr} is conservative.
Here, instead we choose a larger constant step size which generates stabilizing iterates and improves the performance of \ac{pg}--but not the convergence rate which remains sublinear.
Finally, for comparison, we also implement the \ac{npg} algorithm with the same constant stepsize where, comparing to \ac{pg}, the Euclidean projected gradient is replaced by the tangential projection of the Riemannian gradient.
Note that, both \ac{pg} and \ac{npg} algorithms have similar (initial) sublinear rates \cite{fatkhullin2020optimizing,bu2019lqr}; as such, despite their respective progress at the onset of the iterates, these algorithms--without further stepsize adjustment--become rather slow over time and not practically convergent. The code
for generating theses results can be found at  \cite{Talebi_Quasi_Riemannian_Newton_2022}.
\begin{example}[Trajectories of \ac{ouralgo} using $\hess$ versus $\euchess$]\label{exp:2}
In order to illustrate how the performance of \ac{ouralgo} is different in terms of using the Riemannian connection ($\hess{h}$) versus Euclidean connection ($\euchess{h}$), we consider an example with system parameters
\(\left(A|B|Q|R|\Sigma\right) = {\small \left(\begin{array}{cc|cc|cc|cc|cc}
                0.8 & 1.0 & 0.0 & 1.0 & 10.0 &  0.0 & 0.1 & 0.0 & 1.0 & 0.0\\
                0.0 & 0.9 & 1.0 & 0.0 & 0.0  &  0.5 & 0.0 & 0.1 & 0.0 & 5.0\end{array} \right)}.\)
We run \ac{ouralgo} and \ac{pg} algorithms for both \ac{slqr} and \ac{olqr} problems involving two decision variables, so that we can plot the trajectories of the iterates over the level curves of the associated cost functions from different initial conditions (as illustrated in
\Cref{fig:landscape-slqr} and \Cref{fig:landscape-olqr}, respectively). 
In this example, the stopping criterion for \ac{ouralgo} is when the error of iterates from optimality is below a small tolerance ($10^{-12}$); unless the Hessian fails to be positive definite for \ac{ouralgo} using $\euchess$, we run the algorithm for a fixed number of iterations. Since \ac{pg} does not practically converge even with large number of iterates due to its sublinear convergence rate, it is terminated when \ac{ouralgo}, using $\hess$, has converged.

In reference to \Cref{fig:landscape-slqr}, we first note that \ac{ouralgo} with $\euchess$ does not converge if initialized away from the local minimum (and away from the line $l_2 = -1$) as the Euclidean Hessian fails to be positive definite therein (see \Cref{fig:motivation}). %
On the other hand, \ac{ouralgo} with $\hess$ successfully captures the inherent geometry of the problem and converges from all initializations. 
These observations exemplify how \ac{ouralgo} can exploit the connection compatible with the metric (inherent to the cost function) in order to provide more effective iterate updates.
Second, the square marker on each trajectory of \ac{ouralgo} indicates the first time stepsize $\eta_t = 1$ is guaranteed to be stabilizing (\ie, $\stabcer_{K_t}\geq 1$). It can be seen that the neighborhood of the local minimum (zoomed in)--on which the identity stepsize is possible--is relatively small. Whereas, by using the stability certificate, the specific choice of stepsize adopted here enables \ac{ouralgo} to handle initialization further away from the local minimum.
\begin{figure}[t]
     \centering
     \includegraphics[trim=1.5 0 2cm 1.2cm, clip, width =0.5\textwidth]{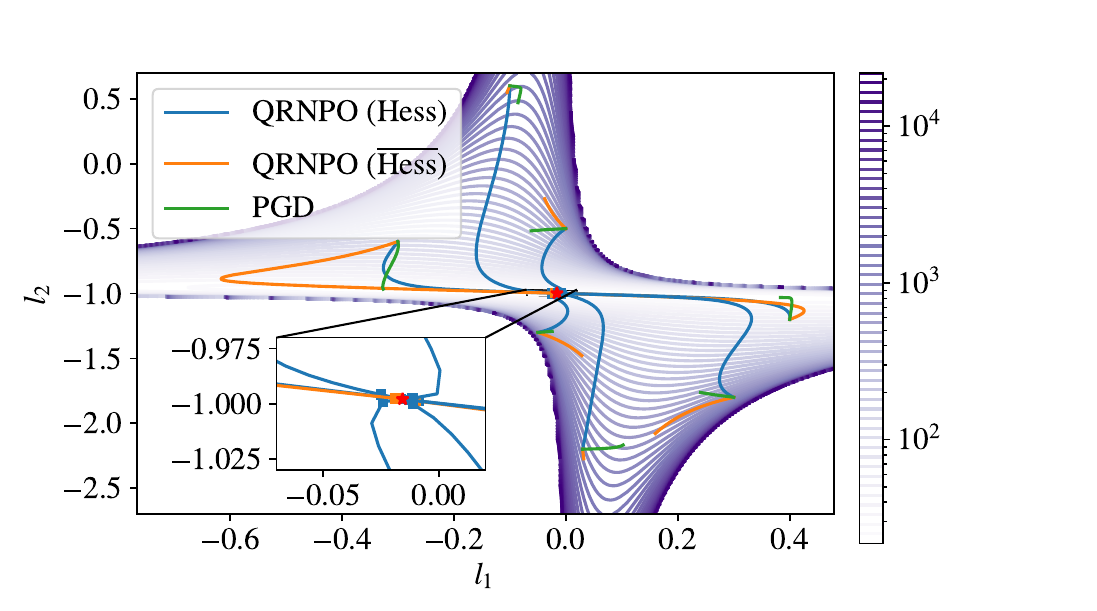}
     \caption{\small The trajectories of iterates 
     $K = \mathrm{diag}(l_1,l_2)$ generated by \ac{ouralgo} (with $\hess{h}$ and $\euchess{h}$) and \ac{pg}--from different initial points--for the \ac{slqr} problem with constraint $ D = \{(1,1),(2,2)\}$, over the level curves of $f$ in \cref{eq:constained-lqr-opt}.}
     \label{fig:landscape-slqr}
\vspace{-0.1cm}
\end{figure}%
\begin{figure}[t]
     \centering
     \includegraphics[trim=1.5 0 2cm 1.2cm, clip, width =0.5\textwidth]{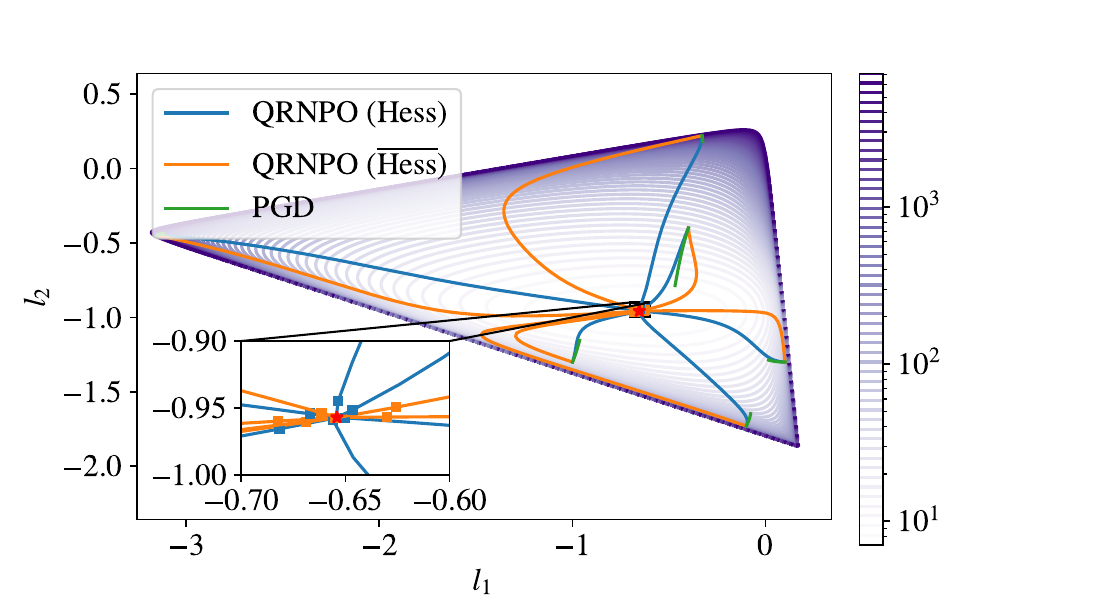}
     \caption{\small The trajectories of iterates $L = \left(\begin{array}{cc}l_1 & l_2 \end{array}\right)^\intercal$ generated by \ac{ouralgo} (with $\hess{h}$ and $\euchess{h}$) and \ac{pg}--from different initial points--for the \ac{olqr} problem with output matrix $ C = \left(\begin{array}{cc}1.0& 1.0 \end{array}\right)$, over the level curves of $f$ in \cref{eq:constained-lqr-opt}.}
     \label{fig:landscape-olqr}
\vspace{-0.3cm} \end{figure}
Next, notice that in both Figures~\ref{fig:landscape-slqr} and \ref{fig:landscape-olqr}, the trajectories of \ac{ouralgo} with $\hess$ are much more favorable in comparison to \ac{ouralgo} with $\euchess$, particularly, when initialized from points further away from the local minimum and closer to the boundary.
Additionally, similar to \Cref{fig:landscape-slqr}, the region on which the unit stepsize is guaranteed to be stabilizing is relatively small in \Cref{fig:landscape-olqr}.
\end{example}

\begin{example}[Randomly selected system parameters]\label{exp:3}
Next, we consider an example with $n=6$ number of states and  $m=3$ number of inputs, and simulate the behavior of \ac{ouralgo} and \ac{pg} for 100 randomly sampled system parameters. Particularly, the parameters $(A,B)$ are sampled from a zero-mean unit-variance normal distribution, where $A$ is scaled so that the open-loop system is stable, \ie, $K_0 = 0$ is stabilizing, and the pair is controllable. Furthermore, we choose $Q = \Sigma = I_n$ and $R = I_m$ in order to consistently compare the convergence behaviors across different samples. 
For the \ac{slqr} problem, we randomly sample for the sparsity pattern $D$ so that at least half of the entries are zero and all of them have converged from $K_0 = 0$ in less than 30 iterations.
For the \ac{olqr} problem, we also randomly sample the output matrix $C$ with $d =2$, where $98\%$ and $92\%$ of the corresponding iterates have converged from $K_0 = 0$ in less than 50 iterations using $\hess$ and $\euchess$, respectively.

The minimum, maximum and median progress of the three algorithms for both \ac{slqr} and \ac{olqr} problems are illustrated in \Cref{fig:random-slqr} and \Cref{fig:random-olqr}, respectively.
As guaranteed by \Cref{thm:convergence}, the linear-quadratic convergence behavior of \ac{ouralgo} is observed in these problems.
Especially, the quadratic behavior starts as soon as the stability certificate exceeds 1.
In both cases, \ac{ouralgo} with $\hess{h}$ (blue curves) built upon the Riemannian connection has a superior convergence rate compared with the case of using the Euclidean connection (orange curves); this was expected as the Riemannian connection is compatible with the metric induced by the geometry inherent to the cost function itself. This superior performance of \ac{ouralgo} with $\hess{h}$, in the meantime, requires computation of the Christoffel symbols.
\begin{figure}[t]
     \begin{subfigure}[b]{\columnwidth}
        \centering
         \includegraphics[trim=3 22 0 0, clip, width =\textwidth]{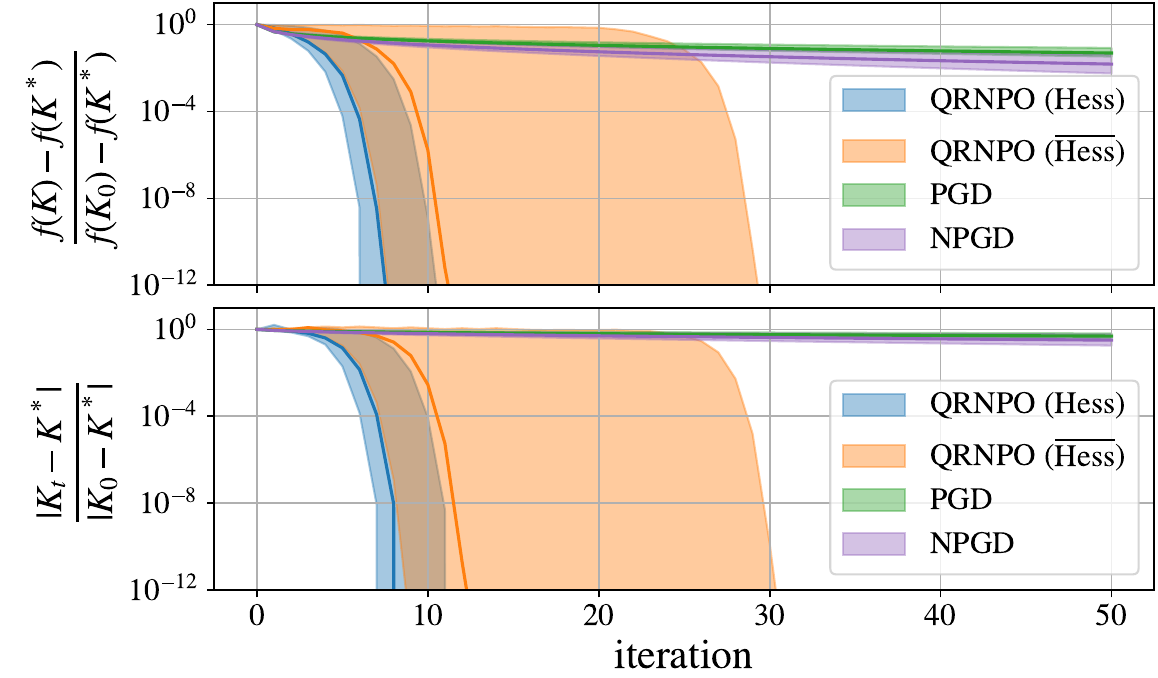}
         \caption{}
         \label{fig:random-slqr}
    \end{subfigure}
    \begin{subfigure}[b]{\columnwidth}
         \centering
         \includegraphics[trim=3 4 0 0, clip, width =\textwidth]{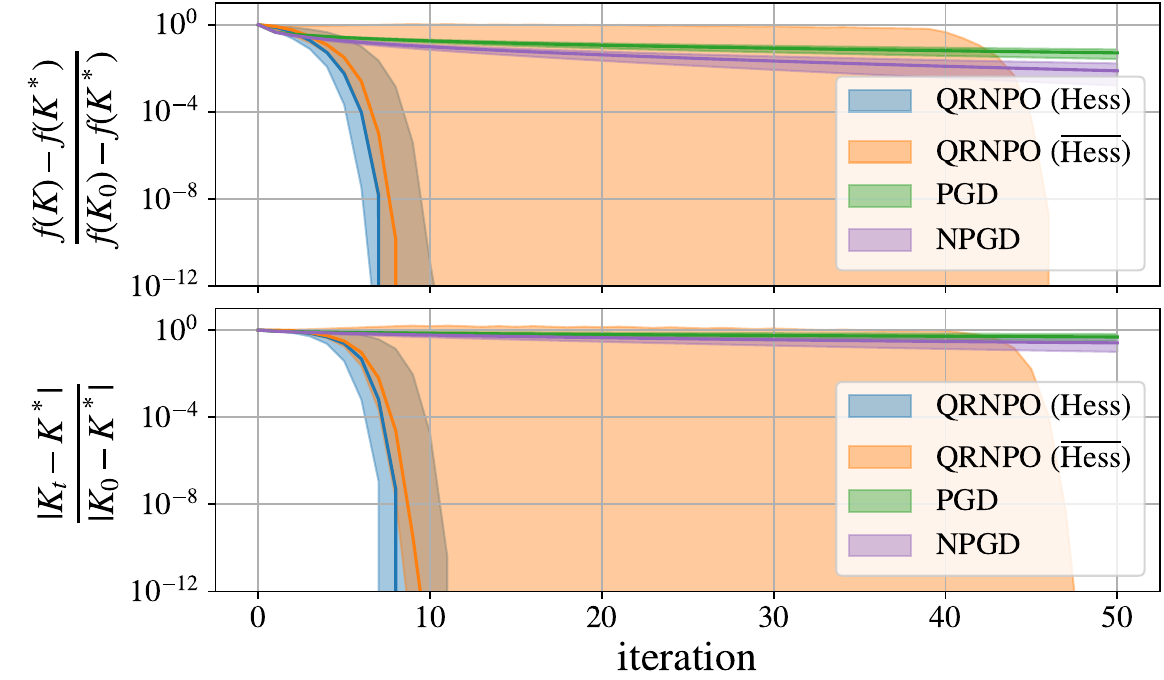}
         \caption{}
         \label{fig:random-olqr}
    \end{subfigure}
         \caption{\small The min, max and median progress of normalized error of iterates and cost values at each iteration of \ac{ouralgo} (with $\hess{h}$ and $\euchess{h}$), \ac{pg} and \ac{npg} for the (a) \ac{slqr} and (b) \ac{olqr} problems with 100 different randomly sampled system parameters, sparsity patterns and output matrices.}
\vspace{-0.3cm} 
\end{figure}
\end{example}

\section{Conclusions and Future Directions}
\label{sec:conclusion}
In this work, we considered the problem of optimizing a smooth function over submanifolds of Schur stabilizing controllers $\stableK$. In order to treat this problem in a more general setting, we studied the first and second order behavior of a smooth function when constrained to an embedded submanifold from an extrinsic point of view.
Subsequently, using the second order information of the restricted function, we developed an algorithm that guarantees convergence to local minima--at least with a linear rate--and eventually with a quadratic rate.
Combining this approach with backtracking line-search techniques or positive definite modifications of the Hessian operator \cite{dennis1996numerical, nocedal2006numerical} can be considered as immediate future directions for a global convergence analysis.

Even though the proposed algorithm depends on the linear structure of $\substableK$, the machinery developed here can be utilized for other
submanifolds, a topic that will be considered in our future works.
For example, in contrast to the \ac{slqr} and \ac{olqr} problems considered, we can explore how a constraint on the \emph{average input energy} translates to a \textit{nonlinear} constraint that pertains to the inherent geometry of the \ac{lqr} problem.
In this direction, we define the \textit{average input energy},
\(\textstyle E_\uu \coloneqq \mathop{\mathbb{E}}_{\x_0 \sim \mathcal{D}} \|\uu\|_{\ell_2}^2,\)
where $\|.\|_{\ell_2}$ refers to $\ell_2$-norm. 
If a static linear policy, \ie, $\uu = K \x$ for $K\in \stableK$, is desired for this problem setup,
then the closed-loop system assumes the form
$\x_k = (A_{\mathrm{cl}})^k \x_0$; one can now
show that under the usual identification of $T_K\stableK$ with $\Kmatrices$, we have $E_\uu(K) = \rnorm{K}{K}^2$. But then, $E_\uu\colon\stableK \mapsto \bR$ is smooth by composition, and as such, every regular level set of $E_\uu$ translates to an upperbound on the average input energy. Regular Level Set Theorem now implies that the average input energy optimal control synthesis can be pursued via an embedded submanifold of $\stableK$ which has a nonlinear but simple structure, whenever considered in the associated Riemannian geometry. Solving this problem still requires an efficient retraction that would substitute the linear updates possible in \ac{slqr} and \ac{olqr} problems. The framework discussed in this work allows the integration of such retractions in the synthesis procedure. As such, the proposed work
opens up a new approach for solving a wide range of constrained  optimization problems over the manifold of Schur stabilizing controllers.

\section*{Acknowledgments}
The first author thank Professor John M. Lee for his inspiring lectures on differential geometry and insightful comments on this manuscript. The authors also thank Jingjing Bu for helpful discussions on first order methods for control, as well as the Associate Editor and anonymous reviewers for constructive feedback and suggestions that have been reflected in this manuscript. 

\appendix
\label{appendix}
\begin{proof}[On the Hessian Operator]\label{app:diffgeom} The Hessian operator (denoted by $\hess{f}$) as introduced in \S\ref{sec:extrinsic} is well-defined and the value of $\hess{f}[U]$ at any $K \in\stableK$ depends only on $U_K$; this is due to the property for the connection.
Note that
\begin{multline}
    \tensor{\hess{f}[U]}{W}{Y} 
    = U\tensor{\grad{f}}{W}{Y} - \tensor{\grad{f}}{\nabla_U W}{Y} \\
    = U(W f) - (\nabla_U W)f = W(U f) - (\nabla_W U)f \label{eq:hess-compute},
\end{multline}
$\forall U, W \in \fX(\stableK)$, where the first equality is the consequence
of having the Riemannian connection compatible with the metric, the second one is by 
the definition of $\grad{f}$, and the last one is due to symmetry of the Riemannian connection. Thus, by \cref{eq:hess-compute}, the Hessian operator is self-adjoint, \ie,
\[\tensor{\hess{f}[U]}{W}{Y} = \tensor{U}{\hess{f}[W]}{Y}.\]
Similarly, we can consider $\hess{h}$ for any smooth function $h \in C^{\infty}(\substableK)$, where we consider the submanifold $\substableK \subset \stableK$ with the induced Riemannian metric and the associated Riemannian connection of $\stableK$.

Next, for the computational purposes, we would like to introduce the \emph{covariant Hessian} of $f$ with respect to $\metric$, denoted by $\nabla^2 f$ \cite[Proposition 4.17]{lee2018introduction}. It is a 2-tensor field obtained by taking total covariant derivative of $f$ twice. The Riemannian connection is symmetric, and so is the covariant Hessian. Furthermore, the covariant Hessian and Hessian operator are related as,
\begin{gather}\label{eq:covariantHessian}
\begin{small}
\begin{aligned}
\nabla^2 f [W,U] = \nabla_U \nabla_W f - (\nabla_U W) f =
\tensor{\hess{f}[U]}{W}{Y},
\end{aligned}
\end{small}
\end{gather}
where the last equality follows by \cref{eq:hess-compute}.

Recall now that $\grad{f} = (\diff f)^\sharp$, where $\sharp$ denotes the index raising operator, referred to as the \emph{sharp} operator \cite[Chapter 2]{lee2018introduction}. Also, $\hess{f}\colon\mathfrak{X}(\stableK) \mapsto \mathfrak{X}(\stableK)$ can be viewed as the total covariant derivative of $\grad{f}$, \ie, $\hess{f} = \nabla \grad{f}$. Then
\begin{equation}\label{eq:hess-vs-covariant-Hessian}
    \hess{f} = \nabla (\diff f)^\sharp = \left(\nabla (\diff f) \right)^\sharp = \left(\nabla^2 f \right)^\sharp,
\end{equation}
where the equality in the middle follows by the fact the index raising operator commute with the covariant derivative operator, and the last equality is due to the definition of connection for a smooth function $f \in C^\infty(\stableK)$. Note that in \cref{eq:hess-vs-covariant-Hessian}, the index raising refer to the second argument of $\nabla^2 f$. However, as the covariant Hessian of any smooth function is a symmetric 2-tensor field, the index raising could be with respect to any of the entries. 
Finally, similar definitions and relations as discussed above are available for $h$ as a smooth function on the embedded Riemannian submanifold $\substableK$ with the induced metric and corresponding connection; these are omitted for brevity. 
\end{proof}

\begin{proof}[Proof of \Cref{lem:dlyap}]
Since $\rho\colon\Amatrices \mapsto \bR$ is a continuous map, $\Astable$ is an open subset of $\Amatrices$ and thus an open submanifold. For each $A \in \Astable$, by Lyapunov Stability Criterion, there exists a unique solution $X$ to \cref{eq:lyap-gen} which has the infinite-sum representation.
But, as for each $A \in \Astable$, the series converges, each matrix entry of $X$ can be written as a convergent power series of elements of $A$ and $Z$. Therefore, each matrix entry of $X$ is a real analytic function of several variables (as defined in \cite{krantz2002primer}) on the open subset $\Astable \times \Amatrices \subset \Amatrices \times \Amatrices$. Hence, we conclude that $\lyap$ is a well-defined smooth map.\footnote{An alternative argument can be provided by the closed form solution of \cref{eq:lyap-gen} and its \emph{vectorization} involving rational functions of several variables with non-vanishing denominators--cf. Lemma 3.6 in \cite{bu2019lqr}.}
Next, under the identification in the premise, 
it follows that,
\begin{gather*}
    \diff \lyap_{(A,Q)}[E,F] = \diff \lyap_{(A,Q)}[E,0] +  \diff \lyap_{(A,Q)}[0,F].
\end{gather*}
However, $\lyap$ is linear in the second entry, so $\diff \lyap_{(A,Q)}[0,F] = \lyap(A,F)$. Also since $\Astable$ is open, for small enough $\varepsilon$, $\gamma\colon [0,\varepsilon] \mapsto \Astable \times \Amatrices$ with $\gamma(t) = (A + t E, Q)$ is a well-defined smooth curve starting at $(A,Q)$ whose initial velocity is $(E,0)$. Then,
\[\diff \lyap_{(A,Q)}[E,0] = {d}/{dt}\big|_{t=0} \lyap \circ \gamma(t).\]
Let $X_t \coloneqq \lyap \circ \gamma(t)$ and $X \coloneqq \lyap \circ \gamma(0)$; then we obtain,
\begin{align*}
    X_t - X 
    &= \lyap \left(A, t(E X A^\intercal + A X E^\intercal) + \mathcal{O}(t^2) \right)\\
    &= t \lyap(A, E X A^\intercal + A X E^\intercal) + \mathcal{O}(t^2),
\end{align*}
where the first equality is by direct algebraic manipulation and the second one follows by linearity of $\lyap$ in the second entry.
Therefore, $\diff \lyap_{(A,Q)}[E,0] = \lyap(A, E X A^\intercal + A X E^\intercal)$, and the first claim follows by adding the two computed differentials and using linearity of $\lyap$ in the second entry again. Finally, note that any square matrix has a spectrum identical to its transpose; therefore if $A \in \Astable$ then $A^\intercal \in \Astable$, and thus the last property follows by the convergent series representations of $\lyap(A^\intercal, Q)$ and $\lyap(A, \Sigma)$, as well as the cyclic permutation property of trace.
\end{proof}

\begin{proof}[Proof of \Cref{lem:tensor}]
By \Cref{lem:dlyap} for each $K\in \stableK$, $\lyap(A_{\mathrm{cl}},\Sigma_K)$ is uniquely determined, symmetric and smooth in $K$, since $A_{\mathrm{cl}}$ is stabilizing. Also, $\lyap(A_{\mathrm{cl}},\Sigma_K) \in \Amatrices$ is positive semidefinite by observing the infinite-sum representation of solution to Lyapunov equation and the fact that $\Sigma_K \succeq 0$.
Next, $\tr{(V_K)^\intercal W_K \lyap(A_{\mathrm{cl}},\Sigma_K)}$ is a smooth function of elements of $V_K, W_K$ and $\lyap(A_{\mathrm{cl}},\Sigma_K)$.
Therefore, for any $V,W \in \fX(\stableK)$, the function $\tensor{V}{W}{Y}$, as defined in the premise, is well-defined and smooth on $\stableK$.
Additionally, by linearity of trace, we observe that $\tensor{\cdot}{\cdot}{Y}$ is multilinear over $C^\infty(\stableK)$, \ie,
\[\tensor{f U + h V}{W}{Y} = f\tensor{U}{W}{Y} + h\tensor{V}{W}{Y},\]
for any $f,h \in C^\infty(\stableK)$ and $U,V \in \fX(\stableK)$, and similarly for the second entry. Therefore, by Tensor Characterization Lemma \cite[Lemma 12.24]{lee2013smooth}, it is induced by a smooth covariant 2-tensor field. Finally, symmetry follows from the
fact that for any $K \in \stableK$, the cyclic property of trace implies,
\begin{multline*}
    \tensor{V}{W}{Y}\big|_K = \tr{(\lyap(A_{\mathrm{cl}},\Sigma_K))^\intercal \; (W_K)^\intercal \; V_K}\\
    =\tr{(W_K)^\intercal \; V_K \; (\lyap(A_{\mathrm{cl}},\Sigma_K))^\intercal} = \tensor{W}{V}{Y}\big|_K,
\end{multline*}
as $\lyap(A_{\mathrm{cl}},\Sigma_K)$ is symmetric.
\end{proof}

\begin{proof}[Proof of \Cref{prop:Riemannian-metric}]
We know that $\stableK$ is a smooth manifold, and by \Cref{lem:tensor} and Smoothness Criteria for Tensor Fields \cite[Proposition 12.19]{lee2013smooth}, $\metric$ is a smooth symmetric covariant 2-tensor field. Hence, it suffices
to show that it is positive definite at each point $K \in \stableK$. But $\Sigma_K \succeq 0$ and $(A_{\mathrm{cl}},\Sigma_K)$ is controllable; therefore $Y_K = \lyap(A_{\mathrm{cl}},\Sigma_K)$ is a positive definite matrix, implying that $\metric_K(E,E) = \tr{(E Y_K^{\sfrac{1}{2}})^\intercal E Y_K^{\sfrac{1}{2}}} \geq 0$, for any $E \in T_K \stableK$ with equality if and only if $E$ is the zero element. Next, to compute the coordinate representation of $\metric$, for each coordinate pairs $(i,j)$ and $(k,\ell)$ we have,
\begin{gather*}
    \metric_{(i,j) (k,\ell)}(K) = \metric_K(\partial_{i,j}|_K,\partial_{k,\ell}|_K)
    = \tr{(\partial_{i,j}|_K)^\intercal \partial_{k,\ell}|_K Y_K},
\end{gather*}
under the usual identification of $T_K \stableK$. Since under this identification $\partial_{i,j}$ corresponds to the element of $\Kmatrices$ with entry $1$ in $(i,j)$-th coordinate and zero elsewhere, the expression for $\metric_{(i,j) (k,\ell)}$ follows by direct computation of the last equality.
Finally, by definition of \emph{inverse matrix}, for each $(i,j)$ and $(k,\ell)$, we must have $\sum_{r,s} \metric^{(i,j)(r,s)} \metric_{(r,s)(k,\ell)} = 1$ if $(i,j)=(k, \ell)$ and $0$ otherwise.
Next, for each $i = k$, let $\left[\metric^{(k,.)(k,.)}\right]$ denote the matrix with $\metric^{(k,j)(k,s)}$ as its $(j,s)$th entry. Then, by the expression for $\metric_{(i,j) (k,\ell)}$, it must satisfy
\(\left[\metric^{(k,.)(k,.)}\right] Y_K = I_n,\)
and therefore $[\metric^{(k,.)(k,.)}] = Y_K^{-1}$ as $Y_K \succ 0$. The rest of the reasoning follows by performing a similar computation for each $i\neq k$ and noting the zero pattern in the expression for $\metric_{(i,j) (k,\ell)}$.
\end{proof}

\begin{proof}[Proof of \Cref{lem:christoffel}]
We know that
\begin{multline}\label{eq:christoffel-gen}
    \textstyle\Gamma^{(i,j)}_{(k,\ell)(p,q)} = \sum_{r,s} ({{\metric^{(i,j)(r,s)}}/{2}})  \big( \partial_{(k,\ell)} \metric_{(r,s)(p,q)} \\
    \textstyle + \partial_{(p,q)} \metric_{(r,s)(k,\ell)} - \partial_{(r,s)} \metric_{(k,\ell)(p,q)} \big),
\end{multline}
for any $(i,j),(k,\ell),(p,q) \in [m]\times [n]$,
where $\metric^{(i,j)(r,s)}$ denotes the inverse matrix of $\metric_{(i,j)(r,s)}$.
If $k \neq i \neq p \neq k$, then by sparsity pattern in the expression for $\metric_{(i,j) (k,\ell)}$ in \Cref{prop:Riemannian-metric}, we obtain $\Gamma^{(i,j)}_{(k,\ell)(p,q)} = 0$.
 Next, if $k = i \neq p$, then \cref{eq:christoffel-gen} simplifies to
\begin{gather*}
    \sum_{s} \frac{\metric^{(i,j)(i,s)}}{2}  \Big( \partial_{(p,q)} \metric_{(i,s)(i,\ell)} \Big) = \frac{1}{2}\left( \partial_{(p,q)} [Y_K]_{(\ell,.)} \right)[Y_K^{-1}]_{(.,j)}.
\end{gather*}
Next, for any fix $i,p$ and $q$, let $\Gamma^{(i,.)}_{(i,.)(p,q)}$ denote the $n\times n$ matrix with $\Gamma^{(i,j)}_{(i,\ell)(p,q)}$ as its $(\ell,j)$ entry. Then it must satisfy
\(\Gamma^{(i,.)}_{(i,.)(p,q)} = \sfrac{1}{2} \left( \partial_{(p,q)} Y_K \right) Y_K^{-1},\)
where $\partial_{(p,q)} Y_K$ indicates the action of tangent vector $\partial_{(p,q)}$ on the composite map $K \mapsto (A_{\mathrm{cl}},\Sigma_K) \xrightarrow{\lyap} Y_K$.
By \Cref{lem:dlyap}, we can compute,
\begin{gather*}
\begin{aligned}
    \partial_{(p,q)} Y_K = \diff \lyap_{(A_{\mathrm{cl}},\Sigma_K)} \left[B \partial_{(p,q)}, \; \partial_{(p,q)}^\intercal \Sigma_2 K + K^\intercal \Sigma_2 \partial_{(p,q)}\right]
\end{aligned}
\end{gather*}
under the usual identification of $T_K \stableK$. Thus, $\partial_{(p,q)} Y_K = \lyapij_K{(p,q)}$ which proves the second case. The third case follows by the symmetry of the Riemannian connection, \ie, $\Gamma^{(i,j)}_{(k,\ell)(p,q)} = \Gamma^{(i,j)}_{(p,q)(k,\ell)}$.
Next, if $p = k \neq i$, then by \Cref{prop:Riemannian-metric}, \cref{eq:christoffel-gen} simplifies to
\begin{gather*}
    \sum_{s} \frac{\metric^{(i,j)(i,s)}}{2}  \big( \partial_{(i,s)} \metric_{(k,\ell)(k,q)} \big) = \frac{1}{2} \sum_{s} \left[ \partial_{(i,s)} Y_K \right]_{(q,\ell)} [Y_K^{-1}]_{(s,j)} ,
\end{gather*}
with $\partial_{(i,s)} Y_K = \lyapij_K{(i,s)}$ computed similarly.
Finally, if $k = i = p$ then similarly \cref{eq:christoffel-gen} simplifies to
\begin{gather*}
    \sum_{s} \frac{\metric^{(i,j)(i,s)}}{2}  \Big( \partial_{(i,\ell)} \metric_{(i,s)(i,q)} + \partial_{(i,q)} \metric_{(i,s)(i,\ell)} - \partial_{(i,s)} \metric_{(i,\ell)(i,q)} \Big),
\end{gather*}
and substituting each term similarly completes the proof.
\end{proof}

\begin{proof}[Proof of \Cref{prop:extrinsic}]
Note that $f$ is smooth and $\substableK$ is an embedded submanifold of $\stableK$. Therefore, $h\colon\substableK \mapsto \bR$ is smooth by restriction and we can define $\grad{h}$ and $\hess{h}$ on $\substableK$.
But, $\grad{h} \in \fX(\substableK)$ is the unique vector field on $\substableK$ such that $\submetric(W,\grad{h}) = W h$ for any $W \in \fX(\substableK)$. 
Unraveling the definition implies that for any $K \in \substableK\subset \stableK$,
\begin{align*}
    \diff h_K(W_K) &= \diff f_K ( \diff \iota_{\footnotesize\substableK} (W_K)) =
    \metric_K\left(  \diff \iota_{\footnotesize\substableK}(W_K), \grad{f}_K\right) \\
    &= \metric_K\left(  \diff \iota_{\footnotesize\substableK}(W_K), \diff \iota_{\footnotesize\substableK} (\tproj \grad{f}_K)\right)
    ,
\end{align*}
as $h = f \circ \iota_{\footnotesize\substableK}$ and thus $\diff h = \diff f \circ \diff \iota_{\footnotesize\substableK}$, where the last equality follows by the fact that $\iota_{\footnotesize\substableK}(W_K) \in T_K\stableK$ is tangent to $\substableK$. By definition of tangential projection, $\tproj (\grad{f}|_{\footnotesize\substableK})$ is then a vector field on $\substableK$ that satisfies
\[W h = \submetric\left(W, \tproj \grad{f} |_{\footnotesize\substableK}\right),\]
for any $W \in \fX(\substableK)$.
Therefore, the first claim follows by uniqueness of the gradient.
Next, note that the Hessian operator of $h \in C^\infty(\substableK)$ is defined as
\(\hess{h}[V] \coloneqq \widetilde{\nabla}_V \grad{h},\)
for any $V \in \fX(\substableK)$. But then, the first claim together with  \cref{eq:subconnection} and the linearity of connection imply that
\begin{align*}
    \hess{h}[V] 
    =& \tproj \nabla_V (\tproj (\grad{f}|_{\footnotesize\substableK})) \\
    =&\tproj (\hess{f}[V]\big|_{\footnotesize\substableK}) - \tproj \nabla_V (\nproj (\grad{f}|_{\footnotesize\substableK})),
\end{align*}
where all $V$, $\tproj (\grad{f}|_{\footnotesize\substableK})$ and $\nproj (\grad{f}|_{\footnotesize\substableK})$ are extended arbitrarily to vector fields on a neighborhood of $\substableK$ in $\stableK$. Finally, the extrinsic expression of $\hess{h}$ follows by The Weingarten Equation \cite[Proposition 8.4]{lee2018introduction}, indicating that $\tproj \nabla_V (\nproj (\grad{f}|_{\footnotesize\substableK})) = - \wein_{\nproj (\grad{f}|_{\footnotesize\substableK})}[V]$.
\end{proof}

\begin{proof}[Proof of \Cref{lem:nondegenerate}]
Let $\widetilde{\gamma}\colon (-\varepsilon, \varepsilon) \mapsto \substableK$ denote the smooth geodesic curve on the submanifold $\substableK$ with $\widetilde{\gamma}(0) =K^*$ and $\widetilde{\gamma}'(0) =F$ for an arbitrary $F \in T_{K^*}\substableK$. Define $\ell(t) \coloneqq h \circ \widetilde{\gamma}(t)\colon (-\varepsilon, \varepsilon) \mapsto \bR$ which is smooth by composition. Then, $K^*$ is a local minimum for $h$, so is $t=0$ for $\ell(t)$ following by smoothness of $\widetilde{\gamma}$. Therefore,
\begin{gather*}
    0 = \ell'(0) = \diff h_{\widetilde{\gamma}(0)} \circ \widetilde{\gamma}'(0) = \tensor{\grad{h}_{K^*}}{F}{Y_{K^*}},
\end{gather*}
and as $F$ was an arbitrary tangent vector, we conclude that $\grad{h}_{K^*} = 0$. Now, recall that non-degenerate critical points are isolated \cite[Corollary 2.3]{milnor2016morse}.
Next, Taylor's formula for $\ell$ at $t= 0$ yields
\[\ell(t) = \ell(0) + t\tensor{\grad{h}_{K^*}}{F}{Y_{K^*}} + \sfrac{1}{2} \ell''(s) t^2,\]
for some $s \in(0, t)$. 
As $\grad{h}_{K^*} = 0$ and $t=0$ is a local minimum of $\ell(t)$, we must have $\ell''(s) \geq 0$; by tending $t \mapsto 0$, smoothness of $\ell$ implies that $\ell''(0) \geq 0$. 
But,
\begin{gather*}
    \ell''(t) = \widetilde{D}_t \tensor{\grad{h}_{\widetilde{\gamma}(t)}}{\widetilde{\gamma}'(t)}{Y_{\widetilde{\gamma}(t)}} = \tensor{\widetilde{D}_t\grad{h}_{\widetilde{\gamma}(t)}}{\widetilde{\gamma}'(t)}{Y_{\widetilde{\gamma}(t)}}
\end{gather*}
where $\widetilde{D}_t$ denotes the covariant derivative along $\widetilde{\gamma}$ on $\substableK$, and the last equality follows by its compatibility with the metric and the fact that $\widetilde{\gamma}$ is a geodesic (so that $\widetilde{D}_t \widetilde{\gamma}'(t) \equiv 0$). As $\grad{h}|_{\widetilde{\gamma}(t)} \in \fX(\gamma)$ is clearly extendable, we conclude that
\begin{gather*}
    \ell''(t) = \tensor{\nabla_{\widetilde{\gamma}'(t)}\grad{h}\big|_{\widetilde{\gamma}(t)}}{\widetilde{\gamma}'(t)}{Y_{\widetilde{\gamma}(t)}} = \tensor{\hess{h}_{\widetilde{\gamma}(t)}[\widetilde{\gamma}'(t)]}{\widetilde{\gamma}'(t)}{Y_{\widetilde{\gamma}(t)}}
\end{gather*}
and thus particularly $\ell''(0) = \tensor{\hess{h}_{K^*}[F]}{F}{Y_{K^*}}$.
Since $F$ was arbitrary and $K^*$ is nondegenerate, $\ell''(0) \geq 0$ implies that $\hess{h}_{K^*}$ is positive definite. Next, existence of a neighborhood at $K^*$ on which $\hess{h}$ is positive definite follows by smoothness--in particular continuity--of the operator $\hess{h}_K$ in $K$.
Finally, let $\widetilde{\nabla}$ and $\widetilde{\Enabla}$ denote the connections on $T\substableK$ induced, respectively, by the connections $\nabla$ and $\Enabla$ on $T\stableK$. Then by The Difference Tensor Lemma, the difference tensor between $\widetilde{\nabla}$ and $\widetilde{\Enabla}$--defined as $D(U,V)\coloneqq \widetilde{\nabla}_U V - \widetilde{\Enabla}_U V$ for any $U, V \in \fX(\substableK)$--is indeed a (1,2)-tensor field. That means, as $\grad{h}_{K^*} = 0$,
\begin{gather*}
\hess{h}_{K^*}[U_{K^*}] - \euchess{h}_{K^*}[U_{K^*}]
= D(U, \grad{h})|_{K^*}
= 0.
\end{gather*}%
The last claim then follows as $U \in \fX(\substableK)$ was arbitrary. 
\end{proof}

\begin{proof}[Proof of \Cref{thm:convergence}]
By \Cref{lem:nondegenerate}, $\grad{h}_{K^*} = 0$ and there exists a neighborhood $\nghbd$ of $K^*$ on which $\hess{h}_{K}$ is positive. Furthermore, by continuity of $\hess{h}$ (and, if necessary, shrinking $\nghbd$) we can obtain constant positive scalars $m$ and $M$ such that for all $K \in \nghbd$ and $G \in T_K\substableK$, 
\begin{equation}\label{eq:mMbounds}
    m \rnorm{G}{K}^2 \leq \tensor{\hess{h}_K[G]}{G}{Y_K} \leq M \rnorm{G}{K}^2,
\end{equation}
where $\rnorm{\cdot}{K}$ denotes the norm induced by $\metric$ at $K$.
In particular, if $G_t \in T_{K_t}\substableK$ is the Newton direction at some point $K_t \in \nghbd$, then (by Cauchy-Schwartz inequality at $K_t$)
\begin{equation}\label{eq:bound-nd}
    \rnorm{G_t}{K_t} \leq ({\large\sfrac{1}{m}})
    \rnorm{\grad{h}_{K_t}}{K_t}.
\end{equation}
Next, define the curve $\gamma\colon [0,\stabcer_{K_t}] \mapsto \substableK$ with $\gamma(\eta) = K_t + \eta G_t$, and consider a smooth parallel vector field (with respect to the Riemannian connection) $E(\eta)$ along $\gamma$--refer to \cite{lee2018introduction} for \emph{parallel vector fields along curves} and \emph{parallel transport}. Also, define $\phi\colon [0,\stabcer_{K_t}] \mapsto \bR$ with 
\(\phi(\eta) \coloneqq \tensor{\grad{h}_{\gamma(\eta)}}{E(\eta)}{Y_{\gamma(\eta)}}.\)
Notice that $\grad{h}$ is smooth, so is $\phi$ and by compatibility with the metric and that $\grad{h}_{\gamma(\eta)}$ is clearly extendable, we have
\begin{gather*}
    \phi'(\eta) = \tensor{D_{\eta}\grad{h}_{\gamma(\eta)}}{E(\eta)}{Y_{\gamma(\eta)}} = \tensor{\hess{h}_{\gamma(\eta)}[G_t]}{E(\eta))}{Y_{\gamma(\eta)}},
\end{gather*}
where $D_{\eta}$ is the covariant derivative along $\gamma$ and $G_t$ is extended to the vector field along $\gamma$ with constant coordinates in the global coordinate frame. Thus, as
\[\textstyle \phi(\eta) = \phi(0) + \eta \phi'(0) + \int_0^\eta [\phi'(\tau)-\phi'(0)] d\tau,\]
by direct substitution and the fact that $G_t$ is the Newton direction at iteration $t$, we obtain that
\begin{multline*}
    \phi(\eta) = (\eta -1)\tensor{\hess{h}_{K_t}[G_t]}{E(0)}{Y_{K_t}}\\
    +\textstyle \int_0^{\eta} \tensor{[\hess{h}_{\gamma(\tau)} - \ptrans^\gamma_{0,\tau} \hess{h}_{\gamma(0)}]G_t}{E(\tau))}{Y_{\gamma(\tau)}} d\tau,
\end{multline*}
where $\ptrans^\gamma_{0,\tau}$ denotes the parallel transport from $0$ to $\tau$ along $\gamma$.
Again, as every parallel transport map along $\gamma$ is a linear isometry we claim that
\begin{gather*}
    \tensor{\grad{h}_{K_{t+1}}}{E(\eta_t)}{Y_{K_{t+1}}} = (\eta_t -1)\tensor{\ptrans^\gamma_{0,\eta_t}\hess{h}_{K_t}[G_t]}{E(\eta_t)}{Y_{K_{t+1}}}\\
    \textstyle \quad + \int_0^{\eta_t} \tensor{\ptrans^\gamma_{\tau,\eta_t}[\hess{h}_{\gamma(\tau)} - \ptrans^\gamma_{0,\tau} \hess{h}_{\gamma(0)}]G_t}{E(\eta_t))}{Y_{K_{t+1}}} d\tau.
\end{gather*}
Note that, for each $\tau \in [0,\stabcer_{K_t}]$, $\hess{h}_{\gamma(\tau)}$ is a self-adjoint operator that is smooth in $\tau$ as $\gamma$ is. So, by \cref{eq:mMbounds}, we obtain
\[ \rnorm{\ptrans^\gamma_{0,\eta_t}\hess{h}_{K_t}[G_t]}{K_{t+1}}
\leq M \rnorm{G_t}{K_{t}}\]
and by smoothness there exist a constant $L>0$ such that
\begin{equation*}
    \rnorm{\ptrans^\gamma_{\tau,\eta_t}[\hess{h}_{\gamma(\tau)} - \ptrans^\gamma_{0,\tau} \hess{h}_{\gamma(0)}]G_t}{K_{t+1}}
    \leq \tau L \rnorm{G_t}{K_{t}}^2 
\end{equation*}
where we used the isometry of parallel transport again in obtaining the bounds. Therefore, by choosing the parallel vector field $E(\eta)$ along $\gamma$ such that $E(\eta_t) = \grad{h}_{K_{t+1}}$ we obtain that
\begin{multline}\label{eq:grad-iter-bound}
    \rnorm{\grad{h}_{K_{t+1}}}{K_{t+1}}\hspace{-0.2cm} \leq M|1-\eta_t| \rnorm{G_t}{K_{t}} + ({\large\sfrac{\eta_t^2 L}{2}}) \rnorm{G_t}{K_{t}}^2\\
    \leq \frac{M|1-\eta_t|}{m} \rnorm{\grad{h}_{K_t}}{K_t} \hspace{-0.2cm} 
    + \frac{L\eta_t}{2m^2} \rnorm{\grad{h}_{K_t}}{K_t}^2
\end{multline}
where the last inequality follows by \cref{eq:bound-nd} and since $\eta_t \leq 1$.
Next, let $F_{t+1} \in T_{K_{t+1}}\substableK$ be tangent vector that $\xi(\eta) = \widetilde{\Exp}_{K_{t+1}}[\eta F_{t+1}]$ is the minimum-length geodesic in $\substableK$ joining $\xi(0) = K_{t+1}$ to $\xi(1) = K^*$, where $\widetilde{\Exp}$ denotes the exponential map on $\substableK$.
This is certainly possible (by shrinking $\nghbd$ if necessary) because geodesics are locally-minimizing \cite{lee2018introduction}.
Similar to the function $\phi$, define $\psi\colon [0,1] \mapsto \bR$ with 
\[\psi(\eta) \coloneqq \tensor{\grad{h}_{\xi(\eta)}}{E(\eta)}{Y_{\xi(\eta)}},\]
for some parallel vector $E(\eta)$ along $\xi$.
Then, similarly
\begin{gather*}
    \psi'(\eta) = \tensor{D_{\eta}\grad{h}_{\xi(\eta)}}{E(\eta)}{Y_{\xi(\eta)}}
    = \tensor{\hess{h}_{\xi(\eta)}[\xi'(\eta)]}{E(\eta))}{Y_{\xi(\eta)}}.
\end{gather*}
The velocity of any geodesic is a parallel vector field along itself, so by choosing $E(\eta) = \xi'(\eta)$ and using the fundamental lemma of calculus for $\psi$ we obtain that
\begin{gather*}
    \psi(1) = \tensor{\grad{h}_{K_{t+1}}}{F_{t+1}}{Y_{K_{t+1}}} \hspace{-0.0cm}+
     \textstyle \int_0^1 \tensor{\hess{h}_{\xi(\tau)}[\xi'(\tau)]}{\xi'(\tau))}{Y_{\xi(\tau)}} d\tau
\end{gather*}
Note that $\psi(1) =0$ and $\rnorm{\xi'(\tau)}{\xi(\tau)} = \rnorm{F_{t+1}}{K_{t+1}}$ for all $\tau$ as $\xi$ is a geodesic. Thus, by using \cref{eq:mMbounds}, we conclude that
\begin{gather}\label{eq:grad-F-bound}
\small
\begin{aligned}
    m \rnorm{F_{t+1}}{K_{t+1}} \leq \rnorm{\grad{h}_{K_{t+1}}}{K_{t+1}} \leq M \rnorm{F_{t+1}}{K_{t+1}}
\end{aligned}
\end{gather}%
where the last inequality follows by enlarging $M$ when
necessary.
Finally, combining \cref{eq:grad-iter-bound} and \cref{eq:grad-F-bound} at two iterations $t+1$ and $t$, and noticing $\dist(K_{t+1},K^*) = \rnorm{F_{t+1}}{K_{t+1}}$ imply that
\begin{multline}\label{eq:distance-bound}
    \dist(K_{t+1},K^*) \leq {\Large (\sfrac{|1-\eta_t| M^2}{m^2})}~\dist(K_t,K^*) \\
    +  {\Large(\sfrac{\eta_t LM^2}{2m^3})}~\dist(K_t,K^*)^2,
\end{multline}
where $\dist(\cdot,\cdot)$ denotes the Riemannian distance function between two points.
Next, note that the mapping $K \mapsto \mathcal{Q}_K$ is chosen to be smooth such that $\mathcal{Q}_K \succ 0$, therefore as a result of \Cref{lem:dlyap}, the mapping $K \mapsto \lyap(A_{\mathrm{cl}}^\intercal, \mathcal{Q}_K)$ is smooth by composition. By smoothness (in particular continuity) of this mapping and the continuity of the maximum eigenvalue (utilized in the definition of stability certificate $\stabcer_K$ in \Cref{lem:stability-cert}), we can shrink $\nghbd$--if necessary--to obtain a positive constant $c>0$ such that
\begin{equation}\label{eq:stab-cer-lowerbound}
    \stabcer_{K_t} \geq \Large\sfrac{c}{\rnorm{G_t}{K_t}}\geq \sfrac{c\, m}{\left(M \dist(K_t,K^*) \right)},
\end{equation}
where the last inequality follows by combining \cref{eq:bound-nd}, \cref{eq:grad-F-bound} and the fact that $\dist(K_t,K^*) = \rnorm{F_{t}}{K_t}$.
Now, pick $r \in (0,1)$; if we set $\nghbd^* \subset \nghbd \subset \substableK$ such that for any $K_0 \in \nghbd^*$ we have
\[\dist(K_0,K^*) < \min\{\frac{c (\sfrac{M}{m})}{(\sfrac{M}{m})^2 - r/2}, \frac{ r}{(\sfrac{L}{m}) (\sfrac{M}{m})^2}\},\]
then by the choice of stepsize $\eta_t = \min\{\stabcer_{K_t}, 1\}$ and the lower-bound in \cref{eq:stab-cer-lowerbound}, we can claim that
\[{\large\sfrac{|1-\eta_0|M^2}{m^2}}
    +  ({\large\sfrac{\eta_0 LM^2}{2m^3}}) ~ \dist(K_0,K^*) < r.\]
But then, \cref{eq:distance-bound} implies that
\(\dist(K_1,K^*) \leq r \; \dist(K_0,K^*).\)
Therefore, $K_1 \in \nghbd^*$ as $r<1$, and thus by induction we conclude a linear convergence rate to $K^*$. Consequently, \cref{eq:stab-cer-lowerbound} implies that $\stabcer_{K_t} \geq 1$ for large enough $t$, and thus by the choice of step-size, \cref{eq:distance-bound} simplifies to
\begin{equation*}
    \dist(K_{t+1},K^*) \leq  ({\large\sfrac{LM^2}{2m^3}})~ \dist(K_t,K^*)^2,
\end{equation*}
guaranteeing a quadratic convergence rate.
Finally, \Cref{lem:nondegenerate} implies that a critical point is nondegenerate with respect to the induced Riemannian connection on $T\substableK$ if and only if it is so with respect to the Euclidean one. The proof for \ac{ouralgo} with $\euchess$ then follows similarly by redefining $\phi$ and $\psi$ using the Euclidean metric under the usual identification of the tangent bundle.
\end{proof}

\iftoggle{arXiv-version}{
\begin{proof}[Proof of \Cref{prop:lqrcost}]
By definition, $f\colon \stableK \mapsto \bR$ can be viewed as 
the composition:
\begin{equation}\label{eq:fcompos}
\begin{small}
    \begin{aligned}
        f: K \xrightarrow{\Phi} (A_{\mathrm{cl}}^\intercal, K^\intercal R K +Q) \xrightarrow{\lyap} P_K \xrightarrow{\Psi} \frac{1}{2} \tr{P_K \Sigma_1}.
    \end{aligned}
\end{small}
 \end{equation}
Since the first and last maps are smooth (\ie, linear or quadratic in $K$), we conclude that $f \in C^\infty(\stableK)$ by composition and \Cref{lem:dlyap}.
For any $K \in \stableK$, we can compute its differential at $K$, denoted by $\diff f_K$, using the chain rule:
\[\diff f_K(E) =\diff \Psi_{P_K} \circ \diff \lyap_{(A_{\mathrm{cl}}^\intercal, K^\intercal R K +Q)} \circ \diff \Phi_K (E),\]
for any $E \in T_K\stableK$.  But $\Psi$ is a linear map, and under the usual identification of the tangent bundle we obtain
\[\diff \Phi_K(E) = (E^\intercal B^\intercal, E^\intercal R K + K^\intercal R E).\]
Therefore, by \Cref{lem:dlyap} we claim the followings
\begin{align}\label{eq:dP_K}
    \diff(\lyap \circ \Phi)_K(E) =&
    \lyap\big(A_{\mathrm{cl}}^\intercal, \; E^\intercal (B^\intercal P_K A_{\mathrm{cl}} + R K) \nonumber\\
    &+ (K^\intercal R + A_{\mathrm{cl}}^\intercal P_K B) E\big),\\
    \implies \diff f_K(E) =& \Psi \circ \lyap\big(A_{\mathrm{cl}}^\intercal, E^\intercal (B^\intercal P_K A_{\mathrm{cl}} + R K) \nonumber\\
    &+ (K^\intercal R + A_{\mathrm{cl}}^\intercal P_K B) E\big)\nonumber.
\end{align}
Thus, $\diff f_K(E) = \tensor{E}{R K + B^\intercal P_K A_{\mathrm{cl}}}{Y_K}$ with $Y_K = \lyap(A_{\mathrm{cl}},\Sigma_1)$--by \lyaptrace property--which is well-defined and unique as $A_{\mathrm{cl}}$ is a stability matrix.
As $\diff f_K(E) = E f$, the expression for $\grad{f} \in \fX(\stableK)$ then follows by its definition.
Next, as the Hessian operator is self-adjoint (see \Cref{appendix}\!), in order to obtain $\hess{f}$ we can compute \cref{eq:hess-compute} for any $U, W \in \fX(\stableK)$.
As $\hess{f}[U]|_K$ only depends on the value of $U$ at $K$, it suffices to obtain $\hess{f}_K[U_K]$ at each $K \in \stableK$ with $U_K = E$ for arbitrary $E \in T_K \stableK$. 
To do so, we compute $\tensor{\hess{f}_K[E]}{F}{Y}$ for an arbitrary vector $F \in T_K \stableK$ by extending $F$ to the vector field $W$ along the curve $\gamma\colon t \mapsto K+t E$ with constant coordinates with respect to the global coordinate frame $(\partial_{(i,j)})$. As $\tensor{\hess{f}[U]}{W}{Y}|_K$ only depends on the value of $W_K=F$ and $U_K =E$, how  these vector fields have been extended is arbitrary. By properties of the Riemannian connection and the fact that $W$ can be extended with constant coordinates, here we can compute $\nabla_U W |_K$ in the global coordinate frame $(\partial_{i,j})$ and obtain,
\begin{equation}\label{eq:con_const_vec}
    \nabla_U W |_K = [E]^{k,\ell} \; [F]^{p,q} \; \Gamma^{i,j}_{(k,\ell)(p,q)}(K) \;\partial_{i,j},
\end{equation}
where $\Gamma^{i,j}_{(k,\ell)(p,q)}(K)$ denotes the Christoffel symbols associated with the Riemannian metric $\metric$ at the point $K \in \stableK$.
Therefore, from \cref{eq:hess-compute} we have that
\begin{equation}\label{eq:hess-proof}
    \tensor{\hess{f}[U]}{W}{Y}|_K = E r - \tensor{\grad{f}_K}{\nabla_U W |_K}{Y_K},
\end{equation}
where $r \coloneqq \tensor{\grad{f}}{W}{Y} \in C^\infty(\stableK)$. By the expression obtained for $\grad{f}$ and that $W$ has constant coordinates, the mapping $K \mapsto r(K)$ can be decomposed as:
\begin{multline*}
    K \xrightarrow{\mathrm{Id}\times\mathrm{Id}} (K,K) \xrightarrow{\mathrm{Id}\times (\lyap \circ \Phi)} (K,P_K) \xrightarrow{\Xi}\\
    (A_{\mathrm{cl}}^\intercal, (\grad{f}_K)^\intercal F + F^\intercal \grad{f}_K ) \xrightarrow{\Psi \circ \lyap} r(K),
\end{multline*}
where we used the \lyaptrace property and invariance of trace under transpose to justify the last mapping. Also note that $\Phi$ and $\Psi$ are defined in \cref{eq:fcompos} and $\Xi\colon\Kmatrices \times \Amatrices \mapsto \Amatrices \times \Amatrices$ is defined as above. Therefore, under the usual identification of tangent bundle, for any $(E,G) \in T_{(K,P_K)} (\Kmatrices \times \Amatrices)$ we can compute,
\begin{gather}
    \diff \Xi_{(K,P_K)}[E,G] = \Big(E^\intercal B^\intercal, \; E^\intercal (R + B^\intercal P_K B)F
    + A_{\mathrm{cl}}^\intercal G B F \nonumber\\
    + F^\intercal (R + B^\intercal P_K B) E + F^\intercal B^\intercal G^\intercal A_{\mathrm{cl}} \Big). \label{eq:dXi}
\end{gather}
Therefore, by the chain rule, for any $E \in T_K \stableK$ we have
\begin{gather*}
    \diff r_K(E)= \Psi \circ \diff \lyap \circ \diff \Xi \circ \left(E, \diff(\lyap \circ \Phi)_K (E)\right),
\end{gather*}
where the base points of differentials are understood and dropped for brevity. But then, by \cref{eq:dP_K} and \cref{eq:dXi}, we obtain,
\begin{gather*}
    \diff \Xi \left[E, \diff(\lyap \circ \Phi)_K (E)\right]= \Big( E^\intercal B^\intercal, E^\intercal(R+B^\intercal P_K B)F\\
    + A_{\mathrm{cl}}^\intercal (S_K[E]) B F + F^\intercal (R+B^\intercal P_K B) E + F^\intercal B^\intercal (S_K[E]) A_{\mathrm{cl}} \Big),
\end{gather*}
where $S_K[E]$ is defined in the premise.
Therefore,
\begin{multline*}
    \diff r_K(E) = \Psi \circ \lyap\Big( A_{\mathrm{cl}}^\intercal, \; E^\intercal(R+B^\intercal P_K B)F \\
    + A_{\mathrm{cl}}^\intercal (S_K[E]) B F + A_{\mathrm{cl}}^\intercal (S_K[F]) B E +  E^\intercal B^\intercal (S_K[F]) A_{\mathrm{cl}}\Big),
\end{multline*}
that using the \lyaptrace property can be simplified as, 
\begin{gather*}
\begin{aligned}
    2 \diff r_K(E) =& \tr{E^\intercal B^\intercal (S_K[F]) A_{\mathrm{cl}} Y_K + A_{\mathrm{cl}}^\intercal (S_K[F]) B E Y_K} \\
    &+  \tr{(E^\intercal(R+B^\intercal P_K B) + A_{\mathrm{cl}}^\intercal (S_K[E]) B ) F Y_K}\\
    &+  \tr{F^\intercal((R+B^\intercal P_K B)E + B^\intercal (S_K[E]) A_{\mathrm{cl}} ) Y_K},
\end{aligned}
\end{gather*}
where $Y_K = \lyap(A_{\mathrm{cl}},\Sigma_1)$. Noting that $Y_K$, $P_K$, $S_K[E]$ and $S_K[F]$ are all symmetric, using the cyclic permutation property of trace, we now obtain,
\begin{multline}\label{eq:drE}
    \diff r_K(E) = \tensor{(R+B^\intercal P_K B)E + B^\intercal (S_K[E]) A_{\mathrm{cl}} )}{F}{Y_K}\\
    +\tensor{ B^\intercal (S_K[F]) A_{\mathrm{cl}}}{E}{Y_K}.
\end{multline}
Then, the expression for $\hess{f}$ follows by substituting \cref{eq:drE} and \cref{eq:con_const_vec} in \cref{eq:hess-proof}. Finally, the expression of $\euchess{f}$ can be obtained similarly by threading through the definitions.
\end{proof}

\begin{proof}[Proof of \Cref{cor:lqrcost-constrained}]
Smoothness of $h$ and the expression of its gradient follows immediately by  \Cref{prop:extrinsic} and \Cref{prop:lqrcost}.
In order to compute $\hess{h}_K$, we can combine its extrinsic representation as obtained in \Cref{prop:extrinsic} with \cref{eq:hess-proof}, and use the definition of Weingarten map to obtain, \begin{gather*}
\begin{aligned}
    &\tensor{\hess{h}_K[E]}{F}{Y_K}\\
    &=
    \tensor{\tproj (\hess{f}[U]\big|_{\footnotesize\substableK})}{W}{Y}\big|_K + \tensor{\wein_{\nproj (\grad{f}|_{\footnotesize\substableK})}(U)}{W}{Y}\big|_K\\
    &= E r - \tensor{\grad{f}_K}{\nabla_U W |_K}{Y_K}+ \tensor{\nproj \grad{f}_K}{\nproj \nabla_U W|_K}{Y_K}\\
    &= E r - \tensor{\tproj \grad{f}_K}{\nabla_U W |_K}{Y_K},
\end{aligned}
\end{gather*}%
for any $E,F \in T_K\substableK \subset \stableK$, which are extended to vector fields on a neighborhood in $\stableK$ with constant coordinates with respect to the global coordinate frame. The claimed expression of $\hess{h}_K$ then follows by substituting \cref{eq:con_const_vec} and \cref{eq:drE} into the last expression.
\end{proof}
}{}

\begin{proof}[Proof of \ac{slqr} Manifold in \S\ref{subsec:slqr}]
Let the tuple $(x^{(i,j)}|_{(i,j) \in [m]\times[n]})$ denote the component functions of the global smooth chart $(\Kmatrices,\mathrm{vec})$, and define $\Phi\colon\Kmatrices \mapsto \bR^{mn-|D|}$ with
\(\textstyle \Phi(K) = \sum_{(i,j) \notin D} [K]_{i,j} x^{(i,j)}.\)
Then, it is easy to see that $\Phi$ is a smooth submersion, and so is $\Phi|_{\stableK}$ as $\stableK$ is an open submanifold of $\Kmatrices$. Therefore, as $\substableK = \stableK \cap \constraint_D = (\Phi|_{\stableK})^{-1}(0)$, by Submersion Level Set Theorem, we conclude that $\substableK$ is a properly embedded submanifold of dimension $|D|$.  

Furthermore, at any point $K \in \substableK$ and for any tangent vector $E \in T_K \stableK$, we can compute the tangential projection $\tproj\colon T_K\stableK \mapsto T_K\substableK$ as,
\vspace{-0.2cm}
\begin{equation}\label{eq:proj-opt}
     \textstyle \tproj E = \argmin_{\widetilde{E}\in T_K \substableK} \tensor{E-\widetilde{E}}{E-\widetilde{E}}{Y_K}.
\end{equation}
Since $\constraint_D$ is a linear subspace of $\Kmatrices$, we can identify $T_K\substableK$ with $\constraint_D$ itself (a dimension argument). Then, it follows that the unique solution $\widetilde{E}^*$ to the minimization problem above (with linear constraint and strongly convex cost function, as $Y_K\succ 0$ in \Cref{prop:Riemannian-metric}), satisfies $E- \widetilde{E}^* \perp \constraint_D$ with respect to the Riemannian metric at $K$; or equivalently \cref{eq:tproj-linear}.
\end{proof}

\begin{proof}[Proof of  \ac{olqr} Manifold in \S\ref{subsec:olqr}]
Note that $\constraint_C$ is a linear subspace of $\Kmatrices$ whose dimension depends on the rank of $C$. %
Now, define $\Psi\colon\Kmatrices \mapsto \Kmatrices$ as, 
\(\Psi(K) = K(I_n - C^\dagger C),\)
where $\dagger$ denotes the Moore-Penrose inverse. Note that $\Psi$ is a linear map that is surjective onto its range, denoted by $\mathcal{R}$, which is an $m(n-d)$ dimensional linear subspace of $\Kmatrices$. Therefore, $\Phi\colon\stableK \mapsto \mathcal{R}$ defined as the restriction of $\Psi$ both in domain and codomain, is a smooth submersion. Finally, as $C C^\dagger C = C$, we can observe that $\mathrm{Ker}(\Psi) = \constraint_C$.
Therefore, $\substableK = \constraint_C \cap \stableK = \Phi^{-1}(0)$ and thus, by Submersion Level Set Theorem, $\substableK$ is a properly embedded submanifold of $\stableK$ with dimension $md$. We also conclude that at each $K \in \substableK$, we can canonically identify the tangent space at $K$ as,
\(T_K\substableK = \mathrm{Ker}(\diff \Phi_K) \cong \constraint_C.\)

Next, at any $K \in \substableK$ and for any $E \in T_K \stableK$, the tangential projection of $E$, denoted by $\widetilde{E}^* = \tproj E$, is the unique solution of a minimization problem similar to \cref{eq:proj-opt}. Moreover, under the above identification, $E- \widetilde{E}^* \perp \constraint_C$ (with respect to the Riemannian metric) must be satisfied, or equivalently,
\(\tr{C^\intercal L^\intercal (E-\widetilde{E}^*)Y_K} = 0, \quad \forall L \in \Lmatrices.\)
Here, $Y_K = \lyap(A_{\mathrm{cl}},\Sigma_1)$ is positive definite and since $C$ is assumed to be full-rank, $CY_K C^\top$ is positive definite.
Hence, we conclude that
$\tproj E = L^* C$ with $L^* \in \Lmatrices$ being the unique solution of \cref{eq:tangent_olqr}.

Finally, at each point $K \in \substableK$, we denote the global coordinate functions of $\Lmatrices$ (with slight abuse of notation) by the tuple $(x^{i,j})$ for $(i,j) \in D:= [m]\times[d]$ and its corresponding global coordinate frame by $(\partial_{(i,j)})$. Recall that $C$ has full-rank and consider the identification of $T_K\substableK \cong \constraint_C$ described above. Then, 
the (constant) global vector fields $(\widetilde{\partial}_{(i,j)} = \partial_{(i,j)} C)$ with $(i,j) \in D$,
form a global smooth frame for $T\substableK$ as they are linearly independent on $\substableK$.
Therefore, the coordinates of the covariant Hessian $h_{;(k,\ell)(p,q)}(K)$ with respect to this frame can be computed by substituting $E = \partial_{(k,\ell)} C$ and $F = \partial_{(p,q)} C$ in \Cref{cor:lqrcost-constrained} for each $(k,\ell),(p,q) \in D$--similar to the \ac{slqr} case. It is worth noting that the sparsity pattern in $E$, $F$ and Christoffel symbols can simplify the computation; we will not delve further into this issue due to space limitations.
\end{proof}

\balance
\bibliographystyle{ieeetr}
\bibliography{citations}
\begin{IEEEbiography}
[{\includegraphics[width=1in,height=1.3in,clip,keepaspectratio]{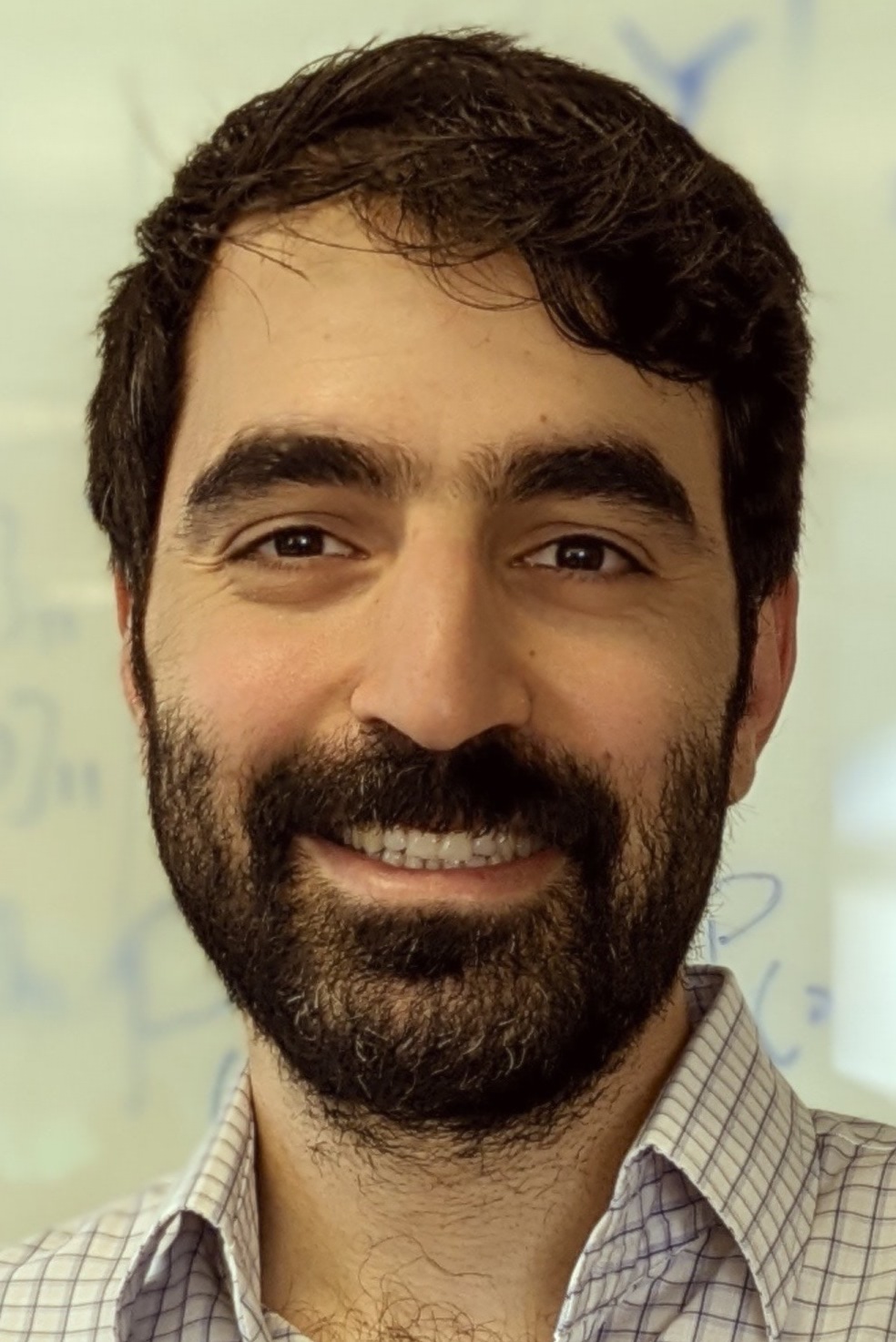}}]
{Shahriar Talebi} (Student Member, IEEE) 
received the Ph.D. degree in aeronautics and astronautics, specializing in control theory, and the M.Sc. degree in Mathematics, focusing on differential geometry, both from the University of Washington, Seattle, WA, USA, in 2023. He also received the B.Sc. degree from Sharif University of Technology, Tehran, Iran, in 2014, and the M.Sc. degree from the University of Central Florida, Orlando, FL, in 2017, both in electrical engineering.

His research interests include control theory, differential geometry, learning for control, networked dynamical systems, and game theory.

Dr. Talebi was the recipient of the 2022 Excellence in Teaching Award at UW. He is also a recipient of a number of scholarships, including the William E. Boeing Endowed Fellowship, Paul A. Carlstedt Endowment, and Latvian Arctic Pilot-A. Vagners Memorial Scholarship (UW, 2018–2019), as well as the Frank Hubbard Engineering Scholarship (UCF, 2017).

    

\end{IEEEbiography}
\vfill

\begin{IEEEbiography}
[{\includegraphics[width=1in,height=1.25in,clip,keepaspectratio]{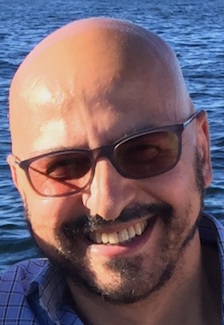}}]
    {Mehran Mesbahi} (Fellow, IEEE) received his Ph.D. degree in electrical engineering-systems from University of Southern California, Los Angeles, CA, USA, in 1996.
    
    M. Mesbahi was a member of the Guidance, Navigation, and Analysis group at JPL, Pasadena, CA, USA, from 1996–2000 and an Assistant Professor of Aerospace Engineering and Mechanics at the University of Minnesota from 2000–2002.
    He is currently a Professor of Aeronautics and Astronautics, Adjunct Professor of Electrical and Computer Engineering and Mathematics at the University of Washington, and the Executive Director of the Joint Center for Aerospace Technology Innovation. He was the recipient of NSF CAREER Award, NASA Space Act Award, UW Distinguished Teaching Award, UW College of Engineering Innovator Award for Teaching, and is a member of Washington State Academy of Sciences.
    
    His research interests include distributed and networked aerospace systems, autonomy, and system and control theory.
\end{IEEEbiography}

\end{document}